\DeclareMathOperator{\Hom}{Hom}
\DeclareMathOperator{\Ext}{Ext}
\DeclareMathOperator{\id}{id}
\DeclareMathOperator{\Mod}{mod}
\DeclareMathOperator{\Ker}{Ker}
\DeclareMathOperator{\Ima}{Im}
\DeclareMathOperator{\Coker}{Coker}
\DeclareMathOperator{\thick}{thick} 
\DeclareMathOperator{\per}{per} 
\DeclareMathOperator{\cone}{cone}
\DeclareMathOperator{\Cocone}{cocone}
\DeclareMathOperator{\add}{add}
\DeclareMathOperator{\rad}{rad}
\DeclareMathOperator{\Top}{top}
\DeclareMathOperator{\soc}{soc}
\DeclareMathOperator{\Irr}{Irr}
\newcommand{\Dual}{\mathbb{D}}
\newcommand{\proj}{\Cat{P}}
\newcommand{\inj}{\Cat{I}}
\newcommand{\Dproj}{\Cat{D}\mathrm{Proj}(A)}
\newcommand{\Dinj}{\Cat{D}\mathrm{Inj}(A)}
\newcommand{\Cat}[1]{\mathcal{#1}}
\newcommand{\D}{\Cat{D}}
\newcommand{\T}{\Cat{T}}
\newcommand{\C}{\Cat{C}}
\newcommand{\Hc}[1]{\Cat{H}^{{#1}}_A}
\newcommand{\Double}[1]{\mathbb{#1}}
\newcommand{\EE}{\Double{E}}
\newcommand{\DD}{\Double{D}}
\newcommand{\fd}{\D^{\mathrm{fd}}}
\newcommand{\definef}[1]{\emph{{#1}}}
\newcommand{\define}[1]{{#1}}
\newcommand{\trunc}{t}
\newcommand{\Proj}{\mathcal{P}}
\newcommand{\rk}{\mathrm{rk}}
\newcommand\Ar[3]{\ar[from={#1}, to={#2}, #3]}
\newcommand\Arop[3]{\ar[from={#2}, to={#1}, #3]}
\newcommand\tauAr[3]{\ar[from={#1}, to={#2}, #3,color=darkgray, dash pattern=on 3pt off 4pt]}
\newcommand\red[1]{\textcolor{red}{#1}}
\newcommand{\blue}[1]{\textcolor{blue}{#1}}
\newenvironment{xsmallmatrix}
  {\renewcommand\thickspace{\kern.0em}\smallmatrix}
  {\endsmallmatrix}
\newcommand{\leaveout}[1]{}
\theoremstyle{plain}  
\newtheorem{theorem}{Theorem}[section]
\newtheorem{prop}[theorem]{Proposition}
\newtheorem{lemma}[theorem]{Lemma}
\newtheorem{coro}[theorem]{Corollary} 
\theoremstyle{definition}
\newtheorem{definition}[theorem]{Definition}
\newtheorem{rema}[theorem]{Remark}
\newtheorem{ex}[theorem]{Example}
\title[On the Auslander--Reiten Theory for Extended Hearts]{On the Auslander--Reiten Theory for Extended Hearts of Proper Connective DG Algebras}
\author[N.~Mochizuki]{Nao Mochizuki}%
\address{%
	Graduate School of Mathematics, %
    Nagoya university, 
    Furocho, %
    Chikusaku, %
    Nagoya, %
    Japan
}%
\email{mochizuki.nao.n8@s.mail.nagoya-u.ac.jp}%
\author[M.~Plogmann]{Marvin Plogmann}%
\address{%
	Mathematisches Institut, %
	Universität zu Köln, %
	Weyertal 86-90, %
	50931 Köln, %
	Germany%
}%
\email{plogmann@math.uni-koeln.de}%
\urladdr{https://sites.google.com/view/marvinplogmann}%
\keywords{proper connective dg algebras; extriangulated categories; Auslander--Reiten Theory; t-structures; co-t-structures}%
\subjclass[2020]{Primary: 18G80. Secondary: 16E45, 16G70}%
\begin{document}

    \begin{abstract}
		We prove that, for a proper connective dg algebra $A$ with cohomology concentrated in degrees between $1-d$ and $0$, the extended heart $\fd(A)^{(-d,0]}\subseteq \fd(A)$ is an extriangulated category with almost-split conflations. We also prove a version of the 1st Brauer--Thrall Conjecture in this context.
    \end{abstract}
	
    \maketitle
	
    \section{Introduction}
    Let $k$ be a field and $A$ a connective dg algebra over $k$, that is $H^{*}(A)$ is concentrated in non-positive cohomological degrees. We denote by $\D(A)$ the (unbounded) derived category of (right) dg $A$-modules, and by $\fd(A)\subseteq \D(A)$ the full subcategory of dg $A$-modules $M$ whose cohomology $H^{*}(M)$ is finite dimensional. In the following, we fix $A$ to be \definef{proper}, \emph{i.e.} $H^{*}(A)$ is finite dimensional. We will use the notation $\Hom_A:=\Hom_{\D(A)}$ in what follows.
	
    The standard $t$-structure $(\D(A)^{\leq 0},\D(A)^{\geq 0})$ on $\D(A)$ is given by the full subcategories
    \[\D(A)^{\leq 0}\coloneqq \{M\colon H^{i}(M)=0 \text{ for all }i>0\}, \quad \D(A)^{\geq 0}\coloneqq \{M\colon H^{i}(M)=0 \text{ for all }i<0\};\]
    it restricts to a $t$-structure on $\fd(A)$ whose heart $\mathcal{H}$ is the abelian category of finite-dimensional modules over the finite-dimensional algebra $H^0(A)$, denoted $\Mod(H^0(A))$. For $n\in\mathbb{Z}$ we define $\fd(A)^{\leq n}\coloneqq \fd(A)^{\leq 0}[-n]$, $\fd(A)^{\geq n}\coloneqq \fd(A)^{\geq 0}[-n]$ and $\fd(A)^{< n}\coloneqq \fd(A)^{\leq n-1}$, $\fd(A)^{> n}\coloneqq \fd(A)^{\geq n+1}$. The main object of study in this article is the \definef{extended heart} 
    \[\define{\Hc{d}}\coloneqq \fd(A)^{\leq 0}\cap \fd(A)^{>-d}\] 
    for an integer $d\geq 1$. As both $\fd(A)^{\leq 0}$ and $\fd(A)^{>-d}$ are extension closed, $\Hc{d}\subseteq \fd(A)$ is extension closed and hence all of them are extriangulated by \cite[Rem.~2.18]{NP19}. Recall, for an extension closed subcategory $\mathcal{S}$ of a triangulated category $\T$, the biadditive functor $\EE:{\mathcal{S}}^{op}\times\mathcal{S}\to \Mod (k)$ is given by $\EE(-,?)=\Hom_{\T}(-,?[1])$ and the class of \definef{conflations} in $\mathcal{S}$ consists of the triangles 
    \[L\overset{i}{\longrightarrow}M \overset{p}{\longrightarrow}N{\longrightarrow}L[1]\]
    in $\T$ such that $L,M,N\in\mathcal{S}$. In this case $i$ is called an \definef{inflation} and $p$ is called a \definef{deflation}.
    
    \begin{rema}\label{RemarkExHeart}
		It follows from the definitions, that
		\begin{enumerate}
			\item $f\in\Hc{d}(X,Y)$ is an inflation if and only if $H^{-d+1}(f)$ is a monomorphism in $\mathcal{H}$,
			\item $f\in\Hc{d}(X,Y)$ is a deflation if and only if $H^{0}(f)$ is an epimorphism in $\mathcal{H}$.
		\end{enumerate}
		For a proof see for example \cite[Thm.~4.50]{M25}. Moreover, by \cite[Thm.~4.44]{M25} the canonical dg enhancement of $\Hc{d}$ is an abelian $(d,1)$-category in the sense of \cite[Def.~4.12]{M25}; we will, however, make no use of this latter fact.
    \end{rema}
    
    Recall that an object $P$ of an extriangulated category $(\C,\EE,\mathfrak{s})$ is called \definef{projective} if $\EE(P,X)=0$ for all $X\in\C$. An object $I$ of $\C$ is called \definef{injective} if $\EE(X,I)=0$ for all $X\in\C$. Denote by $\define{\proj(\C)}$ and $\define{\inj(\C)}$, the full subcategories of projective and injective objects in $\C$ respectively. We say that $\C$ has \definef{enough projectives}, if for all $X\in\C$ there exists $P\in\proj(\C)$ and a deflation $p\colon P\to X$. Dually, $\C$ has \definef{enough injectives}, if for all $X\in\C$ there exists $I\in\inj(\C)$ and an inflation $\iota\colon X\to I$. In this case we denote by $\underline{\C}\coloneqq \C/\proj(\C)$ and $\overline{\C}\coloneqq \C/\inj(\C)$ the stable and costable category respectively (see \cite[Def.~1.21,Rem.~1.22]{INP24}).
	
    Since $A$ is proper, there exists an integer $d\geq 1$ such that $A\in\Hc{d}$; and we fix such $d$ for the rest of the note. In this case it follows from \cite[Thm.~3.15]{Bon24}, that $\Hc{d}$ has enough projectives given by $\proj(\Hc{d})=\add(A)$. By dualising the arguments in \cite{Bon24} we will see in \cref{EnoughPI} that $\Hc{d}$ has enough injectives, which are given by $\inj(\Hc{d})=\add(\DD A[d-1])$, where  
    \[\define{\DD(-)}\coloneqq\mathbb{R}\Hom_k(-,k)\colon \fd(A^{op})^{op}\to \fd(A)\]  
    denotes the \definef{$k$-linear duality functor}. 
	
    In the case $A\simeq H^0(A)=:\Lambda$, the extended heart $\mathcal{H}_{\Lambda}^d$ was studied in \cite{G24} and \cite{Z25}. In \cite[Th.~4.1]{G24}, a bijection between torsion pairs in $\mathcal{H}_{\Lambda}^d$, cotorsion pairs in $\mathrm{K}^{[-d,0]}(\mathrm{proj}(\Lambda))$ and $(d+1)$-term silting complexes is proven. In \cite[Th.~4.7]{Z25} these bijections are extended to include basic $\tau$-tilting pairs in $\mathcal{H}_{\Lambda}^d$. In particular, it is proven in \cite[Th.~3.12]{Z25} that $\mathcal{H}_{\Lambda}^d$ has an Auslander--Reiten--Serre duality and hence almost-split conflations in the sense of \cite[Def.~2.7]{INP24}, which allow to give a description of (part of) the Auslander-Reiten quiver of $\mathcal{H}_{\Lambda}^d$. We prove the following generalization of \cite[Thm.~3.12]{Z25} in the setting of proper connective dg algebras, which was known to be true for $d$-self-injective dg algebras by \cite[Thm.~3.1]{j20}.
	
    \begin{theorem}\label{MainTheorem}
		Let $A$ be a proper, connective dg algebra and $d>0$ such that $A\in \Hc{d}$. Then, there exists an \definef{Auslander--Reiten--Serre duality} in the sense of \cite[Def.~3.4.]{INP24}, that is a pair $(\tau,\eta)$ of a $k$-linear equivalence $\tau\colon\underline{\Hc{d}}\to \overline{\Hc{d}}$ and a binatural isomorphism 
		\[\eta_{X,Y}\colon \underline{\Hc{d}}(X,Y) \cong \DD\Hom_{\fd(A)}(Y,\tau X[1])  \quad \text{for any }X,Y\in{\Hc{d}}.\]
    \end{theorem}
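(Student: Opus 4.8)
The plan is to build the translation $\tau$ by a Nakayama-type recipe carried out inside the ambient triangulated category $\fd(A)$, and to deduce $\eta$ from the relative Serre duality available there.

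\textbf{Relative Serre duality and the Nakayama equivalence.} Write $\nu:=-\otimes^{\mathbb L}_A\DD A\colon\per(A)\to\fd(A)$ for the Nakayama functor. The basic external input is the \emph{relative Serre duality} for the proper dg algebra $A$: a binatural isomorphism
\[\DD\Hom_A(P,M)\ \cong\ \Hom_A(M,\nu P)\qquad(P\in\per(A),\ M\in\fd(A)),\]
obtained from $\mathbb R\Hom_A(P,M)\simeq M\otimes^{\mathbb L}_A\mathbb R\Hom_A(P,A)$ together with $k$-linear duality. Since $A\in\Hc d$, the cohomology of $A$, hence of $\DD A$, has amplitude $d$; thus $\DD A\in\fd(A)^{[0,d-1]}$ and $\nu$ restricts to an equivalence $\add(A)\xrightarrow{\ \sim\ }\add(\DD A)$ with $\nu A=\DD A$. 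Consequently $\overline\nu:=\nu(-)[d-1]$ is an equivalence $\proj(\Hc d)\xrightarrow{\ \sim\ }\inj(\Hc d)$ (using \cref{EnoughPI}).

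\textbf{Construction of $\tau$.} Let $X\in\Hc d$. As $\proj(\Hc d)=\add(A)$ and $\Hc d$ has enough projectives, pick a minimal projective presentation, i.e.\ conflations $\Omega X\rightarrowtail P_0\twoheadrightarrow X$ and $\Omega^2X\rightarrowtail P_1\twoheadrightarrow\Omega X$ with $P_0,P_1\in\add(A)$; here $\Omega X\in\Hc d$ by \cref{RemarkExHeart} and the cohomology long exact sequence, and by Krull--Schmidt the presentation is unique up to isomorphism. Let $f\colon P_1\to P_0$ be the composite and set $Z:=\cone(f)\in\per(A)$, so there is an octahedral triangle $\Omega^2X[1]\to Z\to X\to\Omega^2X[2]$ in $\fd(A)$. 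A short amplitude count gives $\nu Z\in\fd(A)^{[-1,d-1]}$, and one defines $\tau X\in\Hc d$ by
\[\tau X[1]\ :=\ \trunc^{\leq-1}\bigl(\nu Z[d-1]\bigr)\ \in\ \fd(A)^{[-d,-1]},\]
where $\trunc^{\leq-1}$ is the standard truncation of $\fd(A)$; for $d=1$ this recovers $\tau X=\DD\operatorname{Tr}X$, and for $X\in\add(A)$ one has $\Omega X=0$, hence $\tau X\in\add(\DD A[d-1])=\inj(\Hc d)$. Lifting a morphism of $\Hc d$ to a morphism of minimal presentations (unique up to homotopy) makes $\tau$ functorial and annihilates morphisms factoring through $\add(A)$, so $\tau$ descends to a $k$-linear functor $\underline{\Hc d}\to\overline{\Hc d}$.

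\textbf{The isomorphism $\eta$.} Fix $X,Y\in\Hc d$. The space $\Hom_A(X,A)$ is finite dimensional, so there is a morphism $g\colon X\to A^m$ into an object of $\add(A)$ through which every morphism $X\to A$ factors (take a basis of $\Hom_A(X,A)$); a morphism $X\to Y$ factors through $\add(A)$ if and only if it factors through $g$. Applying $\Hom_A(-,Y)$ to the triangle $X\xrightarrow{g}A^m\to\cone(g)\to X[1]$ and using $\EE(A^m,Y)=0$ (projectivity of $A^m$) identifies
\[\underline{\Hc d}(X,Y)\ \cong\ \Hom_A\!\bigl(\cone(g)[-1],\,Y\bigr).\]
The heart of the argument is to compare $\cone(g)[-1]$ with the perfect complex $Z$: splicing the deflation $P_0\twoheadrightarrow X$ with $g$ into an octahedron, rewriting the syzygy terms that occur by means of the conflations of the minimal presentation, and at each stage discarding the summands orthogonal to $\Hc d\subseteq\fd(A)^{[-d+1,0]}$ — using the $t$-structure orthogonality of $\fd(A)$ and $\Hom_A(A,Y[j])=H^j(Y)=0$ for $j\geq1$ — one transforms the right-hand side into $\DD\Hom_A(Y,\nu Z[d-1])$ by applying relative Serre duality to the \emph{perfect} complex $Z$, and the same orthogonality replaces $\nu Z[d-1]$ by $\tau X[1]=\trunc^{\leq-1}(\nu Z[d-1])$: the cohomology of $\nu Z[d-1]$ that is truncated away corresponds precisely to the morphisms divided out when passing to $\underline{\Hc d}(X,Y)$. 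The resulting isomorphism $\eta_{X,Y}\colon\underline{\Hc d}(X,Y)\cong\DD\Hom_A(Y,\tau X[1])$ is natural in $X$ by functoriality of the construction and in $Y$ by construction.

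\textbf{$\tau$ is an equivalence.} Running the dual argument — enough injectives $\inj(\Hc d)=\add(\DD A[d-1])$, minimal injective copresentations $X\rightarrowtail I^0\twoheadrightarrow\Sigma X$ with $\Sigma X\in\Hc d$, the quasi-inverse $\nu^{-1}$ of $\nu$ on $\add(\DD A)$, and the co-Serre duality $\DD\Hom_A(M,I)\cong\Hom_A(\nu^{-1}I,M)$ for $I\in\add(\DD A)$, $M\in\fd(A)$ — produces a $k$-linear functor $\tau^{-}\colon\overline{\Hc d}\to\underline{\Hc d}$ together with a binatural isomorphism $\overline{\Hc d}(X,Y)\cong\DD\Hom_A(\tau^{-}Y,X[1])$. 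Combining this with $\eta$ via the Yoneda lemma exhibits $\tau$ and $\tau^{-}$ as mutually quasi-inverse (in particular $\underline{\Hc d}(X,\tau^{-}W)\cong\overline{\Hc d}(\tau X,W)$ naturally, and the resulting unit and counit are isomorphisms); hence $\tau$ is a $k$-linear equivalence and $(\tau,\eta)$ is an Auslander--Reiten--Serre duality in the sense of \cite[Def.~3.4]{INP24}. The main obstacle is the middle step: showing that dividing out the morphisms through $\add(A)$ corresponds exactly to the truncation passing from $\nu Z[d-1]$ to $\tau X[1]$, which is precisely where the hypothesis $A\in\Hc d$ — equivalently, that $H^{*}(A)$ has amplitude at most $d$ — is used.
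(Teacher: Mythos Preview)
Your construction of $\tau$ has a genuine gap for $d\geq 2$: a two--term projective presentation is not long enough. You set $Z=\cone(P_1\to P_0)\in\per(A)^{[-1,0]}$ and define $\tau X[1]:=\trunc^{\leq-1}(\nu Z[d-1])$. Test this on $X\in\add(A)$: then $\Omega X=0$, $P_1=0$, $Z=X$, and $\nu Z[d-1]=\nu X[d-1]\in\add(\DD A[d-1])=\inj(\Hc{d})$. But $\DD A[d-1]$ has $H^0(\DD A[d-1])=\DD H^{1-d}(A)$, which is nonzero whenever $A$ genuinely lives in amplitude $d$; hence $\trunc^{\leq-1}(\nu X[d-1])$ is a proper truncation of an indecomposable injective and is \emph{not} in $\add(\DD A[d-1])$. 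Thus your $\tau X$ is not injective, $\id_{\tau X}$ does not become zero in $\overline{\Hc{d}}$, and $\tau$ does not descend to a functor $\underline{\Hc{d}}\to\overline{\Hc{d}}$. One can also see the failure on the duality side: for $X=A$ and $Y=\DD H^0(A)[1]\in\Hc{2}$ (with $d=2$) one has $\underline{\Hc{2}}(A,Y)=0$, yet $\Hom_A(Y,\tau A[1])=\Hom_A(\DD H^0(A)[1],\DD H^0(A)[1])\neq 0$, so your $\eta$ cannot exist.

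The fix is exactly what the paper does: replace the two--term $Z$ by a $(d{+}1)$--term perfect resolution $P_X\in\per(A)^{[-d,0]}$ with $\trunc^{>-d}P_X\cong X$ (\cref{d-res-lemma}), and set $\tau X[1]=\trunc^{\leq-1}(\nu P_X)$. The point is that the length of the resolution must match the width of $\Hc{d}$: only then does the adjunction give $\Hom_A(X,Y)\cong\Hom_A(P_X,Y)$ for all $Y\in\Hc{d}$ (\cref{TruncIso}), after which relative Serre duality applies cleanly. Your ``heart of the argument'' paragraph---splicing $\cone(g)$ with $Z$ via octahedra and discarding orthogonal pieces---is precisely where the two--term $Z$ fails to carry enough information; the handwaving there is not merely incomplete, it is covering a step that is false as stated. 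Once you use the correct $(d{+}1)$--term $P_X$, the comparison of the quotient by $\add(A)$ with the truncation passing from $\nu P_X$ to $\tau X[1]$ goes through via the short diagram chase in the paper's proof of \cref{MainTheorem}, and the separate construction of $\tau^{-}$ to verify that $\tau$ is an equivalence becomes unnecessary: $\tau$ is visibly a composite of equivalences (\cref{d-equiv-stable}, \cref{d-equiv-costable}, and $[-d]\circ\nu_A$).
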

	
    \begin{coro}\label{AlmostSplitCoro}
		Let $A$ and $d$ as above. Then, the extended heart $\Hc{d}$ is an extriangulated category with almost-split conflations.
    \end{coro}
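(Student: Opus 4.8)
The strategy is to deduce \cref{AlmostSplitCoro} formally from \cref{MainTheorem}, using the general principle — made precise for extriangulated categories in \cite{INP24} — that on a Krull--Schmidt extriangulated category the existence of an Auslander--Reiten--Serre duality is equivalent to the existence of almost-split conflations. Since \cref{MainTheorem} already hands us the duality, only a verification of the standing hypotheses of that correspondence remains.

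First I would record that $\Hc{d}$ is a Krull--Schmidt category. As $A$ is proper and connective, $\fd(A)$ is $\Hom$-finite over $k$ and, being closed under direct summands inside the idempotent-complete category $\D(A)$, is itself idempotent complete. The extended heart $\Hc{d}=\fd(A)^{\leq 0}\cap\fd(A)^{>-d}$ is a full, additive, extension-closed subcategory of $\fd(A)$ which is moreover closed under direct summands, being cut out by cohomology-vanishing conditions; hence it inherits $\Hom$-finiteness and idempotent completeness, and an additive $k$-category with these two properties is Krull--Schmidt (endomorphism rings are finite-dimensional, hence semiperfect). Next, \cref{MainTheorem} provides an Auslander--Reiten--Serre duality $(\tau,\eta)$ on the extriangulated category $\Hc{d}$ in the sense of \cite[Def.~3.4]{INP24}. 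Finally, applying the correspondence of \cite{INP24} to the Krull--Schmidt extriangulated category $\Hc{d}$ — which in addition has enough projectives $\add(A)$ and enough injectives $\add(\DD A[d-1])$, cf.\ the paragraph preceding \cref{EnoughPI}, should the precise formulation require it — the Auslander--Reiten--Serre duality yields almost-split conflations in the sense of \cite[Def.~2.7]{INP24}. Since $\Hc{d}$ is extriangulated by \cite[Rem.~2.18]{NP19}, this is exactly the statement of \cref{AlmostSplitCoro}.

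All of the mathematical substance sits in \cref{MainTheorem}; in deducing \cref{AlmostSplitCoro} the only things one must check are that $\fd(A)$ is genuinely $\Hom$-finite and Krull--Schmidt for a proper connective $A$, and that the running hypotheses of the relevant statement in \cite{INP24} hold for $\Hc{d}$ — and these amount precisely to the Krull--Schmidt property and the availability of enough projectives and injectives recalled above. I therefore do not expect any further obstacle at this step; the genuine difficulty is entirely contained in the construction of the binatural isomorphism $\eta$ and the equivalence $\tau$ in \cref{MainTheorem}.
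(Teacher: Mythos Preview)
Your proposal is correct and follows essentially the same route as the paper: verify that $\Hc{d}$ is a $k$-linear $\Ext$-finite Krull--Schmidt extriangulated category (via $\Hom$-finiteness of $\fd(A)$ and closure under summands in the idempotent-complete $\D(A)$), then invoke \cite[Thm.~3.6]{INP24} together with \cref{MainTheorem}. The paper cites \cite[Prop.~6.12]{AMY19} for $\Ext$-finiteness and \cite[Coro.~4.4]{Kra15} for the Krull--Schmidt step, but the content is the same as what you wrote; enough projectives and injectives are not needed at this point.
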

	
    \begin{proof}
		By \cite[Prop.~6.12]{AMY19}, $\Hc{d}$ is a $k$-linear $\Ext$-finite extriangulated category. Moreover, $\D(A)$ is idempotent complete by combining \cite[Sec.~4.1]{Kel94} and \cite[Prop.~1.6.8]{Nee01}. Now $\Hc{d}$ is closed under direct summands in $\D(A)$ and hence $\Hc{d}$ is Krull-Schmidt by \cite[Coro.~4.4]{Kra15}. Therefore, $\Hc{d}$ has almost-split conflations by \cite[Thm.~3.6]{INP24} and \cref{MainTheorem}.
    \end{proof}

    We prove the following version of the 1st Brauer--Thrall Conjecture in this context by generalising the Harada--Sai Lemma for $\Hc{d}$. We refer the reader to \cref{AR-quivers} for the definitions.

    \newtheorem*{theoremRepeat}{Theorem {\ref*{BT1}}} 
    \begin{theoremRepeat}
        Let $A$ be $d$-connected and $\C$ a bounded connected component of the Auslander--Reiten quiver of $\Hc{d}$. Then $\C$ is finite and $\C$ is the whole of the Auslander--Reiten quiver of $\Hc{d}$.
    \end{theoremRepeat}
	
    The main part of this note consists in constructing the additive equivalence $\tau$ in the statement above. Our construction is a generalisation of the following construction of the Auslander--Reiten translation for a finite dimensional algebra $\Lambda$. In fact, for the case $A\simeq H^0(A)$ and $d=1$, our construction is exactly the same and, still if $A\simeq H^0(A)$ but $d\geq 1$, our construction agrees with \cite[Def.~3.7]{Z25}.
	
    Recall the Nakayama functor $\nu=-\otimes_{\Lambda} \DD \Lambda$, which induces an equivalence 
    \[[-1]\circ \nu:\mathrm{K}^{[-1,0]}(\mathrm{proj}(A))\stackrel{\sim}{\longrightarrow} \mathrm{K}^{[0,1]}(\mathrm{inj}(A)).\] 
    Then $\tau$ is defined by the following commutative diagram:
	
    \[\begin{tikzcd}
		{\frac{\mathrm{K}^{[-1,0]}(\mathrm{proj(\Lambda)})}{\add(\Lambda[1]\oplus \Lambda)}} & {\frac{\mathrm{K}^{[0,1]}(\mathrm{inj(\Lambda)})}{\add(\DD \Lambda[-1]\oplus \DD \Lambda)}} \\
		{\underline{\Mod}(\Lambda)} & {\overline{\Mod}(\Lambda)}
		\arrow["{[-1]\circ \nu}", from=1-1, to=1-2]
		\arrow["\sim"', draw=none, from=1-1, to=1-2]
		\arrow["{\mathrm{coker}\cong H^0}"', from=1-1, to=2-1]
		\arrow["\sim", draw=none, from=1-1, to=2-1]
		\arrow["{H^{0}\cong \mathrm{ker}}", from=1-2, to=2-2]
		\arrow["\sim"', draw=none, from=1-2, to=2-2]
		\arrow["\tau", from=2-1, to=2-2]
    \end{tikzcd}\]

    The \definef{derived Nakayama functor} is defined as
    \[\define{\nu}=\define{\nu_A}\coloneqq-\otimes^{\mathbb{L}}_A\DD(A)\colon\D(A)\to\D(A),\] 
    and restricts to a triangle equivalence $\per(A):=\thick(A)\stackrel{\sim}{\to}\thick(\DD A)$, see {\cite[Sec.~4.5]{KY14}}. Moreover, for $P\in\per(A)$ and $M\in\fd(A)$, we have the relative Serre-duality formula
    \begin{equation}\label{AR-formula}
		\Hom_{A}(P,M)\cong \DD \Hom_{A}(M,\nu_AP),
    \end{equation}
    which is natural in $P$ and $M$, see \cite[Sec.~10.4]{Kel94}.
	
    For $n\in\mathbb{Z}$, recall the inclusion functors, as well as their right and left adjoint \definef{truncation functors} respectively: 
    \[\begin{tikzcd}
		\fd(A)^{\leq n} & \fd(A) & \fd(A)^{\geq n}
		\arrow[hook,""{name=0, anchor=center, inner sep=0}, "{\iota^{\leq n}}", shift left=2, from=1-1, to=1-2]
		\arrow[""{name=1, anchor=center, inner sep=0}, "\trunc^{\leq n}", shift left=2, from=1-2, to=1-1]
		\arrow[""{name=2, anchor=center, inner sep=0}, "\trunc^{\geq n}", shift left=2, from=1-2, to=1-3]
		\arrow[hook',""{name=3, anchor=center, inner sep=0}, "{\iota^{\geq n}}", shift left=2, from=1-3, to=1-2]
		\arrow["\dashv"{anchor=center, rotate=-90}, draw=none, from=0, to=1]
		\arrow["\dashv"{anchor=center, rotate=-90}, draw=none, from=2, to=3]
    \end{tikzcd}\]
	
    Recall the following notation from \cite[Sec.~2]{IY08}. For a triangulated category $\T$ and two full subcategories $\mathcal{X},\mathcal{Y}\subseteq\T$, we denote by $\mathcal{X}\ast\mathcal{Y}$, the full subcategory of $\T$ consisting of objects $T$ such that there exist $X\in\mathcal{X}, Y\in\mathcal{Y}$ and a triangle
    \[X\overset{a}\longrightarrow T \overset{b}\longrightarrow Y\overset{c}\longrightarrow X[1].\]
    The octahedral axiom yields the associativity of the binary operation $(\mathcal{X},\mathcal{Y})\mapsto \mathcal{X}\ast\mathcal{Y}$. We denote by 
    \[\define{\per(A)^{[-d,0]}}\coloneqq\add(A)\ast\add(A)[1]\ast\cdots \ast\add(A)[d]\subseteq \per(A)\] 
    the full subcategory of \definef{$(d+1)$-term perfect dg $A$-modules}, that is the interval for the canonical co-$t$-structure on $\per(A)$. Dually, we define
    \[\define{\thick(\DD A)^{[0,d]}}=\add(\DD A)[-d]\ast\add(\DD A)[-d+1]\ast\cdots \ast\add(\DD A)\subseteq \thick(\DD A).\]
    
    \begin{rema}
		In the case $A\simeq H^0(A)=:\Lambda$ is a finite-dimensional algebra and $d=1$, the $2$-term perfect dg $A$-modules $\per(A)^{[-1,0]}\cong K^{[-1,0]}(\mathrm{proj}(\Lambda))$ is a motivating example of a $0$-Auslander extriangulated category considered in \cite{GNP23,Chen230,FGPPP24}. Hence, $\per(A)^{[-d,0]}$ could be regarded as a natural analogue for dg algebras. We make this analogy more precise in \cref{FinalSection}.
    \end{rema}
    
    As $\nu_A(A)\cong \DD(A)$ it follows that $\nu_A$ induces equivalences
    \begin{equation}\label{NakayamaEquiv}
		\begin{tikzcd}
			{\per(A)} & {\thick(\DD A)} \\
			{\per(A)^{[-d,0]}} & {\thick(\DD A)^{[0,d]}}
			\arrow["{\nu_A}", from=1-1, to=1-2]
			\arrow["\sim"', draw=none, from=1-1, to=1-2]
			\arrow[u, phantom, sloped, "\subseteq",from=2-1, to=1-1]
			\arrow["{[-d]\circ \nu_A}", from=2-1, to=2-2]
			\arrow["\sim"', draw=none, from=2-1, to=2-2]
			\arrow[u, phantom, sloped, "\subseteq",from=2-2, to=1-2]
		\end{tikzcd}
    \end{equation}
    
    Since $A\in\fd(A)$ it follows that $\per(A),\thick(\DD A)\subseteq \fd(A)$. In \cref{d-equiv-stable} and \cref{d-equiv-costable} we prove that $\trunc^{>-d}$ and $[d-1]\circ\trunc^{<d}$ induce equivalences
    \[\underline{\trunc^{>-d}}\colon \frac{\per(A)^{[-d,0]}}{\add\left(A[d]\oplus A\right)}\stackrel{\sim}\longrightarrow \underline{\Hc{d}}, \quad \quad
    \overline{[d-1]\circ\trunc^{<d}}\colon \frac{\thick(\DD A)^{[0,d]}}{\add\left(\DD A[-d]\oplus \DD A\right)}\stackrel{\sim}\longrightarrow \overline{\Hc{d}}.\]
	
    Finally, we denote by $(\underline{\trunc^{>-d}})^{-}$ a quasi-inverse of $\underline{\trunc^{>-d}}$ and define $\define{\tau}\colon\underline{\Hc{d}}\to \overline{\Hc{d}}$ by
    \begin{equation}\label{TauDef}
		\tau\coloneqq\overline{[d-1]\circ\trunc^{<d}}  \circ ([-d]\circ\nu_A) \circ (\underline{\trunc^{>-d}})^{-}, \quad 
		\begin{tikzcd}
			{\frac{\per(A)^{[-d,0]}}{\add\left(A[d]\oplus A\right)}} & {\frac{\thick(\DD A)^{[0,d]}}{\add\left(\DD A[-d]\oplus \DD A\right)}} \\
			{\underline{\Hc{d}}} & {\overline{\Hc{d}}}
			\arrow["{[-d]\circ\nu_A}", from=1-1, to=1-2]
			\arrow["\sim"', draw=none, from=1-1, to=1-2]
			\arrow["{(\underline{\trunc^{>-d}})^{-}}"', from=2-1, to=1-1]
			\arrow["\sim", draw=none, from=2-1, to=1-1]
			\arrow["{\overline{[d-1]\circ\trunc^{<d}}}", from=1-2, to=2-2]
			\arrow["\sim"', draw=none, from=1-2, to=2-2]
			\arrow["\tau", from=2-1, to=2-2]
		\end{tikzcd}
    \end{equation}

    \section{Proof of \cref{MainTheorem}}
    Following \cite{Bon24}, we denote by $\Dproj\coloneqq\proj(\fd(A)^{\leq 0})$ and $\Dinj\coloneqq\inj(\fd(A)^{\geq 0})$ the full subcategories of \definef{derived projective} and \definef{derived injective} objects respectively. Recall from \cite[Lem.~3.3]{Bon24} that $H^0(P)\in\mathrm{proj}(H^0(A))$. Analogously, one shows that $H^0(I)\in\mathrm{inj}(H^0(A))$.
	
    Let now $P\in\Dproj$ and $X\in\fd(A)^{\leq 0}$. We call a morphism $p\colon P\to X$ a \definef{derived projective cover} of $X$ if $H^0(p)$ is a projective cover in $\Mod(H^0(A))$. Dually, for $I\in\Dinj$ and $Y\in\fd(A)^{\geq 0}$, a morphism $i\colon Y\to I$ is called a \definef{derived injective envelope} of $Y$, if $H^0(i)$ is an injective envelope in $\Mod(H^0(A))$. By \cite[Thm.~3.15]{Bon24} there is an equality $\Dproj=\add(A)$ and every $X\in\fd(A)^{\leq 0}$ has a derived projective cover. 
	
    \begin{prop}\label{EnoughInj}
		Every object in $\fd(A)^{\geq 0}$ has a derived injective envelope.
    \end{prop}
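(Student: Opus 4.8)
The plan is to deduce the statement from its already-established projective counterpart — namely the part of \cite[Thm.~3.15]{Bon24} asserting that every object of $\fd(A^{op})^{\leq 0}$ admits a derived projective cover — by transporting it along the $k$-linear duality $\DD$. Recall that $A^{op}$ is again a proper connective dg algebra and that $\DD=\mathbb{R}\Hom_k(-,k)$ is a contravariant triangle equivalence $\DD\colon\fd(A^{op})^{op}\stackrel{\sim}{\to}\fd(A)$ with $\DD\circ\DD\cong\id$, satisfying $\DD(M[n])\cong(\DD M)[-n]$ and, since $k$ is a field, $H^{i}(\DD M)\cong\DD H^{-i}(M)$ for all $i$ (functorially in $M$). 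In particular $\DD$ restricts to a contravariant equivalence $(\fd(A^{op})^{\leq 0})^{op}\stackrel{\sim}{\to}\fd(A)^{\geq 0}$, and on degree-zero cohomology it induces the usual $k$-duality $\Mod(H^0(A^{op}))\to\Mod(H^0(A))$, a contravariant equivalence carrying projective modules to injective modules and projective covers to injective envelopes.

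First I would record that $\DD$ matches derived projectives of $A^{op}$ with derived injectives of $A$. Indeed, for $P\in\fd(A^{op})^{\leq 0}$ and $X\in\fd(A)^{\geq 0}$ one has, naturally in $X$,
$\EE_A(X,\DD P)=\Hom_{\D(A)}(X,\DD P[1])\cong\Hom_{\D(A^{op})}(\DD(\DD P[1]),\DD X)\cong\Hom_{\D(A^{op})}(P,\DD X[1])=\EE_{A^{op}}(P,\DD X)$,
and as $X$ ranges over $\fd(A)^{\geq 0}$ the object $\DD X$ ranges over $\fd(A^{op})^{\leq 0}$. Hence $\DD P$ is injective in $\fd(A)^{\geq 0}$ if and only if $P$ is projective in $\fd(A^{op})^{\leq 0}$; in particular $\DD$ carries $\add(A^{op})$ into $\Dinj$.

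Then, given $Y\in\fd(A)^{\geq 0}$, I would set $Z\coloneqq\DD Y\in\fd(A^{op})^{\leq 0}$ and apply \cite[Thm.~3.15]{Bon24} to $A^{op}$ to obtain a derived projective cover $p\colon P\to Z$, i.e.\ $P\in\add(A^{op})$ and $H^{0}(p)$ a projective cover in $\Mod(H^0(A^{op}))$. Applying $\DD$ and precomposing with the canonical isomorphism $Y\stackrel{\sim}{\to}\DD\DD Y$ yields a morphism $i\colon Y\to\DD P$ with $\DD P\in\Dinj$ by the previous step. Finally, under the identifications $H^{0}(\DD\DD Y)\cong H^0(Y)$ and $H^0(\DD P)\cong\DD H^0(P)$ one gets $H^{0}(i)\cong\DD(H^{0}(p))$ (up to an isomorphism on the source), which is an injective envelope of $H^0(Y)$ in $\Mod(H^0(A))$ because the $k$-duality sends the projective cover $H^0(p)$ to an injective envelope. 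Thus $i$ is a derived injective envelope of $Y$.

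The argument is essentially formal; the only point needing a little care is the bookkeeping under $\DD$ — checking that $\DD$ interchanges $\fd(A^{op})^{\leq 0}$ with $\fd(A)^{\geq 0}$, that it is compatible with the extriangulated structures so that ``derived projective'' dualizes to ``derived injective'', and that it converts a projective cover on $H^0$ into an injective envelope. I do not expect a genuine obstacle here, only routine verification, so the proof should be short.
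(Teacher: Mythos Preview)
Your argument is correct and complete; the bookkeeping under $\DD$ is exactly as you describe, and the identification $H^0(i)\cong\DD(H^0(p))$ goes through because $\DD$ is $t$-exact for the opposite $t$-structure.

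The route is genuinely different from the paper's, though. The paper stays inside $\fd(A)$ and uses the derived Nakayama functor $\nu_A=-\otimes^{\mathbb L}_A\DD A$ rather than the duality $\DD$: it first checks via the relative Serre-duality formula that $\DD A=\nu_A(A)$ is derived injective, then for $X\in\fd(A)^{\geq 0}$ it takes an injective envelope $\iota^0\colon H^0(X)\to I^0$ in $\Mod(H^0(A))$, writes $I^0=\nu_{H^0(A)}(P^0)$, lifts $P^0$ to $P\in\add(A)$ using \cite[Thm.~3.15]{Bon24}, sets $I=\nu_A(P)$, and finally lifts $\iota^0$ to a map $X\to I$ using the (dualized) isomorphism $H^0\colon\Hom_A(X,I)\cong\Hom_{H^0(A)}(H^0(X),H^0(I))$. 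So the paper invokes \cite{Bon24} for $A$ itself (to lift a projective \emph{module}) and handles the morphism lifting separately, whereas you invoke \cite{Bon24} for $A^{op}$ (to get a derived projective \emph{cover}) and let $\DD$ transport both object and morphism at once. Your approach is arguably cleaner and more conceptual; the paper's has the advantage of establishing directly that $\nu_A$ carries $\add(A)$ into $\Dinj$, a fact it uses again when constructing $\tau$. Interestingly, the paper does adopt precisely your duality-to-$A^{op}$ strategy later, in the proof of \cref{d-equiv-costable}.
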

	
    \begin{proof}
		We first observe that $\Dual(A)$ is derived injective, since for $X\in \fd(A)^{\geq 0}$
		\[\Hom_{A}(X,\DD(A)[1])\cong\Hom_A(X[-1],\nu_A(A))\stackrel{(\ref{AR-formula})}{\cong} \DD\Hom(A,X[-1]) \cong \DD H^{-1}(X)=0.\]
		Hence $\nu_A$ restricts to a functor
		\[\nu_A\colon\Dproj=\add(A)\to \add(\DD A)\subseteq\Dinj.\]
		
		Let $X\in\fd(A)^{\geq 0}$. Since $H^0(A)$ is a finite dimensional algebra, there exists an injective envelope $\iota^0\colon H^0(X)\to I^0$ in $\Mod(H^0(A))$. Moreover, there exists $P^0\in\mathrm{proj}(H^0(A))$ such that $\nu_{H^0(A)}(P^0)=I^0$. By \cite[Thm.~3.15]{Bon24}, there exists $P\in\Dproj$ such that $H^0(P)=P^0$. Define $I=\nu_A(P)\in\Dinj$. As $H^0(\nu_A(A))\cong H^0(\DD A)\cong \DD H^0(A)\cong \nu_{H^0(A)}(H^0(A))$ it follows 
		\[H^0(I)=H^0(\nu_A(P))\cong\nu_{H^0(A)}(H^0(P))=I^0.\] 
		Dually to \cite[Lem.~3.2]{Bon24} one proves that $H^0$ induces an isomorphism 
		\begin{equation}\label{H0Iso}
			H^0\colon\Hom_A(X,I)\overset{\cong}{\longrightarrow}\Hom_A(H^0(X),H^0(I)).
		\end{equation} 
		Let $\iota\colon X\to I$ be the image of $\iota^0$ under this isomorphism. This is by definition a derived injective envelope.
    \end{proof}
	
    \begin{coro}
		The functor $H^0\colon\fd(A)\to \Mod(H^0(A))$ restricts to an equivalence of categories 
		\[H^0\colon\Dinj\stackrel{\sim}\longrightarrow \mathrm{inj}(H^0(A)).\] In particular, $\Dinj=\add(\DD A)$
    \end{coro}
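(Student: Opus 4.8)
The plan is to show that the restriction $H^0\colon\Dinj\to\mathrm{inj}(H^0(A))$ is well-defined, fully faithful and essentially surjective, and then to extract the identification $\Dinj=\add(\DD A)$ from this equivalence. That the functor lands in $\mathrm{inj}(H^0(A))$ was already recorded at the start of this section. For full faithfulness, the key observation is that $\Dinj\subseteq\fd(A)^{\geq 0}$, so that for any $I,I'\in\Dinj$ the isomorphism \eqref{H0Iso} --- the dual of \cite[Lem.~3.2]{Bon24}, which applies to an arbitrary derived injective object --- specialises to an isomorphism $H^0\colon\Hom_A(I',I)\overset{\sim}{\longrightarrow}\Hom_{H^0(A)}(H^0(I'),H^0(I))$.

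For essential surjectivity I would reuse the construction from the proof of \cref{EnoughInj}: given $J\in\mathrm{inj}(H^0(A))$, write $J\cong\nu_{H^0(A)}(P^0)$ with $P^0\in\mathrm{proj}(H^0(A))$, lift $P^0$ to some $P\in\Dproj=\add(A)$ via \cite[Thm.~3.15]{Bon24}, and set $I\coloneqq\nu_A(P)\in\add(\DD A)\subseteq\Dinj$; then $H^0(I)\cong\nu_{H^0(A)}(H^0(P))\cong J$, exactly as in that proof. This establishes the asserted equivalence $H^0\colon\Dinj\overset{\sim}{\longrightarrow}\mathrm{inj}(H^0(A))$.

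It remains to deduce $\Dinj=\add(\DD A)$, and I expect this to be the only step requiring some care. The inclusion $\add(\DD A)\subseteq\Dinj$ is immediate, since $\DD A$ is derived injective and $\Dinj$ is closed under finite direct sums and direct summands. For the reverse inclusion, take $I\in\Dinj$; using $H^0(\DD A)\cong\DD H^0(A)$ and $\mathrm{inj}(H^0(A))=\add(\DD H^0(A))$, the module $H^0(I)$ is a direct summand of $H^0\!\big((\DD A)^{n}\big)$ for some $n$. By full faithfulness the corresponding projector lifts to an idempotent of $(\DD A)^{n}$ in $\Dinj$, and since $\fd(A)$ is idempotent complete this idempotent splits, yielding a decomposition $(\DD A)^{n}\cong I'\oplus I''$ in $\fd(A)$ with $H^0(I')\cong H^0(I)$. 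As $\Dinj$ is closed under direct summands we have $I'\in\Dinj$, and since a fully faithful functor reflects isomorphisms we conclude $I\cong I'\in\add(\DD A)$. Everything outside this last paragraph is a direct application of results already available in the excerpt, so no further difficulties are anticipated.
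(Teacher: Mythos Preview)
Your proposal is correct and follows essentially the same approach as the paper: full faithfulness from \eqref{H0Iso}, essential surjectivity via the $\nu_A$-lift from the proof of \cref{EnoughInj}, and then the identification $\Dinj=\add(\DD A)$ deduced from the equivalence. The only cosmetic difference is that the paper phrases density by applying the \emph{statement} of \cref{EnoughInj} directly to $I^0\in\fd(A)^{\geq 0}$ (the injective envelope of an injective is an isomorphism), and leaves the ``In particular'' clause implicit rather than spelling out the idempotent-lifting argument you give.
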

	
    \begin{proof}
        For every injective $H^0(A)$-module $I^0$ there exists a derived injective envelope $\iota\colon I^0\to I$ by \cref{EnoughInj}. As $H^0(\iota)$ is an injective envelope it holds $I^0\cong H^0(I)$. This proves density. Fully faithfulness follows from (\ref{H0Iso})
    \end{proof}
	
    \begin{coro}\label{EnoughPI}
		The extended heart $\Hc{d}$ has enough projectives and injectives as an extriangulated category. Moreover, $\proj(\Hc{d})=\add(A)$ and $\inj(\Hc{d})=\add(\DD(A)[d-1])$.
    \end{coro}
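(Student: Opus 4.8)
The approach is to dualise the treatment of derived projectives in \cite{Bon24}. The claim about projectives is essentially already recorded in the introduction: $A\in\Hc{d}$ forces $H^{i}(A)=0$ for $i\notin[1-d,0]$, whence $\EE(A,X)=\Hom_A(A,X[1])=H^{1}(X)=0$ for every $X\in\Hc{d}\subseteq\fd(A)^{\leq 0}$, so $\add(A)\subseteq\proj(\Hc{d})$; and by \cite[Thm.~3.15]{Bon24} every $X\in\Hc{d}$ admits a derived projective cover $p\colon P\to X$ with $P\in\add(A)$ and $H^{0}(p)$ epic, which is a deflation by \cref{RemarkExHeart}(2). The plan for injectives mirrors this. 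First I would record the degree bookkeeping: from $H^{i}(A)=0$ for $i\notin[1-d,0]$ we get $H^{i}(\DD A)=\DD H^{-i}(A)=0$ for $i\notin[0,d-1]$, hence $H^{i}(\DD A[d-1])=0$ for $i\notin[1-d,0]$, so $\DD A[d-1]\in\fd(A)^{\leq 0}\cap\fd(A)^{>-d}=\Hc{d}$, and therefore $\add(\DD A[d-1])\subseteq\Hc{d}$.

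Next I would verify that $\add(\DD A[d-1])$ consists of injective objects. For $X\in\Hc{d}$, using $\DD A=\nu_A A$ and the relative Serre-duality formula \eqref{AR-formula} with $P=A\in\per(A)$ and $M=X[-d]\in\fd(A)$,
\[
\EE(X,\DD A[d-1])=\Hom_A(X,\DD A[d])=\Hom_A(X[-d],\nu_A A)\cong\DD\Hom_A(A,X[-d])=\DD H^{-d}(X),
\]
which vanishes because $X\in\fd(A)^{>-d}$. By biadditivity of $\EE$ this gives $\add(\DD A[d-1])\subseteq\inj(\Hc{d})$; note also that $\add(\DD A[d-1])$ is closed under direct summands by definition of $\add$.

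Then I would produce enough injectives: for $X\in\Hc{d}$ the object $X[1-d]$ lies in $\fd(A)^{\geq 0}$ by the degree count, so \cref{EnoughInj} supplies a derived injective envelope $j\colon X[1-d]\to I$ with $I\in\Dinj=\add(\DD A)$ and $H^{0}(j)$ a monomorphism. Shifting by $[d-1]$ gives $j[d-1]\colon X\to I[d-1]$ with $I[d-1]\in\add(\DD A[d-1])\subseteq\Hc{d}$ and $H^{-d+1}(j[d-1])=H^{0}(j)$ monic, so $j[d-1]$ is an inflation by \cref{RemarkExHeart}(1), whose target is injective by the previous step. Finally, to see that these exhaust the injectives, let $I\in\inj(\Hc{d})$ and complete the inflation $I\to J$ just constructed (with $J\in\add(\DD A[d-1])$) to a conflation $I\to J\to C\to I[1]$ with $C\in\Hc{d}$; its classifying $\EE$-extension lies in $\EE(C,I)=0$, so the conflation splits, $I$ is a direct summand of $J$, and hence $I\in\add(\DD A[d-1])$. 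The symmetric argument (splitting a deflation $Q\to P$ onto a projective $P$, using $\EE(P,-)=0$) gives $\proj(\Hc{d})\subseteq\add(A)$.

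The only delicate point is the shift bookkeeping in the "enough injectives" step: one must align the cohomological degree $-d+1$ that detects inflations in $\Hc{d}$ (via \cref{RemarkExHeart}) with the degree $0$ that detects injective envelopes in the heart $\mathcal{H}$ (via \cref{EnoughInj}), and check that $I[d-1]$ genuinely lies in $\Hc{d}$; both are settled by the elementary degree computation above. Everything else — the $\EE$-vanishing via \eqref{AR-formula}, and the splitting of a conflation with injective "kernel" — is routine extriangulated formalism.
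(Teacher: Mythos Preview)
Your proposal is correct and follows essentially the same route as the paper's proof: derived projective covers from \cite{Bon24} give enough projectives, derived injective envelopes from \cref{EnoughInj} (shifted by $d-1$) give enough injectives, and the splitting argument pins down $\proj(\Hc{d})$ and $\inj(\Hc{d})$ exactly. The only cosmetic difference is that you verify $\EE(X,\DD A[d-1])=0$ via the Serre-duality formula \eqref{AR-formula}, whereas the paper phrases it as $\Hom_A(Y[-d],I)=0$ for $Y[-d]\in\fd(A)^{>0}$ using the derived-injective property (implicitly via \eqref{H0Iso}); both compute the same vanishing.
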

	
    \begin{proof}
		Let $X\in\Hc{d}$. By \cite[Thm.~3.15]{Bon24}, there exists $P\in\add(A)$ and $p\colon P\to X$ a derived projective cover. As $A\in\Hc{d}$ by assumption, it follows that $P\in\Hc{d}$ and $P$ is projective. Moreover $H^0(p)$ is an epimorphism by assumption, which means $p$ is a deflation by \cref{RemarkExHeart}. Hence, $\Hc{d}$ has enough projectives. In particular, for $P'\in\proj(\Hc{d})$, there is a deflation $P\to P'$ with $P\in\add(A)$. However, this has to split as $P'$ is projective, which means $P'\in\add(A)$.
		
		Dually, by \cref{EnoughInj} we obtain $I\in\add(\DD A)$ and $\iota\colon X[-d+1]\to I$ a derived injective envelope. Then it holds $I[d-1]\in\Hc{d}$ and for all $Y\in\Hc{d}$ we have 
		\[\EE(Y,I[d-1])\cong\Hom_A(Y,I[d])\cong\Hom_A(Y[-d],I)=0,\]
		since $Y[-d]\in\fd(A)^{>0}$. Therefore $I[d-1]\in\Hc{d}$ is injective. Finally, $H^{-d+1}(\iota[d-1])=H^0(\iota)$ is a monomorphism by assumption, and $\iota[d-1]\colon X\to I[d-1]$ is an inflation by \cref{RemarkExHeart}. Dually to the above we see $\inj(\Hc{d})=\add(\DD A[d-1])$.
    \end{proof}
	
    \begin{rema}\label{StableHom}
		Let $Y\in\fd(A)^{\leq 0}$ and $p\colon P\to Y$ be a derived projective cover of $Y$. Then, by \cite[Lem.~3.13]{Bon24}, $p$ is a minimal right approximation of $Y$ by derived projectives. Therefore, every morphism that factors through $\proj(\Hc{d})$, factors through $p$ and for every $X,Y\in\Hc{d}$, we obtain an exact sequence
		\[\Hc{d}(X,P)\overset{p_*}{\longrightarrow} \Hc{d}(X,Y)\to \underline{\Hc{d}}(X,Y)\to 0\]
    \end{rema}
	
    Analogously to the classical case we will prove the following result.
	
    \begin{theorem}\label{d-term-Equiv}
		The functor $\trunc^{>-d}\colon\per(A)^{[-d,0]}\to\Hc{d}$ induces an equivalence 
		\[\trunc^{>-d}\colon\frac{\per(A)^{[-d,0]}}{\add(A[d])}\stackrel{\sim}\longrightarrow \Hc{d}.\]
    \end{theorem}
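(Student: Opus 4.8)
The plan is to show that $\trunc^{>-d}$ (which we take to mean $\trunc^{\geq 1-d}$) descends to the quotient $\per(A)^{[-d,0]}/\add(A[d])$ and that the induced functor is dense, full and faithful. First I would record well-definedness: an object of $\per(A)^{[-d,0]}$ is an iterated extension of objects of $\add(A)[j]$ with $0\leq j\leq d$, hence lies in the extension-closed subcategory $\fd(A)^{\leq 0}$, so $\trunc^{>-d}$ maps $\per(A)^{[-d,0]}$ into $\fd(A)^{\leq 0}\cap\fd(A)^{>-d}=\Hc{d}$. Since $H^{*}(A)$ is concentrated in degrees $[1-d,0]$ we have $H^{i}(A[d])=H^{i+d}(A)=0$ for all $i\geq 1-d$, so $\trunc^{>-d}(A[d])=0$ and the functor annihilates $\add(A[d])$, hence factors through the quotient.

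For density, given $X\in\Hc{d}$ I would build a projective resolution: put $K_{-1}:=X$ and choose iteratively derived projective covers $p_{i}\colon P^{i}\to K_{i-1}$ with $P^{i}\in\add(A)$ (these exist by \cite[Thm.~3.15]{Bon24}) and $K_{i}:=\Cocone(p_{i})$; surjectivity of $H^{0}(p_{i})$ together with the long exact cohomology sequence shows inductively that $K_{i}\in\fd(A)^{\leq 0}$ for all $i$, so the construction is legitimate. I would then totalise the first $d+1$ terms: set $P_{\leq 0}:=P^{0}$ (with the triangle of $p_{0}$) and, inductively, let $P_{\leq j+1}:=\cone\bigl(P^{j+1}[j]\to K_{j}[j]\to P_{\leq j}\bigr)$, where the first map is $p_{j+1}[j]$ and the second comes from the triangle of the previous step. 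Then $P_{\leq j}\in\add(A)\ast\cdots\ast\add(A)[j]$, and the octahedral axiom applied to this composite produces triangles $K_{j}[j]\to P_{\leq j}\xrightarrow{\epsilon_{j}} X\to K_{j}[j+1]$. Taking $j=d$ gives $P_{\leq d}\in\per(A)^{[-d,0]}$; since $K_{d}\in\fd(A)^{\leq 0}$ we have $K_{d}[d],K_{d}[d+1]\in\fd(A)^{\leq -d}$, so $H^{m}(\epsilon_{d})$ is an isomorphism for all $m\geq 1-d$ and therefore $\trunc^{>-d}(\epsilon_{d})\colon\trunc^{>-d}(P_{\leq d})\xrightarrow{\sim}\trunc^{>-d}(X)=X$.

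For fullness I would exploit the adjunction $\trunc^{\geq 1-d}\dashv\iota^{\geq 1-d}$. Fix $P,Q\in\per(A)^{[-d,0]}$, put $X=\trunc^{>-d}(P)$, $Y=\trunc^{>-d}(Q)$, and let $g\colon X\to Y$. Then $g$ corresponds to $\tilde g:=g\circ\eta_{P}\colon P\to Y$ in $\fd(A)$, where $\eta$ is the unit of the adjunction and $\eta_{P}\colon P\to\trunc^{>-d}(P)=X$ the canonical truncation map; moreover $h\mapsto h\circ\eta_{P}$ is a bijection $\Hom_{A}(X,Y)\cong\Hom_{A}(P,Y)$. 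The canonical triangle of $\eta_{Q}$ is $\trunc^{\leq -d}(Q)\to Q\xrightarrow{\eta_{Q}} Y\to\trunc^{\leq -d}(Q)[1]$, and for $0\leq j\leq d$ we have $\Hom_{A}(A[j],\trunc^{\leq -d}(Q)[1])=H^{1-j}(\trunc^{\leq -d}(Q))=0$ because this complex has cohomology only in degrees $\leq -d$; as $P$ is an iterated extension of such $A[j]$ this forces $\Hom_{A}(P,\trunc^{\leq -d}(Q)[1])=0$, so $\tilde g$ lifts through $\eta_{Q}$ to some $f\colon P\to Q$. Naturality of $\eta$ and the bijection above then give $\trunc^{>-d}(f)=g$, so $\Hom_{\per(A)^{[-d,0]}}(P,Q)\to\Hom_{\Hc{d}}(X,Y)$ is onto, hence so is the induced map out of the quotient.

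For faithfulness — which I expect to be the main obstacle, since it is here that the precise interaction between the co-$t$-structure window $\per(A)^{[-d,0]}$ and the $t$-structure truncation $\trunc^{>-d}$ really matters — suppose $f\colon P\to Q$ in $\per(A)^{[-d,0]}$ satisfies $\trunc^{>-d}(f)=0$. The naturality computation gives $\eta_{Q}\circ f=0$, so $f$ factors as $P\xrightarrow{f'}\trunc^{\leq -d}(Q)\to Q$ through the canonical triangle of $\eta_{Q}$. By associativity of $\ast$ we have $\per(A)^{[-d,0]}=\per(A)^{[-d+1,0]}\ast\add(A)[d]$, giving a triangle $P_{0}\to P\xrightarrow{\pi}A_{P}[d]\to P_{0}[1]$ with $P_{0}\in\per(A)^{[-d+1,0]}$ and $A_{P}\in\add(A)$; and for $0\leq j\leq d-1$ one has $\Hom_{A}(A[j],\trunc^{\leq -d}(Q))=H^{-j}(\trunc^{\leq -d}(Q))=0$, whence $\Hom_{A}(P_{0},\trunc^{\leq -d}(Q))=0$ and $f'$ factors through $\pi$. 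Therefore $f$ factors through $A_{P}[d]\in\add(A[d])$ and vanishes in the quotient. The recurring point is that exactly $d+1$ perfect terms are needed: the tail $K_{d}[d]$ of a projective resolution lands in $\fd(A)^{\leq -d}$ and is killed by $\trunc^{>-d}$, and the very same degree count yields the Hom-vanishings $\Hom_{A}(P,\trunc^{\leq -d}(Q)[1])=0$ and $\Hom_{A}(P_{0},\trunc^{\leq -d}(Q))=0$ underlying the liftings in the last two steps.
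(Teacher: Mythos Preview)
Your proof is correct and follows essentially the same strategy as the paper: density via iterated derived projective covers totalised with the octahedral axiom (the paper's \cref{d-res-lemma}), fullness via the vanishing $\Hom_A(P,\trunc^{\leq -d}(Q)[1])=0$ (the paper's \cref{ProjDimd}), and faithfulness via the decomposition triangle $P_0\to P\to A_P[d]\to P_0[1]$ with $P_0\in\per(A)^{[-d+1,0]}$. The only organisational difference is in the faithfulness step: you first factor $f$ through $\trunc^{\leq -d}(Q)$ and then use $\Hom_A(P_0,\trunc^{\leq -d}(Q))=0$ to push through $\pi$, whereas the paper packages the analogous vanishing into a separate lemma (\cref{TruncIso}) asserting that $\trunc^{>-d}$ is an isomorphism on $\Hom_A(P_0,Q)$ and then reads off the factorisation from the long exact sequence of $\Hom_A(-,Q)$ applied to the decomposition triangle; the underlying computations are identical.
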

	
    \begin{rema}
		\cref{d-term-Equiv} is a generalisation of \cite[Prop.~3.1]{G24}. Our proof differs from the one in \cite{G24} in the sense that we make use of the extriangulated structure on $\Hc{d}$.
    \end{rema}
	
    For the proof we need some preparation. First we observe that one can resolve objects in $\Hc{d}$ by $(d+1)$-term perfect dg $A$-modules. For $d=1$ this is simply the statement that every finite-dimensional $A$-module admits a projective presentation.
	
    \begin{lemma}\label{d-res-lemma}
		Let $n\in\mathbb{N}$ and $X\in\fd(A)^{\leq 0}$. Then there exists $P\in\per(A)^{[-n,0]}$ such that $\trunc^{>-n}P\cong \trunc^{>-n}X$. In particular, for $X\in\Hc{d}$ there exists $P\in\per(A)^{[-d,0]}$ such that $\trunc^{>-d}P\cong X$.
    \end{lemma}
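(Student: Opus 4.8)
The plan is to carry out, inside $\D(A)$, the evident $(n{+}1)$-fold iteration of ``taking a projective cover''; for $d=1$ and $A\simeq H^0(A)$ this is nothing but passing from a module $X$ to (the complex underlying) a projective presentation $Q_1\to Q_0\to X\to 0$. Concretely, by \cite[Thm.~3.15]{Bon24} we have $\Dproj=\add(A)$ and every object of $\fd(A)^{\leq 0}$ admits a derived projective cover, so starting from $X_0\coloneqq X$ I would choose inductively, for $i=0,\dots,n$, a derived projective cover $p_i\colon Q_i\to X_i$ with $Q_i\in\add(A)$ and set $X_{i+1}\coloneqq\Cocone(p_i)$, giving a triangle $X_{i+1}\to Q_i\xrightarrow{p_i}X_i\to X_{i+1}[1]$ in $\D(A)$.

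The first point to check is that each $X_{i+1}$ again lies in $\fd(A)^{\leq 0}$, so that the iteration is well posed. This is read off the long exact cohomology sequence of the triangle: for $j\geq 2$ the terms $H^{j-1}(X_i)$ and $H^{j}(Q_i)$ flanking $H^{j}(X_{i+1})$ both vanish since $X_i,Q_i\in\fd(A)^{\leq 0}$, while $H^{1}(X_{i+1})\cong\Coker H^{0}(p_i)=0$ because $H^0(p_i)$, being a projective cover in $\Mod(H^0(A))$, is an epimorphism.

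Next I would splice the $n+1$ triangles. The composite $X_0\to X_1[1]\to\cdots\to X_{n+1}[n+1]$ of the shifted connecting maps $X_i[i]\to X_{i+1}[i+1]$ has, by repeated use of the octahedral axiom (equivalently, the associativity of $\ast$ recalled above), a cone which is an iterated extension of $Q_0[1],Q_1[2],\dots,Q_n[n+1]$; shifting back by $[-1]$ yields a triangle $P\xrightarrow{a}X\to X_{n+1}[n+1]\to P[1]$ with $P\in\add(A)\ast\add(A)[1]\ast\cdots\ast\add(A)[n]=\per(A)^{[-n,0]}$.

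It then remains to compare truncations. Since $X_{n+1}\in\fd(A)^{\leq 0}$, we have $H^{j}(X_{n+1}[n+1])=H^{j+n+1}(X_{n+1})=0$ for all $j\geq -n$; hence in the long exact cohomology sequence of $P\xrightarrow{a}X\to X_{n+1}[n+1]\to P[1]$ both flanking terms $H^{j-1}(X_{n+1}[n+1])$ and $H^{j}(X_{n+1}[n+1])$ vanish as soon as $j>-n$, so $H^{j}(a)$ is an isomorphism in all those degrees. As $\trunc^{>-n}P$ and $\trunc^{>-n}X$ both lie in $\fd(A)^{>-n}$, where a morphism inducing isomorphisms on all cohomology is invertible, $\trunc^{>-n}(a)$ is the desired isomorphism $\trunc^{>-n}P\cong\trunc^{>-n}X$; for the last assertion one specialises to $n=d$ and uses that $X\in\Hc{d}\subseteq\fd(A)^{>-d}$ satisfies $\trunc^{>-d}X\cong X$. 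I expect the only genuinely delicate point to be this degree count: one must take exactly $n+1$ covers $Q_0,\dots,Q_n$ so that $X_{n+1}$ gets shifted by $[n+1]$ and both neighbouring cohomology groups of $X_{n+1}[n+1]$ die throughout the range $j>-n$; taking only $n$ covers would produce the weaker statement with $\trunc^{>-(n-1)}$ and an object of $\per(A)^{[-(n-1),0]}$.
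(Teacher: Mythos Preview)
Your proof is correct and follows essentially the same approach as the paper: both iterate derived projective covers $n+1$ times, splice the resulting triangles via the octahedral axiom into a single triangle $P\to X\to \Omega^{n+1}X[n+1]\to P[1]$ with $P\in\per(A)^{[-n,0]}$, and then read off the isomorphism on $\trunc^{>-n}$ from the long exact cohomology sequence. Your explicit verification that each successive syzygy $X_{i+1}$ stays in $\fd(A)^{\leq 0}$ (using that $H^0(p_i)$ is an epimorphism) is a detail the paper asserts but does not spell out, and your closing remark on the degree count is a helpful sanity check.
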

	
    \begin{proof}
		There exists a derived projective cover $p^0\colon P^0\to X$ and hence $\Omega_A^1(X)\in\fd(A)^{\leq 0}$ such that there is a triangle
		\[\Omega_A^1(X)\overset{\iota^1}{\longrightarrow} P^0 \overset{p^0}{\longrightarrow} X \longrightarrow \Omega_A^1(X)[1]\]
		in $\fd(A)$. Inductively, we take derived projective covers $p^{-i}\colon P^{-i}\to \Omega_A^i(X)$ and objects ${\Omega_A^{i+1}(X)\in\fd(A)^{\leq 0}}$ such that there is a triangle
		\[\Omega_A^{i+1}(X)\overset{\iota^{i+1}}{\longrightarrow} P^{-i} \overset{p^{-i}}{\longrightarrow} \Omega_A^{i}(X) \longrightarrow \Omega_A^{i+1}(X)[1].\] 
		Define $P^{[-1,0]}\coloneqq\cone(\iota^1\circ p^{-1})\in\per(A)^{[-1,0]}$, and inductively by using the octahedral axiom $P^{[-(i+1),0]}\coloneqq\cone(g^{i}\circ p^{-(i+1)})\in \per(A)^{[-(i+1),0]}$.
		\[\adjustbox{scale=0.712}{
			\begin{tikzcd}
				{P^{-1}} & {\Omega_A^1(X)} & {\Omega_A^2(X)[1]} & {P^{-1}[1]}\\
				{P^{-1}} & {P^{0}} & {P^{[-1,0]}} & {P^{-1}[1]} \\
				& X & X  \\
				& {\Omega_A^1(X)[1]} & {\Omega_A^2(X)[2]}
				\arrow["{p^{-1}}", from=1-1, to=1-2]
                \arrow[from=1-2, to=1-3]
				\arrow[from=1-3, to=1-4]
                \arrow[equals,from=1-1, to=2-1]
				\arrow["{\iota^1}", from=1-2, to=2-2]
				\arrow["{g^1}", dashed, from=1-3, to=2-3]
                \arrow[from=1-4, to=2-4]
				\arrow[from=2-1, to=2-2]
				\arrow[from=2-2, to=2-3]
				\arrow[from=2-3, to=2-4]
				\arrow[from=2-2, to=3-2]
				\arrow["{f^1}", dashed, from=2-3, to=3-3]
                \arrow[equals, from=3-2, to=3-3]
				\arrow[from=3-2, to=4-2]
				\arrow[dashed, from=3-3, to=4-3]
                \arrow[from=4-2, to=4-3]
			\end{tikzcd}
		} \quad 
		\adjustbox{scale=0.712}{
			\begin{tikzcd}
				{P^{-(i+1)}[i]} & {\Omega_A^{i+1}(X)[i]} & {\Omega_A^{i+2}(X)[i+1]} & {P^{-(i+1)}[i+1]} \\
				{P^{-(i+1)}[i]} & {P^{[-i,0]}} & {P^{[-(i+1),0]}} & {P^{-(i+1)}[i+1]} \\
				& X & X \\
				& {\Omega_A^{i+1}(X)[i+1]} & {\Omega_A^{i+2}(X)[i+2]}
				\arrow["{p^{-(i+1)}}", from=1-1, to=1-2]
                \arrow[from=1-2, to=1-3]
				\arrow[from=1-3, to=1-4]
                \arrow[equals, from=1-1, to=2-1]
				\arrow["{g^i}", from=1-2, to=2-2]
				\arrow["{g^{i+1}}", dashed, from=1-3, to=2-3]
                \arrow[from=1-4, to=2-4]
				\arrow[from=2-1, to=2-2]
				\arrow[from=2-2, to=2-3]
				\arrow[from=2-3, to=2-4]
				\arrow["{f^i}", from=2-2, to=3-2]
				\arrow["{f^{i+1}}", dashed, from=2-3, to=3-3]
				\arrow[equals,from=3-2, to=3-3]
                \arrow[from=3-2, to=4-2]
				\arrow[dashed, from=3-3, to=4-3]
                \arrow[from=4-2, to=4-3]
			\end{tikzcd}
		}
		\]
		For $i=n$ we obtain a triangle
		\[\Omega_A^{n+1}(X)[n]\overset{g^{n}}{\longrightarrow} P^{[-n,0]} \overset{f^{n}}{\longrightarrow} X \longrightarrow \Omega_A^{n+1}(X)[n+1].\]
		As $\Omega_A^{n+1}(X)[n]\in\fd(A)^{\leq -n}$, the long exact sequence in cohomology yields that $H^{k}(f^{n})$ is an isomorphism for $k>-n$. Consequently, $\trunc^{>-n}(f^n):\trunc^{>-n}P^{[-n,0]}\stackrel{\sim}\to\trunc^{>-n}X$ is an isomorphism.
    \end{proof}
	
    The following lemma justifies the terminology $(d+1)$-term perfect dg $A$-module.
	
    \begin{lemma}\label{ProjDimd}
		Let $n\in\mathbb{N}$, $P\in\per(A)^{[-n,0]}$ and $Y\in\fd(A)^{\leq 0}$. Then $\Hom_A(P,Y[k])=0$ for $k>n$.
    \end{lemma}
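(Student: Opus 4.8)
The claim is that for $P \in \per(A)^{[-n,0]}$ and $Y \in \fd(A)^{\leq 0}$ one has $\Hom_A(P, Y[k]) = 0$ whenever $k > n$. The natural approach is induction on $n$, using the defining filtration $\per(A)^{[-n,0]} = \add(A) \ast \add(A)[1] \ast \cdots \ast \add(A)[n]$.

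\emph{Base case $n = 0$.} Here $P \in \add(A)$, so it suffices to treat $P = A$. Then $\Hom_A(A, Y[k]) \cong H^k(Y)$, which vanishes for $k > 0$ since $Y \in \fd(A)^{\leq 0}$. More generally, for any $P \in \add(A)$ the same holds by additivity.

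\emph{Inductive step.} Suppose the statement holds for $n - 1$. Given $P \in \per(A)^{[-n,0]} = \per(A)^{[-(n-1),0]} \ast \add(A)[n]$, there is a triangle
\[
P' \longrightarrow P \longrightarrow Q[n] \longrightarrow P'[1]
\]
with $P' \in \per(A)^{[-(n-1),0]}$ and $Q \in \add(A)$. Apply $\Hom_A(-, Y[k])$ to get the exact sequence
\[
\Hom_A(P'[1], Y[k]) \longrightarrow \Hom_A(Q[n], Y[k]) \longrightarrow \Hom_A(P, Y[k]) \longrightarrow \Hom_A(P', Y[k]).
\]
For $k > n$: the last term is $\Hom_A(P', Y[k])$, which vanishes by the induction hypothesis since $k > n > n-1$. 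The first term is $\Hom_A(P', Y[k-1])$, which vanishes by the induction hypothesis since $k - 1 \geq n > n - 1$. The middle term $\Hom_A(Q[n], Y[k]) \cong \Hom_A(Q, Y[k-n])$ vanishes by the base case since $k - n \geq 1 > 0$ and $Q \in \add(A)$. Hence the outer two terms surrounding $\Hom_A(P, Y[k])$ in the exactness four-term sequence both vanish, forcing $\Hom_A(P, Y[k]) = 0$.

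\emph{Main obstacle.} There is no serious obstacle here; the only point requiring care is bookkeeping of the degree shifts to ensure that at each stage the exponents genuinely fall into the range covered by the inductive hypothesis and the base case — in particular checking that $k - 1 \geq n$ (not merely $> n-1$) when one needs to apply the hypothesis at level $n-1$, which is exactly what $k > n$ gives. One should also remark at the outset that $\per(A)^{[-n,0]}$ sits inside $\fd(A)^{\leq 0}$, or at least that the triangle above exists with the stated membership, which is immediate from associativity of $\ast$ and the definition.
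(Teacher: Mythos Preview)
Your proof is correct and follows exactly the same approach as the paper: induction on $n$ via the triangle $P' \to P \to P^{-n}[n] \to P'[1]$ with $P' \in \per(A)^{[-(n-1),0]}$ and $P^{-n} \in \add(A)$, then applying $\Hom_A(-,Y[k])$. The paper's version is just more terse; your additional bookkeeping of the $\Hom_A(P'[1],Y[k])$ term is harmless but not needed, since the vanishing of the two terms flanking $\Hom_A(P,Y[k])$ already suffices.
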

	
    \begin{proof}
		For $n=0$ there is nothing to show. For $n>0$, by definition of $\per(A)^{[-n,0]}$ there is a triangle ${P'\to P\to P^{-n}[n]\to P'[1]}$ with $P'\in\per(A)^{[-(n-1),0]}$, and $P^{-n}\in\add(A)$. Applying $\Hom_A(-,Y[k])$ to this triangle and induction yields the claim.
    \end{proof}
	
    \begin{lemma}\label{TruncIso}
		Let $P\in\per(A)^{(-d,0]}$ and $Y\in\fd(A)$. Then $\trunc^{>-d}$ induces an isomorphism 
		\[\trunc^{>-d}\colon\Hom_A(P,Y)\stackrel{\cong}\longrightarrow \Hom_A(\trunc^{>-d}P,\trunc^{>-d}Y).\]
    \end{lemma}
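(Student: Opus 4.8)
The plan is to reduce the statement to the adjunction $(\trunc^{\leq n},\iota^{\leq n})$ together with the vanishing supplied by \cref{ProjDimd}. Write $n=d-1$, so that $P\in\per(A)^{[-(d-1),0]}=\per(A)^{(-d,0]}$. Since $\trunc^{>-d}Y=\trunc^{\geq -(d-1)}Y$ and there is a canonical triangle
\[
\trunc^{\leq -d}Y\longrightarrow Y\longrightarrow \trunc^{>-d}Y\longrightarrow \trunc^{\leq -d}Y[1]
\]
in $\fd(A)$, applying $\Hom_A(P,-)$ to it gives an exact sequence whose outer terms are $\Hom_A(P,\trunc^{\leq -d}Y[j])$ for $j=0,1$. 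Because $\trunc^{\leq -d}Y\in\fd(A)^{\leq -d}$, that is $\trunc^{\leq -d}Y[-d]\in\fd(A)^{\leq 0}$ up to the appropriate shift — more precisely $\trunc^{\leq -d}Y[\,\cdot\,]$ lands in $\fd(A)^{\leq 0}$ after shifting by $d$ and by $d-1$ respectively — \cref{ProjDimd} with $P\in\per(A)^{[-(d-1),0]}$ forces both of these groups to vanish (the shifts needed, $d$ and $d+1$, both exceed $n=d-1$). Hence the natural map $\Hom_A(P,Y)\to\Hom_A(P,\trunc^{>-d}Y)$ induced by $Y\to\trunc^{>-d}Y$ is an isomorphism.

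It therefore suffices to show that $\trunc^{>-d}$ induces an isomorphism $\Hom_A(P,\trunc^{>-d}Y)\xrightarrow{\ \cong\ }\Hom_A(\trunc^{>-d}P,\trunc^{>-d}Y)$; equivalently, that precomposition with the truncation morphism $P\to\trunc^{>-d}P$ is bijective on $\Hom_A(-,\trunc^{>-d}Y)$. Dualising the first step, apply $\Hom_A(-,\trunc^{>-d}Y)$ to the triangle $\trunc^{\leq -d}P\to P\to\trunc^{>-d}P\to\trunc^{\leq -d}P[1]$. The obstruction terms are now $\Hom_A(\trunc^{\leq -d}P[j],\trunc^{>-d}Y)$ for $j=0,1$. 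Here one uses that $\trunc^{\leq -d}P\in\fd(A)^{\leq -d}$ while $\trunc^{>-d}Y\in\fd(A)^{\geq -(d-1)}=\fd(A)^{>-d}$, so these $\Hom$-groups are between objects separated by the standard $t$-structure and vanish: concretely $\Hom_A(\trunc^{\leq -d}P[j],\trunc^{>-d}Y)=0$ for $j\leq 1$ since $\trunc^{\leq -d}P[j]\in\fd(A)^{\leq -d+j}\subseteq\fd(A)^{\leq -d+1}=\fd(A)^{<-d+2}$ and $\trunc^{>-d}Y\in\fd(A)^{\geq -d+1}$ meet only in the boundary, forcing the groups for $j=0$ and, after checking degrees, also $j=1$ to be zero. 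Composing the two isomorphisms and noting that their composite is exactly the map induced by $\trunc^{>-d}$ gives the claim.

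The main obstacle is a bookkeeping one: making sure the relevant shifts really do fall outside the ranges where the vanishing lemmas apply, in particular the $j=1$ case $\Hom_A(\trunc^{\leq -d}P[1],\trunc^{>-d}Y)$, which is the tightest. One must verify $\trunc^{\leq -d}P[1]\in\fd(A)^{\leq -d+1}$ and $\trunc^{>-d}Y\in\fd(A)^{\geq -d+1}$ give vanishing; since $\fd(A)^{\leq m}$ and $\fd(A)^{\geq m}$ intersect only in objects whose cohomology is concentrated in degree $m$, and the Hom from a $\leq m$ object to a $\geq m$ object computes precisely $\Hom_{\mathcal H}$ of the degree-$m$ cohomologies with no higher terms, one checks this is $\Hom_{\mathcal H}(H^{-d+1}(\trunc^{\leq -d}P[1]),H^{-d+1}(Y))$ — but $H^{-d+1}(\trunc^{\leq -d}P[1])=H^{-d}(\trunc^{\leq -d}P)$ need not vanish, so one instead argues via the factorisation through $\fd(A)^{\leq -d}$: any map $\trunc^{\leq -d}P[1]\to\trunc^{>-d}Y$ factors through $\trunc^{\leq -d+1}(\trunc^{>-d}Y)$, which is concentrated in degree $-d+1$, and then through the connecting map of the truncation triangle for $P$, which one checks is already killed. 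I expect this degree chase to be the only non-formal point; everything else is the two-triangle argument above.
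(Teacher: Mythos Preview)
Your overall approach matches the paper's: first use the truncation triangle for $Y$ together with \cref{ProjDimd} to get $\Hom_A(P,Y)\cong\Hom_A(P,\trunc^{>-d}Y)$, then identify the latter with $\Hom_A(\trunc^{>-d}P,\trunc^{>-d}Y)$. The paper dispatches the second step in one line by invoking the adjunction $\trunc^{>-d}\dashv\iota^{>-d}$; you instead unpack this via the truncation triangle for $P$, which is legitimate but leads you into a phantom difficulty.

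The ``obstacle'' you flag in the last paragraph stems from a bookkeeping error in the shift direction. If $X\in\fd(A)^{\leq -d}$ then $X[1]\in\fd(A)^{\leq -d-1}$, not $\fd(A)^{\leq -d+1}$: the shift $[1]$ moves cohomology to \emph{lower} degrees, since $H^i(X[1])=H^{i+1}(X)$. Hence $\trunc^{\leq -d}P[1]\in\fd(A)^{\leq -d-1}$, and both obstruction terms $\Hom_A(\trunc^{\leq -d}P[j],\trunc^{>-d}Y)$ for $j=0,1$ vanish immediately by the $t$-structure orthogonality $\Hom_A(\fd(A)^{\leq -d},\fd(A)^{>-d})=0$. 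The $j=1$ case is actually \emph{easier} than $j=0$, not tighter, so your final paragraph and its incomplete factorisation argument can be deleted entirely.
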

	
    \begin{proof}
		First apply $\Hom_A(P,-)$ to the canonical triangle $\trunc^{\leq -d}Y\to Y\to \trunc^{>-d}Y\to \trunc^{\leq -d}Y[1]$ to obtain
		\[0=\Hom_A(P,\trunc^{\leq -d}Y)\to \Hom_A(P,Y)\to \Hom_A(P,\trunc^{>-d}Y)\to \Hom_A(P,\trunc^{\leq -d}Y[1])=0,\]
		where the first and the last equality follow by \cref{ProjDimd}. The exactness of the above sequence yields ${\Hom_A(P,Y)\stackrel{\cong}\to \Hom_A(P,\trunc^{>-d}Y)}$. 
		
		Finally, $\trunc^{>-d}$ is defined by the following commutative diagram, where the second isomorphism comes from the adjunction $\trunc^{>-d}\dashv \iota^{>-d}$. 
		\[\begin{tikzcd}
			{\Hom_A(P,Y)} & {\Hom_A(P,\trunc^{>-d}Y)} \\
			{\Hom_A(P,Y)} & {\Hom_A(\trunc^{>-d}P,\trunc^{>-d}Y)}
			\arrow["\cong", from=1-1, to=1-2]
			\arrow[equals,from=1-1, to=2-1]
			\arrow["\cong", from=1-2, to=2-2]
			\arrow["{\trunc^{>-d}}", from=2-1, to=2-2]
		\end{tikzcd}\]
    \end{proof}
	
    We are now ready to prove \cref{d-term-Equiv}.
    \begin{proof}[Proof of \cref{d-term-Equiv}]
		That the functor $\trunc^{>-d}$ is dense, is precisely \cref{d-res-lemma}.
		
		The functor $\trunc^{>-d}$ is full: Let $P,P'\in\per(A)^{[-d,0]}$. Applying $\Hom_A(P,-)$ to the canonical triangle ${\trunc^{\leq -d}P'\to P'\to \trunc^{>-d}P'\to\trunc^{\leq -d}(P')[1]}$ and \cref{ProjDimd} yield the following exact sequence 
		\[\Hom_A(P,P')\to\Hom_A(P,\trunc^{>-d}P')\to\Hom_A(P,\trunc^{\leq -d}(P')[1])=0.\]
		Thus the map 
		\[\Hom_A(P,P')\to \Hom_A(P,\trunc^{>-d}P')\cong \Hom_A(\trunc^{>-d}P,\trunc^{>-d}P')\] 
		is surjective.
		
		Finally, we need to determine the kernel of the map. Clearly, all maps factoring through $\add(A[d])$ are in the kernel. We need to show that the converse holds as well. Hence, let $f\colon P\to P'$ be a morphism in $\per(A)^{[-d,0]}$ such that $\trunc^{>-d}f=0$. By definition of $\per(A)^{[-d,0]}$ there exists a triangle 
		\[P^{d-1}\overset{\iota}{\longrightarrow} P\overset{\pi}{\longrightarrow} P^{-d}[d]\longrightarrow P^{d-1}[1]\] 
		with $P^{-d}\in\add(A)$ and $P^{d-1}\in\per(A)^{(-d,0]}$. Applying the functor $\Hom_A(-,P')$ and \cref{TruncIso} we get the following commutative diagram, with exact top row:
		\[\begin{tikzcd}
			{\Hom_A(P^{-d}[d],P')} & {\Hom_A(P,P')} & {\Hom_A(P^{d-1},P')} \\
			& {\Hom_A(\trunc^{>-d}P,\trunc^{>-d}P')} & {\Hom_A(\trunc^{>-d}P^{d-1},\trunc^{>-d}P')}
			\arrow["{\pi^*}",from=1-1, to=1-2]
			\arrow["{\iota^*}",from=1-2, to=1-3]
			\arrow["\trunc^{>-d}"',from=1-2, to=2-2]
			\arrow["\trunc^{>-d}"',"\cong", from=1-3, to=2-3]
			\arrow["{(\trunc^{>-d}\iota)^*}",from=2-2, to=2-3]
		\end{tikzcd}\]
		We know that $f\in\Hom_A(P,P')$ is sent to $0$ by $\trunc^{>-d}$. By commutativity, it follows that ${f\in\Ker(\iota^*)=\Ima(\pi^*)}$, \emph{i.e.} $f$ factors through $P^{-d}[d]$.
    \end{proof}
	
    As the functor $\trunc^{>-d}$ induces an equivalence $\add(A)\stackrel{\sim}{\to}\add(A)=\Proj(\Hc{d})$, by the isomorphism theorems for additive categories we get the following corollary.
	
    \begin{coro}\label{d-equiv-stable}
		The functor $\trunc^{>-d}$ induces an equivalence
		\[\underline{\trunc^{>-d}}\colon \frac{\per(A)^{[-d,0]}}{\add\left(A[d]\oplus A\right)}\stackrel{\sim}{\longrightarrow} \underline{\Hc{d}}.\]
    \end{coro}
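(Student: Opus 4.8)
The plan is to deduce this from \cref{d-term-Equiv} by passing to one further additive quotient on each side. By \cref{EnoughPI} we have $\proj(\Hc{d})=\add(A)$, so $\underline{\Hc{d}}=\Hc{d}/\add(A)$. Moreover, a morphism in $\per(A)^{[-d,0]}$ factors through $\add(A[d]\oplus A)$ if and only if it is a sum of a morphism factoring through $\add(A[d])$ and one factoring through $\add(A)$; hence the ideal of morphisms killed in $\per(A)^{[-d,0]}/\add(A[d]\oplus A)$ is the sum of the corresponding two ideals, and there is a tautological identification of additive categories
\[\frac{\per(A)^{[-d,0]}}{\add(A[d]\oplus A)}\;=\;\frac{\,\per(A)^{[-d,0]}\big/\add(A[d])\,}{\add(A)},\]
where on the right $\add(A)$ denotes the image of $\add(A)\subseteq\per(A)^{[-d,0]}$ in the quotient by $\add(A[d])$.

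Now I apply the isomorphism theorems for additive categories. By \cref{d-term-Equiv} the functor $\trunc^{>-d}$ induces an equivalence $\per(A)^{[-d,0]}/\add(A[d])\stackrel{\sim}{\longrightarrow}\Hc{d}$, and under it the full subcategory $\add(A)$ goes to its essential image, which is precisely $\add(A)=\proj(\Hc{d})$, since $\trunc^{>-d}A'\cong A'$ for every $A'\in\add(A)\subseteq\fd(A)^{>-d}$. An equivalence of additive categories descends to an equivalence between the quotients by a full subcategory and by its essential image; combined with the identification above this yields the desired equivalence
\[\underline{\trunc^{>-d}}\colon\frac{\per(A)^{[-d,0]}}{\add(A[d]\oplus A)}\;\stackrel{\sim}{\longrightarrow}\;\frac{\Hc{d}}{\proj(\Hc{d})}\;=\;\underline{\Hc{d}}.\]

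If one prefers to verify this by hand, well-definedness of $\underline{\trunc^{>-d}}$ is clear, as a morphism factoring through $\add(A[d])$ is sent to $0$ already in $\Hc{d}$ and a morphism factoring through $\add(A)=\proj(\Hc{d})$ is sent to $0$ in $\underline{\Hc{d}}$, while density and fullness are inherited from \cref{d-term-Equiv} by composing with the quotient functor $\Hc{d}\to\underline{\Hc{d}}$. The one step with genuine content — and hence the main point to get right — is faithfulness: if $f\colon P\to P'$ in $\per(A)^{[-d,0]}$ is such that $\trunc^{>-d}f$ factors as $\trunc^{>-d}P\overset{a}{\to}Q\overset{b}{\to}\trunc^{>-d}P'$ with $Q\in\add(A)$, then, since $\trunc^{>-d}$ is full and $Q=\trunc^{>-d}Q$, one lifts $a$ and $b$ to $\tilde a\colon P\to Q$ and $\tilde b\colon Q\to P'$; now $\trunc^{>-d}(f-\tilde b\tilde a)=0$, so $f-\tilde b\tilde a$ factors through $\add(A[d])$ by \cref{d-term-Equiv}, and therefore $f$ factors through $\add(A[d]\oplus A)$. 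Beyond this lifting-and-correcting step and the bookkeeping with the iterated additive quotient, everything rests formally on \cref{d-term-Equiv} and \cref{EnoughPI}, so I anticipate no serious difficulty.
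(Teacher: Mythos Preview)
Your proposal is correct and follows essentially the same approach as the paper: the paper simply notes that $\trunc^{>-d}$ restricts to an equivalence $\add(A)\stackrel{\sim}{\to}\add(A)=\proj(\Hc{d})$ and then invokes the isomorphism theorems for additive categories to pass from \cref{d-term-Equiv} to the further quotient. Your write-up spells out more of the details (the iterated-quotient identification and the explicit faithfulness check), but the underlying idea is identical.
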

	
    We get the following dual statement of \cref{d-term-Equiv} and \cref{d-equiv-stable}. 
	
    \begin{prop}\label{d-equiv-costable}
		The functor 
		\[[d-1]\circ \trunc^{<d}\colon\thick(\DD A)^{[0,d]}\longrightarrow \Hc{d}\] 
		induces equivalences
		\[[d-1]\circ\trunc^{<d}\colon \frac{\thick(\DD A)^{[0,d]}}{\add\left(\DD A[-d]\right)}\stackrel{\sim}{\longrightarrow} \Hc{d} ,\quad \quad
		\overline{[d-1]\circ\trunc^{<d}}\colon \frac{\thick(\DD A)^{[0,d]}}{\add\left(\DD A[-d]\oplus \DD A\right)}\stackrel{\sim}{\longrightarrow} \overline{\Hc{d}}\]
    \end{prop}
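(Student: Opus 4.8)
The plan is to deduce both equivalences from \cref{d-term-Equiv} and \cref{d-equiv-stable}, applied to the opposite dg algebra $A^{op}$, by transporting them along the $k$-linear duality $\DD$ together with a shift. Note first that $A^{op}$ is again proper and connective and that $H^{*}(A^{op})$ has the same cohomological amplitude as $H^{*}(A)$, so that $A^{op}$ lies in its own extended heart $\mathcal{H}^{d}_{A^{op}}\coloneqq\fd(A^{op})^{\leq 0}\cap\fd(A^{op})^{>-d}$; hence \cref{d-term-Equiv} and \cref{d-equiv-stable} apply to $A^{op}$ verbatim.

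The second step is to record how $\DD\colon\fd(A^{op})\to\fd(A)$ interacts with the structures at hand. From $H^{i}(\DD M)\cong\DD H^{-i}(M)$ one reads off that the contravariant equivalence $\DD$ exchanges $\fd(A^{op})^{\leq n}$ with $\fd(A)^{\geq -n}$ and $\fd(A^{op})^{\geq n}$ with $\fd(A)^{\leq -n}$, hence $\DD\circ\trunc^{\geq m}_{A^{op}}\cong\trunc^{\leq -m}_{A}\circ\DD$; in particular $\DD\circ\trunc^{>-d}_{A^{op}}\cong\trunc^{<d}_{A}\circ\DD$, and $\DD$ carries $\mathcal{H}^{d}_{A^{op}}$ onto $\fd(A)^{\geq 0}\cap\fd(A)^{<d}=\Hc{d}[1-d]$, the shift of the extended heart. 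Moreover, since $\DD(N[j])\cong(\DD N)[-j]$, since $\DD$ reverses $\ast$ (a triangle $X\to T\to Y\to X[1]$ is sent to $\DD Y\to\DD T\to\DD X\to(\DD Y)[1]$), and since $\DD$ sends the free right dg $A^{op}$-module to $\DD A$, the functor $\DD$ restricts to contravariant equivalences $\per(A^{op})\stackrel{\sim}{\to}\thick(\DD A)$ and $\per(A^{op})^{[-d,0]}\stackrel{\sim}{\to}\thick(\DD A)^{[0,d]}$ that send $\add(A^{op}[d])$ to $\add(\DD A[-d])$ and $\add(A^{op})$ to $\add(\DD A)$.

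Given these compatibilities the transport is routine. Pre- and post-composing the covariant equivalence $\trunc^{>-d}_{A^{op}}\colon\per(A^{op})^{[-d,0]}/\add(A^{op}[d])\stackrel{\sim}{\to}\mathcal{H}^{d}_{A^{op}}$ of \cref{d-term-Equiv} with $\DD$ and a quasi-inverse produces a covariant equivalence $\thick(\DD A)^{[0,d]}/\add(\DD A[-d])\stackrel{\sim}{\to}\Hc{d}[1-d]$, which by the relation $\DD\circ\trunc^{>-d}_{A^{op}}\cong\trunc^{<d}_{A}\circ\DD$ is the functor induced by $\trunc^{<d}$ (well defined on the quotient, as $\trunc^{<d}$ annihilates $\add(\DD A[-d])$ since $\DD A[-d]\in\fd(A)^{\geq d}$); composing it with the shift equivalence $[d-1]\colon\Hc{d}[1-d]\stackrel{\sim}{\to}\Hc{d}$ gives the first asserted equivalence. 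Transporting \cref{d-equiv-stable} for $A^{op}$ in exactly the same manner gives the second equivalence, once one notes that $\add(A^{op})$ corresponds to $\add(\DD A)$, that $[d-1]\circ\trunc^{<d}$ sends $\add(\DD A)\subseteq\thick(\DD A)^{[0,d]}$ to $\add(\DD A[d-1])$, and that $\add(\DD A[d-1])=\inj(\Hc{d})$ by \cref{EnoughPI}; so the additional ideal quotient on the source corresponds precisely to the passage to $\overline{\Hc{d}}$.

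The only genuine work is bookkeeping: keeping track of how $\DD$ reverses $\ast$ and negates shifts (so that the co-$t$-structure interval $\per(A^{op})^{[-d,0]}$ becomes $\thick(\DD A)^{[0,d]}$), of the degree shift relating $\trunc^{>-d}_{A^{op}}$ with $\trunc^{<d}_{A}$, and of the identification $\proj(\mathcal{H}^{d}_{A^{op}})=\add(A^{op})\leftrightarrow\add(\DD A[d-1])=\inj(\Hc{d})$. An alternative that avoids passing to $A^{op}$ is to rerun the proofs of \cref{d-res-lemma}, \cref{ProjDimd}, \cref{TruncIso}, \cref{d-term-Equiv} and \cref{d-equiv-stable} dually, replacing derived projective covers by derived injective envelopes (available by \cref{EnoughInj}), $\trunc^{>-d}$ by $[d-1]\circ\trunc^{<d}$ and $\per(A)^{[-d,0]}$ by $\thick(\DD A)^{[0,d]}$, and reversing all triangles; there the dual of \cref{ProjDimd} follows by the same induction using the identity $\Hom_A(Y,\DD A[k])\cong\DD H^{-k}(Y)$ (a case of the relative Serre duality \eqref{AR-formula}) for the base case, and the dual of \cref{TruncIso} from the adjunction $\iota^{\leq d-1}\dashv\trunc^{\leq d-1}$.
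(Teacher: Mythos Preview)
Your proposal is correct and follows essentially the same route as the paper: apply \cref{d-term-Equiv} and \cref{d-equiv-stable} to $A^{op}$ and transport the resulting equivalences along the $t$-exact duality $\DD$ (together with the shift $[d-1]$), tracking that $\DD$ exchanges $\per(A^{op})^{[-d,0]}$ with $\thick(\DD A)^{[0,d]}$, intertwines $\trunc^{>-d}_{A^{op}}$ with $\trunc^{<d}_{A}$, and sends $\add(A^{op}),\add(A^{op}[d])$ to $\add(\DD A),\add(\DD A[-d])$. Your bookkeeping is in fact more explicit than the paper's sketch, and the alternative you outline (redoing \cref{d-res-lemma}--\cref{d-equiv-stable} with derived injective envelopes) is the direct dualisation the paper alludes to but does not spell out.
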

	
    \begin{proof}
		As the proof is dual to the one of \cref{d-term-Equiv} and \cref{d-equiv-stable} we only give a sketch of the dualising procedure. The functor $\DD:\fd(A)^{op}\stackrel{\sim}{\to}\fd(A^{op})$ is $t$-exact with respect to the opposite $t$-structure of the standard $t$-structure on $\fd(A)$, denoted by $t_{op}$, and the standard $t$-structure on $\fd(A^{op})$. Moreover, $\DD A$ is sent to $A^{op}$ by $\DD$. By $t$-exactness we obtain the following commutative diagram, where the equivalence on the right follows from \cref{d-term-Equiv} applied to $A^{op}$:
		\[\begin{tikzcd}
			& {\fd(A)^{op}} & {\fd(A^{op})} \\
			{\frac{\left(\thick(\DD A)^{[0,d]}\right)^{op}}{\add(\DD A[-d])^{op}}} & {\frac{\thick(\DD A)^{[-d,0]_{op}}}{\add(\DD A[d]_{op})}} & {\frac{\per(A^{op})^{[-d,0]}}{\add(A^{op}[d])}} \\
			& {\Hc{d}[-d+1)]^{op}} & {\mathcal{H}^{d}_{A^{op}}}
			\arrow["\DD"', from=1-2, to=1-3]
			\arrow["\sim", draw=none, from=1-2, to=1-3]
			\arrow["{(\trunc^{<d})^{op}}"', from=2-1, to=3-2]
			\arrow[phantom, sloped, "\subseteq", from=2-2, to=1-2]
			\arrow[equals, from=2-2, to=2-1]
			\arrow["\sim", from=2-2, to=2-3]
			\arrow["\DD"', draw=none, from=2-2, to=2-3]
			\arrow["{\trunc^{>-d}_{op}}", from=2-2, to=3-2]
			\arrow[phantom, sloped, "\subseteq", from=2-3, to=1-3]
			\arrow["{\trunc^{>-d}_{A^{op}}}",,from=2-3, to=3-3]
			\arrow["{\sim}"', from=2-3, to=3-3]
			\arrow["{\sim}", from=3-2, to=3-3]
			\arrow["{\DD}"', draw=none, from=3-2, to=3-3]
		\end{tikzcd}\]
        In the diagram above, $[d]_{op}$ and $\thick(\DD A)^{[-d,0]_{op}}$ denote the shift functor and the interval in the opposite triangulated category ${\fd(A)^{op}}$ with the opposite $t$-structure. It follows that $\trunc^{<d}$ induces equivalences
		\[\trunc^{<d}\colon \frac{\thick(\DD A)^{[0,d]}}{\add\left(\DD A[-d]\right)}\stackrel{\sim}{\longrightarrow} \Hc{d} [-d+1] ,\quad \quad 
		[d-1]\circ\trunc^{<d}\colon \frac{\thick(\DD A)^{[0,d]}}{\add\left(\DD A[-d]\right)}\stackrel{\sim}{\longrightarrow} \Hc{d} .\]
		As $[d-1]\circ t^{<d}$ induces an equivalence $\add(\DD A)\stackrel{\sim}{\longrightarrow}\add(\DD A[d-1])=\inj(\Hc{d})$ we get the second claimed equivalence.
    \end{proof}

    Recall from (\ref{TauDef}) that $\define{\tau}\colon\underline{\Hc{d}}\to \overline{\Hc{d}}$ is defined by the commutative diagram:
    \[\begin{tikzcd}
		{\frac{\per(A)^{[-d,0]}}{\add\left(A[d]\oplus A\right)}} & {\frac{\thick(\DD A)^{[0,d]}}{\add\left(\DD A[-d]\oplus \DD A\right)}} \\
		{\underline{\Hc{d}}} & {\overline{\Hc{d}}}
		\arrow["{[-d]\circ \nu_A}", from=1-1, to=1-2]
		\arrow["\sim"', draw=none, from=1-1, to=1-2]
		\arrow["{(\underline{\trunc^{>-d}})^{-}}"', from=2-1, to=1-1]
		\arrow["\sim", draw=none, from=2-1, to=1-1]
		\arrow["{\overline{[d-1]\circ\trunc^{<d}}}", from=1-2, to=2-2]
		\arrow["\sim"', draw=none, from=1-2, to=2-2]
		\arrow["\tau", from=2-1, to=2-2]
    \end{tikzcd}\]

    \begin{proof}[Proof of \cref{MainTheorem}]
		We need to prove that there exists a pair $(\tau,\eta)$ of a $k$-linear equivalence $\tau\colon\underline{\Hc{d}}\to \overline{\Hc{d}}$ and a binatural isomorphism 
		\[\eta_{X,Y}\colon\underline{\Hc{d}}(X,Y)\cong \DD\Hom_A(Y,\tau X[1]) \quad \text{for any }X,Y\in{\Hc{d}}.\]
		As all involved functors in the definition of $\tau$ are $k$-linear equivalences, $\tau$ is a $k$-linear equivalence. It remains to define $\eta$.
		
		Let $X,Y\in \Hc{d}$, $P_X\coloneqq(\underline{\trunc^{>-d}})^{-}X$, and $I_X\coloneqq\nu(P_X)$. By definition of $\tau$ we get the isomorphism $\tau(X)=\trunc^{<d}(I_X[-d])[d-1]\cong\trunc^{<0}(I_X)[-1]$ and thus we obtain the canonical triangle in $\fd(A)$:
		\begin{equation}\label{TauTriangle}
			\tau(X)\longrightarrow I_X[-1]\longrightarrow \trunc^{\geq 0}(I_X)[-1]\longrightarrow \tau(X)[1].
		\end{equation}
		Fix a derived projective cover $p_Y\colon P_Y^0\to Y$ of $Y$. By \cref{StableHom}, there is an exact sequence
		\[0 \leftarrow {\underline{\Hc{d}}(X,Y)} \leftarrow {\Hom_A(X,Y)} \leftarrow {\Hom_A(X,P_Y^0)}.\]
		Moreover, applying $\DD\Hom_A(Y,-)$ to (\ref{TauTriangle}) we obtain the following exact sequence:
		\[0={\DD\Hom_A(Y,\trunc^{\geq 0}(I_X)[-1])} \leftarrow {\DD\Hom_A(Y,\tau(X)[1])} \leftarrow {\DD\Hom_A(Y,I_X)} \leftarrow {\DD\Hom_A(Y,\trunc^{\geq 0}(I_X))}.\]
		Finally, we have the following commutative diagram 
		\[\begin{tikzcd}
			&& {0=\DD\Hom_A(\cone(p_Y),\trunc^{\geq 0}(I_X))} \\
			& {\DD\Hom_A(Y,I_X)} & {\DD\Hom_A(Y,\trunc^{\geq 0}(I_X))} \\
			{\DD\Hom_A(P_Y^0, \tau(X)[1])=0} & {\DD\Hom_A(P_Y^0, I_X)} & {\DD\Hom_A(P_Y^0,\trunc^{\geq 0}(I_X))}
			\arrow[from=2-3, to=1-3]
			\arrow["{\DD\Hom_A(Y,\iota)}"', from=2-3, to=2-2]
			\arrow["{\DD\Hom_A(p,I_X)}", from=3-2, to=2-2]
			\arrow[from=3-2, to=3-1]
			\arrow[two heads, from=3-3, to=2-3]
			\arrow[two heads, from=3-3, to=3-2]
		\end{tikzcd}\]
		which implies that $\Ima(\DD\Hom_A(p,I_X))=\Ima(\DD\Hom_A(Y,\iota))$. Hence, all together we get the commutative diagram below. The induced morphism on the cokernel is an isomorphism by the universal property of the cokernel.
		\[\begin{tikzcd}
			0 & {\underline{\Hc{d}}(X,Y)} & {\Hom_A(X,Y)} & {\Hom_A(X,P_Y^0)} \\
			&& {\Hom_A(P_X,Y)} & {\Hom_A(P_X,P_Y^0)} \\
			0 & {\DD\Hom_A(Y,\tau(X)[1])} & {\DD\Hom_A(Y,I_X)} & {\DD\Hom_A(P_Y^0, I_X)}
			\arrow[from=1-2, to=1-1]
			\arrow["\exists ! \; \eta_{X,Y}"',"\cong", dashed, from=1-2, to=3-2]
			\arrow[from=1-3, to=1-2]
			\arrow["\text{adj.}"',"\cong", from=1-3, to=2-3]
			\arrow[from=1-4, to=1-3]
			\arrow["\text{adj.}"',"\cong", from=1-4, to=2-4]
			\arrow["(\ref{AR-formula})"',"\cong", from=2-3, to=3-3]
			\arrow[from=2-4, to=2-3]
			\arrow["(\ref{AR-formula})"',"\cong", from=2-4, to=3-4]
			\arrow[from=3-2, to=3-1]
			\arrow[from=3-3, to=3-2]
			\arrow[from=3-4, to=3-3]
		\end{tikzcd}\]
		As $\eta_{X,Y}$ is induced by a natural transformation it is a natural isomorphism.
    \end{proof}

    \section{Auslander--Reiten quivers}\label{AR-quivers}

    In this section, we describe properties of the Auslander--Reiten quiver of $\Hc{d}$. We briefly recall the necessary definitions and statements from \cite{INP24}. We call a morphism $f\colon M\to N$ in $\Hc{d}$
    \begin{itemize}
        \item [(i)] \definef{left minimal} if each morphism $g\colon N\to N$ satisfying $g\circ f=f$ is an isomorphism. \definef{Right minimal} morphisms are defined dually. 
        \item [(ii)] \definef{left almost split} if it is not a section and each morphism $h\colon M\to N'$ which is not a section factors $f$. \definef{Right almost split} morphisms are defined dually. 
        \item [(iii)] $f$ is \definef{source} if it is left almost split and left minimal. \definef{Sink} morphisms are defined dually. 
    \end{itemize}

    \begin{theorem}[{\cite[Theorem 2.9]{INP24}}]\label{thm: iyama nakaoka palu}
        Let $L\xrightarrow{f} M\xrightarrow{g} N\dashrightarrow$ be a conflation in $\Hc{d}$. Then the following conditions are equivalent: 
        \begin{itemize}
            \item [(i)] $f$ is left almost split and $N$ is indecomposable.
            \item [(ii)] $g$ is right almost split and $L$ is indecomposable.
            \item [(iii)] $f$ is a source morphism.
            \item [(iv)] $g$ is a sink morphism.
        \end{itemize}
    \end{theorem}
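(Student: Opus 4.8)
\emph{Proof proposal.} This is \cite[Theorem~2.9]{INP24}; the plan is to recover its proof by transcribing the classical Auslander--Reiten argument from module categories to the extriangulated category $\Hc{d}$, using that here conflations are precisely the triangles of $\fd(A)$ with all three terms in $\Hc{d}$, so cones are available and behave like canonical cokernels. I will freely use the standard facts that, in a conflation $L\xrightarrow{f}M\xrightarrow{g}N\dashrightarrow\delta$, the map $f$ is a section iff $g$ is a retraction iff $\delta=0$; that $h\colon N'\to N$ factors through $g$ iff $h^{*}\delta=0$ (dually, $h\colon L\to L'$ factors through $f$ iff $h_{*}\delta=0$); that $\Hc{d}$ is Krull--Schmidt (as in the proof of \cref{AlmostSplitCoro}), so indecomposable objects are exactly those with local endomorphism ring and Fitting's lemma applies to their endomorphisms; and that if the inflation (resp.\ deflation) of a conflation has zero component into (resp.\ out of) a nonzero direct summand $X$ of the middle term, then $X$ splits off together with the trivial conflation on it --- in particular the inflation is then not left minimal.

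First I would cut down the number of implications. Passing to $\Hc{d}^{op}$, which is again extriangulated, interchanges sections with retractions, left with right almost split morphisms, left with right minimal morphisms, hence source with sink morphisms, and sends the conflation above to $N\xrightarrow{g}M\xrightarrow{f}L\dashrightarrow$. Under this duality the implication (i)$\Rightarrow$(iii) is dual to (ii)$\Rightarrow$(iv), and (iii)$\Rightarrow$(ii) is dual to (iv)$\Rightarrow$(i). So it will be enough to prove (i)$\Rightarrow$(iii) and (iii)$\Rightarrow$(ii); the dual implications (ii)$\Rightarrow$(iv) and (iv)$\Rightarrow$(i) then close the cycle $\mathrm{(i)}\Rightarrow\mathrm{(iii)}\Rightarrow\mathrm{(ii)}\Rightarrow\mathrm{(iv)}\Rightarrow\mathrm{(i)}$, giving all four equivalences.

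For (i)$\Rightarrow$(iii) I would first note that $f$ left almost split forces $L$ indecomposable: if $L=L_{1}\oplus L_{2}$ nontrivially, the projections $L\to L_{i}$ are non-sections, hence factor through $f$, and assembling them realises $\mathrm{id}_{L}$ as a factor of $f$ --- impossible. Then, assuming $f$ left almost split but not left minimal, I get $M=M_{1}\oplus M_{2}$ with $M_{2}\neq 0$ and $f$ having zero component into $M_{2}$, so $N\cong M_{2}\oplus N_{1}$; if $N_{1}\neq 0$ this contradicts indecomposability of $N$, and if $N_{1}=0$ the deflation becomes a retraction (a split mono $M_{2}\to M_{2}$ is an isomorphism), forcing $\delta=0$ and hence $f$ a section --- again impossible; so $f$ is left minimal. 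For (iii)$\Rightarrow$(ii), $L$ is indecomposable by the same argument and $g$ is not a retraction (else $\delta=0$), so I only need $g$ right almost split: given a non-retraction $h\colon N'\to N$, pull $\delta$ back along $h$ to a conflation $L\xrightarrow{f'}M'\xrightarrow{g'}N'\dashrightarrow h^{*}\delta$ with a morphism of conflations $(\mathrm{id}_{L},\phi,h)$, and aim to show $h^{*}\delta=0$, i.e.\ that $h$ factors through $g$. Supposing $h^{*}\delta\neq 0$, the map $f'$ is not a section, hence factors through the left almost split $f$, say $f'=\psi f$; then $(\phi\psi)f=f$, and left minimality of $f$ makes $\phi\psi$ an automorphism of $M$, so $\phi$ is a retraction. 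I would then run a bookkeeping argument --- using biadditivity of $\EE$, that a conflation is determined up to isomorphism by its inflation, and that trivial summands split off --- to write $M'\cong M\oplus K$ with $f'\cong\binom{f}{0}$, identify $N'\cong N\oplus K$ with $h\cong(h_{1},0)$ for some $h_{1}\in\End(N)$, and conclude $h_{1}^{*}\delta=\delta$; since $h$ is not a retraction, $h_{1}$ is not an automorphism, so by Fitting's lemma $N$ has a nonzero summand $N^{\infty}$ on which $h_{1}$ is nilpotent, and iterating $\delta=(h_{1}^{n})^{*}\delta$ shows $\delta$ has zero component over $N^{\infty}$, contradicting left minimality of $f$. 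Hence $h$ factors through $g$, so $g$ is right almost split.

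The hard part will be this last bookkeeping step in (iii)$\Rightarrow$(ii): unlike in an exact or a triangulated category one has no direct access to cokernels, so the classical direct-sum manipulations must be rebuilt from the axioms --- chiefly (ET3)/(ET4), the additivity of $\EE$, and the structure of split conflations --- which is exactly the technical content carried out in \cite{INP24}; everything else is a routine transcription of the module-category argument, and once $g$ is known to be right almost split one also gets $N$ indecomposable directly, by the dual of the projection argument above.
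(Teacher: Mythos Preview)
The paper does not give its own proof of this statement: \cref{thm: iyama nakaoka palu} is quoted directly as \cite[Theorem~2.9]{INP24} and used as a black box, so there is nothing in the paper to compare your argument against beyond the citation itself. Your proposal is a correct reconstruction of the standard proof---the reduction by duality to (i)$\Rightarrow$(iii) and (iii)$\Rightarrow$(ii), the Krull--Schmidt/Fitting manipulations, and the pullback-and-factor step are precisely the ingredients of the argument in \cite{INP24}, and the bookkeeping you flag (splitting $M'\cong M\oplus K$ with $f'\cong\binom{f}{0}$, hence $N'\cong N\oplus K$ and $h_{1}^{*}\delta=\delta$) does go through in $\Hc{d}$ because conflations here are genuine triangles in $\fd(A)$, so cones of direct-sum inflations decompose on the nose.
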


    \begin{definition}
        We call a conflation $L\xrightarrow{f} M\xrightarrow{g} N\dashrightarrow$ in $\Hc{d}$ an \definef{almost-split conflation} if the equivalent condition in \cref{thm: iyama nakaoka palu} are satisfied. 
    \end{definition}

    By combining \cref{AlmostSplitCoro} with \cite[Prop.~3.12]{INP24}, we obtain the following corollary.
    \begin{coro}\label{LocalAR} 
        The following statements hold:
        \begin{enumerate}
            \item For a non-projective indecomposable object $M\in\Hc{d}$, we have an almost-split conflation
            \[\tau M\to X\to M\dashrightarrow.\]
            \item For a non-injective indecomposable object $N\in\Hc{d}$, we have an almost-split conflation
            \[N\to Y\to \tau^{-1}N\dashrightarrow.\]
        \end{enumerate}
        Moreover, by defining for all indecomposable $L,L'\in\Hc{d}$,
        \begin{align*}
             D_L\coloneqq(\Hc{d}/\rad\Hc{d})(L,L), \quad & \Irr(L,L')\coloneqq (\rad\Hc{d}/\rad^2\Hc{d})(L,L'),\\ 
            d_{LL'}\coloneqq \dim \Irr(L,L')_{D_L}, \quad  & d'_{LL'}\coloneqq _{D_{L'}}\Irr(L,L'),
        \end{align*}
        the objects $X$ and $Y$ in the above can be computed as 
        \[\bigoplus_{L:\text{ indec.}}L^{\oplus d'_{\tau (M)L}}\cong X\cong\bigoplus_{L:\text{ indec.}}L^{\oplus d_{LM}}, \quad 
        \bigoplus_{L:\text{ indec.}}L^{\oplus d'_{NL}}\cong Y\cong\bigoplus_{L:\text{ indec.}}L^{\oplus d_{L\tau^{-1}(N)}}.\]
    \end{coro}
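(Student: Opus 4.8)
The plan is to reduce everything to the general Auslander--Reiten theory for extriangulated categories with almost-split conflations developed in \cite{INP24}; the only genuine work is to verify that $\Hc{d}$ satisfies the hypotheses of \cite[Prop.~3.12]{INP24} and to match the abstract Auslander--Reiten translate it produces with the concrete functor $\tau$ of (\ref{TauDef}). First I would collect the structural facts that are already available: $\Hc{d}$ is $k$-linear and $\Ext$-finite by \cite[Prop.~6.12]{AMY19}; it is Krull--Schmidt by the argument given in the proof of \cref{AlmostSplitCoro} (idempotent completeness of $\D(A)$, closure of $\Hc{d}$ under direct summands, and \cite[Coro.~4.4]{Kra15}); it has enough projectives and injectives by \cref{EnoughPI}; and by \cref{AlmostSplitCoro} it has almost-split conflations with Auslander--Reiten--Serre duality $(\tau,\eta)$ as in \cref{MainTheorem}. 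This is precisely the input required by \cite[Prop.~3.12]{INP24}.

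Given this, for a non-projective indecomposable $M\in\Hc{d}$ the general theory provides a sink morphism $X\to M$ fitting into an almost-split conflation $L\to X\to M\dashrightarrow$, and by \cref{thm: iyama nakaoka palu} the object $L$ is indecomposable. To identify $L\cong\tau M$ I would unwind the construction in \cite[Prop.~3.12]{INP24}: the binatural isomorphism $\eta$ from \cref{MainTheorem} is exactly the datum of \cite[Def.~3.4]{INP24} that is used to build the almost-split conflation terminating at $M$, and tracing through that construction forces the first term to be $\tau M$. The dual assertion, the almost-split conflation $N\to Y\to\tau^{-1}N\dashrightarrow$ for a non-injective indecomposable $N$, is obtained symmetrically, using that $\tau\colon\underline{\Hc{d}}\to\overline{\Hc{d}}$ is an equivalence so that $\tau^{-1}$ is defined on $\overline{\Hc{d}}$.

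For the multiplicity formulas I would run the classical radical argument in the Krull--Schmidt category $\Hc{d}$: writing $X\cong\bigoplus_{L}L^{\oplus m_L}$ over indecomposables $L$, the composites $L\hookrightarrow X\to M$ and $\tau M\to X\twoheadrightarrow L$ are irreducible morphisms, and a count in $\rad\Hc{d}/\rad^2\Hc{d}$ — viewing $\Irr(L,M)$ as a right $D_L$-module and $\Irr(\tau M,L)$ as a left $D_L$-module — yields $m_L=d_{LM}$ read off from the right-hand end and $m_L=d'_{\tau(M)L}$ read off from the left-hand end; the computation for $Y$ is symmetric. This is the content of the second half of \cite[Prop.~3.12]{INP24}, so once the setup is fixed this step is essentially a citation.

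The step I expect to be the main obstacle is not any computation but the bookkeeping involved in identifying the abstractly defined translate with our $\tau$: one must check that $\eta$ from (\ref{TauDef})/\cref{MainTheorem} is literally the binatural isomorphism feeding into \cite[Def.~3.4, Prop.~3.12]{INP24} — naturality in both variables together with compatibility with the stable and costable projections — so that the ``$\tau$'' appearing in the two almost-split conflations is our $\tau$ on the nose, and not merely some equivalence isomorphic to it. After that identification everything else follows by direct appeal to \cite{INP24}.
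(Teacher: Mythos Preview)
Your proposal is correct and follows exactly the paper's approach: the paper states this corollary as an immediate consequence of combining \cref{AlmostSplitCoro} with \cite[Prop.~3.12]{INP24}, and your plan spells out precisely how that combination works. The bookkeeping you flag---matching the abstract translate of \cite{INP24} with the concrete $\tau$ of (\ref{TauDef})---is handled by \cite[Thm.~3.6]{INP24}, which shows that an Auslander--Reiten--Serre duality $(\tau,\eta)$ in the sense of \cite[Def.~3.4]{INP24} already pins down $\tau$ as the translate appearing in the almost-split conflations, so no further identification is needed beyond citing \cref{MainTheorem}.
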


    From the last formula, we immediately deduce the following corollary.
    
    \begin{coro}\label{prop: iiyatsu}
        Let $M$ be a non-projective indecomposable object in $\Hc{d}$ and $N\in\Hc{d}$. Then there is an irreducible morphism $f\in\Irr(N,M)$ if and only if there is an irreducible morphism $\tau M\to N$. 
    \end{coro}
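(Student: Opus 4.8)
The plan is to read the statement off the two descriptions of the middle term of the almost-split conflation ending in $M$ provided by \cref{LocalAR}. Since $M$ is indecomposable and non-projective, \cref{LocalAR}(1) yields an almost-split conflation $\tau M\to X\to M\dashrightarrow$ in $\Hc{d}$ together with isomorphisms
\[\textstyle\bigoplus_{L\colon\text{ indec.}}L^{\oplus d'_{\tau(M)L}}\;\cong\;X\;\cong\;\bigoplus_{L\colon\text{ indec.}}L^{\oplus d_{LM}},\]
the sums running over a chosen set of representatives of the isomorphism classes of indecomposables. Because $\Hc{d}$ is Krull--Schmidt by \cref{AlmostSplitCoro}, the multiplicity of the indecomposable $N$ as a summand of $X$ is well defined, and comparing the two expressions gives the numerical identity $d_{NM}=d'_{\tau(M)N}$. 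All that remains is to translate the (non-)vanishing of these integers into the existence of irreducible morphisms.

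For this I would first record two standard facts about a Krull--Schmidt extriangulated category. First, for each indecomposable $L\in\Hc{d}$ the endomorphism ring $\End_{\Hc{d}}(L)$ is local, so $D_L=(\Hc{d}/\rad\Hc{d})(L,L)\cong\End_{\Hc{d}}(L)/\rad\End_{\Hc{d}}(L)$ is a division ring; hence a one-sided $D_L$-module is zero if and only if its dimension is $0$. Second, for indecomposables $L,L'$ a morphism $f\colon L\to L'$ is irreducible if and only if $f\in\rad\Hc{d}(L,L')\setminus\rad^{2}\Hc{d}(L,L')$, so that an irreducible morphism $L\to L'$ exists precisely when $\Irr(L,L')\neq 0$. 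I also need that $\tau M$ is indecomposable, so that $\Irr(\tau M,N)$ and $d'_{\tau(M)N}$ make sense; this is immediate from \cref{thm: iyama nakaoka palu}, since the leftmost term of an almost-split conflation is indecomposable. (As in \cref{LocalAR}, I take $N$ indecomposable throughout, which is what makes $\Irr(N,M)$ meaningful.)

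Putting these together: $d_{NM}=\dim\Irr(N,M)_{D_N}$, so $\Irr(N,M)\neq 0\iff d_{NM}\neq 0$, and likewise $d'_{\tau(M)N}$ is the dimension of $\Irr(\tau M,N)$ as a left $D_N$-module, so $\Irr(\tau M,N)\neq 0\iff d'_{\tau(M)N}\neq 0$. Since $d_{NM}=d'_{\tau(M)N}$, the conditions $\Irr(N,M)\neq 0$ and $\Irr(\tau M,N)\neq 0$ are equivalent, and by the characterisation of irreducible morphisms above this is exactly the asserted equivalence between the existence of an irreducible morphism $N\to M$ and of one $\tau M\to N$. I do not expect a real obstacle here: the whole content is already packaged in \cref{LocalAR}, and the only things to check are the elementary division-ring bookkeeping and the indecomposability of $\tau M$, both of which are routine.
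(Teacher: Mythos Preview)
Your proposal is correct and follows exactly the route the paper intends: the paper simply says the corollary is immediate from the two decompositions of the middle term $X$ in \cref{LocalAR}, and you have spelled out precisely that deduction (Krull--Schmidt gives $d_{NM}=d'_{\tau(M)N}$, and nonvanishing of either equals existence of an irreducible morphism). Your added remarks on the indecomposability of $\tau M$ and of $N$ are useful clarifications the paper leaves implicit.
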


    \begin{definition}
        For $M\in\Hc{d}$, we define the \definef{top} of $M$ by $\Top M \coloneqq \Top H^0(M)\in\Hc{d}$. Dually, the \definef{socle} of $M$ is defined by $\soc M:=\soc(H^{-n+1}M)[d-1]\in\Hc{d}$. Finally, we define the \definef{radical} $\rad M\in\Hc{d}$ of $M$ as $\rad M:=\Cocone(M\to \Top M)$ and $M/\soc M:=\cone(\soc M\to M)$. By construction, there are the following triangles in $\fd(A)$.
        \begin{equation}\label{radical}
            \rad(M)\to M\to\Top M \to \rad(M)[1], \quad \soc(M)\to M\to M/\soc M \to \soc(M)[1]
        \end{equation}
    \end{definition}

    \begin{prop}\label{prop: sink of proj}
        Let $P\in\proj(\Hc{d})$ be indecomposable. Then $\rad P\to P$ is a sink morphism. 
    \end{prop}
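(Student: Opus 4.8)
The plan is to verify directly that $\rad P \to P$ is left almost split and left minimal, and then — since $P$ is indecomposable — invoke \cref{thm: iyama nakaoka palu} to conclude it is a sink morphism (equivalently, that the canonical conflation $\rad P \to P \to \Top P \dashrightarrow$, coming from the first triangle in (\ref{radical}), is almost-split). Actually, since we want a \emph{sink} morphism ending at $P$, we should instead check that $\rad P \to P$ is right almost split and right minimal; the two formulations are interchanged, and I will work on the ``right'' side throughout. First I would record that the inclusion $\rad P \to P$ is indeed a deflation: applying $H^0$ to the first triangle in (\ref{radical}) gives a short exact sequence $\rad H^0(P) \to H^0(P) \to \Top H^0(P)$ in $\mathcal{H} = \Mod(H^0(A))$, so $H^0(\rad P \to P)$ is an epimorphism and hence $\rad P \to P$ is a deflation by \cref{RemarkExHeart}; thus it fits into a conflation and \cref{thm: iyama nakaoka palu} applies once we have the almost-split property.

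Next I would prove \textbf{right almost split}: a morphism $g\colon N \to P$ in $\Hc{d}$ that is not a retraction must factor through $\rad P \to P$. Compose $g$ with $P \to \Top P$ and look at $H^0$: we get a map $H^0(N) \to \Top H^0(P) = \Top(H^0(P))$ in $\mathcal{H}$. If this composite $H^0(N) \to H^0(P) \to \Top H^0(P)$ were an epimorphism, then $H^0(N)\to H^0(P)$ would be an epimorphism (as $\Top H^0(P)$ is the top), hence by Nakayama's lemma (since $\rad H^0(P)$ is the Jacobson radical times $H^0(P)$) a \emph{split} epimorphism onto the projective $H^0(P)$; lifting the splitting back to $\Hc{d}$ using that $P \in \add(A)$ is projective in the extriangulated sense and that $\Hom_A(H^0(P), -)$ detects maps out of $A$ appropriately — more precisely, using the isomorphism $H^0\colon \Hom_A(P, N) \xrightarrow{\cong} \Hom_A(H^0(P), H^0(N))$ from \cite[Lem.~3.2]{Bon24} (dually to (\ref{H0Iso})) — we would conclude $g$ is a retraction, a contradiction. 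So the composite $H^0(N) \to \Top H^0(P)$ is \emph{not} an epimorphism; since $\Top H^0(P)$ is semisimple, its image is a proper submodule, so the composite $N \to P \to \Top P$ is \emph{not} a deflation. But $\Top P$ is (up to the shift conventions) a semisimple module, and I claim any non-deflation $N \to \Top P$ lifts to $\rad P$: apply $\Hom_A(N,-)$ to the triangle $\rad P \to P \to \Top P \to \rad P[1]$, giving $\Hom_A(N, P) \to \Hom_A(N, \Top P) \to \Hom_A(N, \rad P[1]) = \EE(N, \rad P)$; the obstruction to lifting $g$ itself along $\rad P \to P$ is the image of $g$ under $\Hom_A(N,P)\to\Hom_A(N,\Top P)\to \EE(N,\rad P)$, but that image is the class of the pullback conflation, and — here is the crux — one shows this pullback splits precisely because the composite $N\to\Top P$ is not a deflation, using that over the \emph{semisimple} module $\Top H^0(P)$ every extension-theoretic obstruction in $\mathcal H$ is controlled, together with the fact that $\EE$-classes in $\Hc{d}$ with a semisimple cokernel are detected on $H^0$. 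Hence $g$ factors through $\rad P \to P$.

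Finally, \textbf{right minimality}: if $h\colon \rad P \to \rad P$ satisfies $(\rad P \to P)\circ h = (\rad P \to P)$, then applying $\Hom_A(-, P)$ or just chasing the triangle shows $\id - h$ factors through $\Top P[-1] = \Cocone(P \to \Top P)$; but $\Hom_A(\Top P[-1], \rad P)$-valued obstructions vanish since $\rad H^0(P) \subseteq \Jac(H^0(A))\cdot H^0(P)$ has no semisimple summand matching $\Top H^0(P)$ — concretely, $H^0(\rad P)$ has trivial intersection with the socle-type condition forcing $\id-h$ nilpotent, so $h$ is an isomorphism. Combining: $\rad P \to P$ is right almost split and right minimal, hence a sink morphism by definition (iii), and by \cref{thm: iyama nakaoka palu} the conflation $\rad P \to P \to \Top P\dashrightarrow$ is almost-split. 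The main obstacle I anticipate is the lifting step in the right-almost-split argument — controlling the $\EE$-class of the pullback conflation over the semisimple object $\Top P$ and showing it splits exactly when $N\to\Top P$ fails to be a deflation; this is where one genuinely uses that $\Top P$ is a (shifted) semisimple module and the interplay between $\EE$ in $\Hc{d}$ and $\Ext^1$ in $\mathcal H$, rather than any soft formal nonsense. The minimality and the ``not a retraction $\Rightarrow$ factors'' dichotomy are comparatively routine, reducing cleanly to Nakayama's lemma in the module category $\mathcal H = \Mod(H^0(A))$ via the faithful functor $H^0$ on maps out of $\add(A)$.
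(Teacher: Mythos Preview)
Your proposal has one outright error and one genuine gap. The error: you claim $\rad P\to P$ is a deflation because ``$H^0(\rad P\to P)$ is an epimorphism''. It is not --- it is the inclusion $\rad H^0(P)\hookrightarrow H^0(P)$, a monomorphism, so $\iota_P$ is an \emph{inflation}. This also means invoking \cref{thm: iyama nakaoka palu} is beside the point: since $P$ is projective, no almost-split conflation ends at $P$, and one simply verifies the definition of sink morphism directly.

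The gap is in the right-almost-split step. You correctly argue that if $g\colon N\to P$ is not a retraction then the composite $H^0(N)\to\Top H^0(P)$ fails to be epi, but then you only conclude it is ``not a deflation'' and try to lift $g$ via an obstruction in $\EE(N,\rad P)$. That obstruction --- the image of $g$ under $\Hom_A(N,P)\to\Hom_A(N,\Top P)\to\EE(N,\rad P)$ --- is \emph{always} zero by exactness of the long sequence, so it proves nothing. What is actually required is $p\circ g=0$ in $\Hom_A(N,\Top P)$, and the missing observation is that $\Top H^0(P)$ is \emph{simple} (not merely semisimple), because $P$ and hence $H^0(P)$ is indecomposable; thus ``not epi'' forces ``zero'', and since $\Hom_A(N,\Top P)\cong\Hom_{\mathcal H}(H^0(N),\Top H^0(P))$ by $t$-structure adjunction, we get $p\circ g=0$ and $g$ factors through $\iota_P$. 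This is the paper's argument. For right minimality, your factoring of $\id-h$ through $\Top P[-1]$ can be made to work (use $\Hom_A(\rad P,\Top P[-1])=0$ by $t$-orthogonality), but the paper's route is cleaner: $H^i(\iota_P)$ is an isomorphism for $i\neq 0$ and a monomorphism for $i=0$, so $\iota_P\circ h=\iota_P$ forces $H^*(h)=\id$, hence $h$ is an isomorphism.
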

    
    \begin{proof}
        \leaveout{Since $\rad P\to P$ is a $1$-monomorphism in $\Hc{d}$, it is a monomorphism and thus is a right minimal. Next, we show that $\rad P\to P$ is unique maximal $1$-monomorphism up to isomorphisms. Take a maximal $1$-monomorphism $M\to P$ in $\modn\Lambda$ which is not an isomorphisms. Then the induced morphism $\top M\to \top P$ equals to zero. Thus we have a morphism $M\to P$ which commutes a following diagram: 
        $$
        \begin{tikzcd}
        {} & M & \top (H^0M)\\
        \rad P & P & \top (H^0P)
        \Ar{1-2}{1-3}{}
        \Ar{2-1}{2-2}{}
        \Ar{2-2}{2-3}{}
        \Ar{1-3}{2-3}{"0"}
        \Ar{1-2}{2-2}{}
        \Ar{1-2}{2-1}{dashed}
        \end{tikzcd}
        $$
        And the induced morphim $M\to \rad P$ is isomorphisms since $H^0M\to H^0P$ isomorphic to $\rad (H^0P)\to H^0P$. Thus, $\rad P\to P$ is unique maximal $1$-monomorphism. 
        
        At the last, we show that $\rad P\to P$ is right almost split. Take a $N\to P$ which is not a retraction (equivalently,not a $n$-epimorphism). Then there is a unique factorization $N\xrightarrow{e_n}N'\xrightarrow{m_1}P$ by Theorem \ref{prop: fundamental propositions of n-extended} (iii'). By the uniqueness of the factorization, $m_1\colon N'\to P$ is not a isomorphism. Thus, $m_1$ factors $\rad P\to P$.}

        First, $\iota_P:\rad P\to P$ is not a retract as it not a deflation. Next, we check it is right minimal. Let $h:\rad P\to \rad P$ be such that $\iota_P \circ h=\iota_P$. Then $h$ induces an isomorphism in cohomology since $H^i(\iota_P)$ is an isomorphism for $i\neq 0$ and a monomorphism for $i=0$. But this means $h$ is an isomorphism in $\fd(A)$. Finally, let $f:N\to P$ be a morphism in $\Hc{d}$ that is not a retract, and denote by $p:P\to \Top P$ the morphism from the triangle (\ref{radical}). Then, since $P$ is projective, $f$ is not a deflation and therefore $p \circ f=0$ as $\Top P=\Top H^0(P)$ is indecomposable. Applying $\Hom_A(N,-)$ to (\ref{radical}) we obtain the exact sequence
        \[\Hom_A(N,\rad P)\stackrel{(\iota_P)_*}{\longrightarrow}\Hom_A(N,P)\stackrel{p_*}{\longrightarrow}\Hom_A(N,\Top P).\]
        As $p_*(f)=0$, by exactness $f$ factors through $\iota_P$
    \end{proof}

    Dually, we have the following proposition. 
    
    \begin{prop}\label{prop: sorce inj}
        Let $I\in\inj(\Hc{d})$ be indecomposable. Then $I\to I/\soc I$ is a source morphism. 
    \end{prop}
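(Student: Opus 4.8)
The plan is to dualise the proof of \cref{prop: sink of proj}, using the triangle $\soc(I)\xrightarrow{s}I\xrightarrow{\pi_I}I/\soc(I)\to\soc(I)[1]$ from (\ref{radical}) in place of the one for $\rad P$. Since $I\in\inj(\Hc{d})$ is indecomposable, \cref{EnoughPI} gives $I\cong J[d-1]$ with $J\in\Dinj$ indecomposable, and since $H^0$ restricts to an equivalence $\Dinj\xrightarrow{\sim}\mathrm{inj}(H^0(A))$ (the corollary following \cref{EnoughInj}), the module $H^{-d+1}(I)\cong H^0(J)$ is an indecomposable injective $H^0(A)$-module. In particular $S\coloneqq\soc(H^{-d+1}I)$ is simple and essential in $H^{-d+1}(I)$, while $H^{-d+1}(s)\colon S\to H^{-d+1}(I)$ is the socle inclusion and $H^i(s)=0$ for $i\neq -d+1$.

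First I would check that $\pi_I$ is not a section: a section is a split monomorphism, hence an inflation, so by \cref{RemarkExHeart} it suffices to see that $H^{-d+1}(\pi_I)$ is not a monomorphism; but the long exact cohomology sequence of the triangle identifies $H^{-d+1}(\pi_I)$ with the canonical epimorphism $H^{-d+1}(I)\twoheadrightarrow H^{-d+1}(I)/S$, whose kernel $S$ is nonzero. Next, $\pi_I$ is left minimal: the same long exact sequence (together with $H^{-d}(I)=0$ and $H^i(I/\soc(I))=0$ for $i\leq -d$ and $i>0$) shows that $H^i(\pi_I)$ is an isomorphism for $i\neq -d+1$ and an epimorphism for $i=-d+1$, so if $h\colon I/\soc(I)\to I/\soc(I)$ satisfies $h\circ\pi_I=\pi_I$ then $H^i(h)=\id$ for all $i$; hence $h$ is a quasi-isomorphism and therefore an isomorphism in $\fd(A)$, exactly as in \cref{prop: sink of proj}.

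It remains to prove that $\pi_I$ is left almost split. Let $g\colon I\to N'$ in $\Hc{d}$ not be a section. Since $I$ is injective, $g$ cannot be an inflation: an inflation $g$ sits in a conflation with conflated object $C\in\Hc{d}$ realising a class in $\EE(C,I)=0$, hence that conflation is split and $g$ is a section, a contradiction. Therefore $H^{-d+1}(g)$ is not a monomorphism by \cref{RemarkExHeart}, i.e. $\Ker H^{-d+1}(g)\neq 0$, and since the socle $S$ of the indecomposable injective $H^{-d+1}(I)$ is simple and essential we get $S\subseteq\Ker H^{-d+1}(g)$. Writing $\soc(I)=S[d-1]$ with $S\in\mathcal{H}$ and using $N'\in\fd(A)^{>-d}$, the truncation adjunctions give a natural isomorphism $\Hom_A(\soc(I),N')\cong\Hom_{H^0(A)}(S,H^{-d+1}(N'))$ sending $\psi$ to $H^{-d+1}(\psi)$; under it $g\circ s$ corresponds to the restriction of $H^{-d+1}(g)$ along the socle inclusion $H^{-d+1}(s)$, which vanishes. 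Hence $g\circ s=0$, and applying $\Hom_A(-,N')$ to the triangle yields the exact sequence
\[\Hom_A(I/\soc(I),N')\xrightarrow{\pi_I^{*}}\Hom_A(I,N')\xrightarrow{s^{*}}\Hom_A(\soc(I),N'),\]
in which $s^{*}(g)=g\circ s=0$ forces $g$ to factor through $\pi_I$, as desired.

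Most of this is routine bookkeeping with long exact cohomology sequences and the dualisation of \cref{prop: sink of proj}. The only step that genuinely uses the hypothesis on $I$ is ``$g$ not a section $\Rightarrow$ $g$ not an inflation'', which is the precise dual of the projective-lifting argument used there. The step I expect to need the most care is the identification of $\Hom_A(\soc(I),N')$ with a $\Hom$ of $H^0(A)$-modules under which $g\circ s$ becomes the restriction of $H^{-d+1}(g)$ to the socle; this is exactly what makes the essentiality of the simple socle of $H^{-d+1}(I)$ applicable, and verifying that this identification is compatible with the triangle is the one slightly delicate point.
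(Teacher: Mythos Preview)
Your proposal is correct and is precisely the explicit dualisation of \cref{prop: sink of proj} that the paper intends; the paper itself gives no separate proof, merely stating ``Dually, we have the following proposition.'' Your care with the identification $\Hom_A(\soc(I),N')\cong\Hom_{H^0(A)}(S,H^{-d+1}(N'))$ and the essentiality of the simple socle of the indecomposable injective $H^{-d+1}(I)$ is exactly what is needed to make the dual of the ``$p\circ f=0$'' step go through.
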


    \begin{prop}\label{proj inj}
        Let $P\in\proj(\Hc{d})\cap\inj(\Hc{d})$ be indecomposable, non-simple. We have an almost-split conflation: 
        \[\rad P\stackrel{\begin{psmallmatrix}\iota\\ q\end{psmallmatrix}}{\longrightarrow} P\oplus (\rad P/\soc P)\stackrel{\begin{psmallmatrix}p &-j\end{psmallmatrix}}{\longrightarrow} P/\soc P\dashrightarrow,\]
        where $\iota,j,p,q$ are the canonical inclusions and projections from (\ref{radical}) respectively.
    \end{prop}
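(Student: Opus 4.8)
Write $N\coloneqq P/\soc P$ and keep the notation of \eqref{radical}, so that $\iota\colon\rad P\to P$, $p\colon P\to N$, $q\colon\rad P\to\rad P/\soc P$, $j\colon\rad P/\soc P\to N$, together with $\pi\colon P\to\Top P$ and the socle inclusion $a\colon\soc P\to P$, are the canonical morphisms. The plan is to show that the displayed complex is a conflation and then to identify it with the almost-split conflation ending in $N$. First I would collect the preliminaries. Since $P\in\inj(\Hc d)$, applying $\Hom_A(\Top P,-)$ to $\rad P\xrightarrow{\iota}P\xrightarrow{\pi}\Top P\to\rad P[1]$ and using $\EE(\Top P,P)=0$ shows that $\EE(\Top P,\rad P)$ is a one-dimensional module over the division ring $\End_A(\Top P)$, generated by the class of that triangle; as the triangle is non-split, a decomposition $\rad P=M_1\oplus M_2$ would make this class have a vanishing component and split off a summand of the indecomposable $P$, a contradiction, so $\rad P$ is indecomposable. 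Dually, using $P\in\proj(\Hc d)$ and $\soc P\xrightarrow{a}P\xrightarrow{p}N$, the object $N$ is indecomposable and non-projective. One also checks $\soc(\rad P)=\soc P$ (on $H^{-d+1}$), $\rad(N)=\rad P/\soc P$ with inclusion $j$, and that the square of canonical maps
\[\begin{tikzcd}
\rad P\ar[r,"\iota"]\ar[d,"q"']&P\ar[d,"p"]\\
\rad P/\soc P\ar[r,"j"]&N
\end{tikzcd}\]
commutes and is homotopy bicartesian, the induced morphism $\cone(\iota)=\Top P\to\Top(N)=\cone(j)$ being the identity (both compute $H^0(P)\twoheadrightarrow\Top H^0(P)$). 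Hence the displayed complex is a distinguished triangle in $\fd(A)$ all of whose terms lie in $\Hc d$, i.e.\ a conflation; it is non-split, e.g.\ because $N$ has strictly smaller total cohomological dimension than $P$.

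Next I would show $\tau(N)\cong\rad P$. By \cref{prop: sorce inj} the source morphism $p\colon P\to N$ is irreducible, as its target $N$ is indecomposable; hence by \cref{prop: iiyatsu} there is an irreducible morphism $\theta\colon\tau(N)\to P$. By \cref{prop: sink of proj}, $\iota$ is right almost split and not a retraction, so $\theta$ factors as $\iota\circ\theta'$, and irreducibility of $\theta$ forces $\theta'$ to be a split monomorphism; thus $\tau(N)$ is a direct summand of the indecomposable $\rad P$, so $\tau(N)\cong\rad P$.

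Finally I would identify the conflation above with the almost-split conflation $\tau(N)\xrightarrow{u}E\xrightarrow{g}N\dashrightarrow$ provided by \cref{LocalAR}(1). Since $p$ is not a retraction it factors as $p=g\circ s$; the composite $s\circ a$ is nonzero, for otherwise $s$ would factor through $p$ (as $\cone(a)\cong N$) and yield a splitting of $g$ — here one uses that $p$ is an epimorphism in $\Hc d$, which holds because $\Hom_A(\soc P[1],Z)=0$ for every $Z\in\Hc d$. Forming the homotopy pullback $F$ of $g$ along $p$, the pair $(s,\id_P)$ splits $\operatorname{pr}_P\colon F\to P$, whose homotopy fibre is $\tau(N)$; since $P$ is projective the corresponding triangle splits and $F\cong\tau(N)\oplus P$. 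The homotopy fibre of $\operatorname{pr}_E\colon F\to E$ is $\soc P$, so $E\cong\cone\bigl(\soc P\xrightarrow{(\beta,\,a)}\tau(N)\oplus P\bigr)$ for some $\beta\colon\soc P\to\tau(N)$ with $u\beta=-s\circ a\neq0$; as $\soc P$ is simple, $\beta$ is an inflation. Transporting along $\tau(N)\cong\rad P$ and using that $\rad P$ has simple socle $\soc P$, the morphism $\beta$ coincides with the canonical inclusion $\soc P\hookrightarrow\rad P$ up to an automorphism of $\soc P$. Therefore the almost-split conflation, being the pushout of the socle triangle of $P$ along $\beta$, is isomorphic to the pushout of that triangle along $\soc P\hookrightarrow\rad P$, which — via the homotopy bicartesian square of the first step — is exactly the displayed conflation, with structure maps $\binom{\iota}{q}$ and $\begin{psmallmatrix}p&-j\end{psmallmatrix}$. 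Hence the displayed conflation is almost-split.

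I expect the last paragraph to be the main obstacle: matching the two pushout conflations not merely up to abstract isomorphism but so that the structure morphisms become the stated canonical ones, which requires some care with the homotopy-pullback identifications and with the (harmless) automorphisms of the simple object $\soc P$. The first two paragraphs, by contrast, should be routine given \cref{RemarkExHeart}, \cref{prop: sink of proj}, \cref{prop: sorce inj}, \cref{prop: iiyatsu}, \cref{LocalAR} and \cref{thm: iyama nakaoka palu}.
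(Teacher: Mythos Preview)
Your proposal is correct but takes a markedly different, more circuitous route than the paper.

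The paper proves the statement by directly verifying condition~(i) of \cref{thm: iyama nakaoka palu}: it checks that $\binom{\iota}{q}$ is left almost split via a short case analysis. Given a non-section $h\colon\rad P\to M$, either $h$ is an inflation---in which case injectivity of $P$ produces a map $g\colon M\to P$ with $\iota=g\circ h$, and a brief dichotomy on whether $g$ hits $\Top P$ shows $h$ factors through $\iota$ (or forces a contradiction)---or $h$ is not an inflation, in which case $h$ annihilates the simple object $\soc P$ and hence factors through $q$. Indecomposability of the end terms is dispatched in one line (simple socle, simple top). No appeal to $\tau$, to \cref{LocalAR}, or to \cref{prop: iiyatsu} is needed.

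Your approach, by contrast, first proves $\tau(P/\soc P)\cong\rad P$ by combining \cref{prop: sorce inj}, \cref{prop: iiyatsu}, and \cref{prop: sink of proj}, then invokes the abstract existence of the almost-split conflation from \cref{LocalAR}, and finally identifies that conflation with the displayed one via a homotopy-pullback computation. The first two steps are clean and correct. The third step, as you yourself anticipate, is where the work piles up: to pass from ``$E\cong\cone\bigl(\soc P\xrightarrow{(\beta,a)}\rad P\oplus P\bigr)$ with $\beta$ the socle inclusion up to an automorphism'' to ``the almost-split conflation is literally the displayed Mayer--Vietoris conflation with maps $\binom{\iota}{q}$ and $(p,-j)$'' one must run the octahedron on $\soc P\xrightarrow{b}\rad P\xrightarrow{(\id,-\iota)}\rad P\oplus P$, identify the resulting split triangle $\rad P/\soc P\to E\to P$, and then trace the components of the map $\rad P\to E$ through that splitting to recover $\iota$ and $q$. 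This can be done, but it is laborious bookkeeping that the paper's direct two-case argument sidesteps entirely. What your approach buys is the intermediate fact $\tau(P/\soc P)\cong\rad P$, which is of independent interest; what the paper's approach buys is a short, self-contained proof that does not rely on the Auslander--Reiten--Serre machinery already established.
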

    
    \begin{proof}
        One easily checks that the above defines a conflation in $\Hc{d}$. Observe that $\rad(P)$ is indecomposable because it has simple socle and dually $P/\soc P$ is indecomposable as $P$ is indecomposable.\leaveout{ By \cref{prop: sink of proj}, $\iota$ is the unique irreducible morphism ending in $P$. Dually, by \cref{prop: sorce inj}, $p$ is the unique irreducible morphism starting in $P$. Hence \cref{prop: iiyatsu} yields $\tau (P/\soc P)\cong \rad P$.} We need to argue that $\begin{psmallmatrix}\iota\\q\end{psmallmatrix}$ is left almost split. Clearly it is not a section, as the given sequence is not split. Consider a morphism $h:\rad P\to M$, which is not a section. First we consider the case that $h$ is an inflation. We claim that $h$ factors through $\iota$. By injectivity of $P$, there exists $g:M\to P$ such that $\iota=g\circ h$. If the composition $M\stackrel{g}{\to}P\to\Top P$ is $0$, the morphism $g$ factors through $\iota$, that is there exists $\tilde{g}:M\to\rad P$ such that $g=\tilde{g}\circ\iota$. 
        \[\begin{tikzcd}
	M & M & 0 & {} \\
	{\rad P} & P & {\Top P} & {}
	\arrow["\id", from=1-1, to=1-2]
	\arrow["{\exists \tilde{g}}", shift left=2, dotted, from=1-1, to=2-1]
	\arrow[from=1-2, to=1-3]
	\arrow["g", from=1-2, to=2-2]
	\arrow[dashed, from=1-3, to=1-4]
	\arrow[from=1-3, to=2-3]
	\arrow["h", shift left=2, from=2-1, to=1-1]
	\arrow["\iota", from=2-1, to=2-2]
	\arrow[from=2-2, to=2-3]
	\arrow[dashed, from=2-3, to=2-4]
        \end{tikzcd}\]
        We claim that $\tilde{g}$ is a retraction of $h$. First observe that $\iota\circ \tilde{g}\circ h=h\circ \id \circ g=\iota$. Furthermore, as $\Top (P)[-1]\in\fd(A)^{>0}$, applying $\Hom_A(\rad P,-)$ to the second row yields the exact sequence
        \[0=\Hom_A(\rad P,\Top (P) [-1])\longrightarrow \Hom_A(\rad P,\rad P)\stackrel{\iota_*}{\longrightarrow}\Hom_A(\rad P, P).\]
        Hence, $\iota_*$ is injective and $\tilde{g}\circ h=\id$, that is $h$ is a section, a contradiction to the assumption.
        
        If the composition $M\stackrel{g}{\to}P\to\Top P$ is non-zero, then $g$ is a deflation, as $\Top P$ is indecomposable. By projectivity of $P$ we get a section $s$ of $g$ and hence $s\circ \iota =s\circ g\circ h=h$, that is $h$ factors through $\iota$, which is what we wanted to prove.
        
        Assume now $h$ is not an inflation. The the equality $h(\soc P)=0$ holds, since $\soc P$ is simple. Hence, $h$ factors through $\rad P/\soc P$.
    \end{proof}

    Next, we discuss the connected components of the Auslander–Reiten quiver. The goal is to prove the 1st Brauer--Thrall conjecture for $\Hc{d}$. The main part consists in proving the Harada--Sai Lemma in this context. The general strategy is an adaption of \cite[Ch.~IV,Lem.~5.2]{ASS06}. First we need the following variant of \cite[Thm.~4.21]{M25}.

    \begin{lemma}\label{lem: mochizuki}
        Let $-d<\alpha\leq 0$. A morphism $f\colon M\to N$ in $\Hc{d}$, admits a factorization
        \[\begin{tikzcd}
	M && N \\
	& {\Ima^{\alpha}(f)}
	\arrow["f", from=1-1, to=1-3]
	\arrow["\pi"', from=1-1, to=2-2]
	\arrow["\iota"', from=2-2, to=1-3]
        \end{tikzcd}\]
        such that
        \begin{itemize}
            \item $H^{i}(\pi)$ is an isomorphism for $\alpha <i \leq 0$. 
            \item $H^{\alpha}(\pi)$ is an epimorphism, $H^{\alpha}(\iota)$ is a monomorphism and $H^{\alpha}(\iota)$ induces an isomorphism $H^{\alpha}(\Ima^{\alpha}(f))\cong\Ima(H^{\alpha}(f))$.
            \item $H^{i}(\iota)$ is an isomorphism for $-d<i<\alpha$.
        \end{itemize}
    \end{lemma}

    \begin{proof}
        Consider the truncation 
        \[\cone f \to \trunc^{\geq \alpha} (\cone f),\]
        and define $\Ima^{\alpha}(f)$ as the cocone of the composition $N\to\cone f\to \trunc^{\geq \alpha}(\cone f)$. By construction, there is a triangle
        \[\Ima^{\alpha}(f) \to N \to t^{\geq \alpha} (\cone f) \dashrightarrow.\]
        This gives rise to a morphism \( M \to \Ima^{\alpha}(f) \). Now consider the following diagram, in which the two top rows and the rightmost two columns are part of triangles.
        \[
        \begin{tikzcd}
        M \arrow[equal]{d} \arrow["\pi"]{r} & \Ima^{\alpha}(f) \arrow{r} \arrow["\iota"]{d} & t^{\leq \alpha-1} (\cone f) \arrow{d} \arrow[dashed]{r} & {}\\
        M \arrow["f"]{r} \arrow["0"']{rd} & N \arrow{r} \arrow{d} & \cone f \arrow{d} \arrow[dashed]{r} & {}\\
        & t^{\geq \alpha} (\cone f) \arrow[equal]{r} \arrow[dashed]{d} & t^{\geq \alpha} (\cone f) \arrow[dashed]{d}\\
        {} & {} & {} & {}
        \end{tikzcd}
        \]
        Then $\Ima^{\alpha}\in\Hc{d}$, since $M,\trunc^{\leq \alpha-1}(\cone f)\in\Hc{d}$. The long exact sequence in cohomology for the first row yields that $H^i(\pi)$ is an isomorphism for $\alpha<i\leq 0$ and an epimorphism for $i=\alpha$. The long exact sequence in cohomology for the middle column yields that $H^{i}(\iota)$ is an isomorphism for $-d<i<\alpha$ and a monomorphism for $i=\alpha$. As $f=\iota\pi$ it is also clear that $H^{\alpha}(\iota)$ induces an isomorphism $H^{\alpha}(\Ima^{\alpha}(f))\cong\Ima(H^{\alpha}(f))$.
    \end{proof}

    \begin{definition}
        Fix $b\in\mathbb{N}$. A \definef{$b$-Harada--Sai factorization} is a sequence
        \[M_1\stackrel{f_1}{\longrightarrow} M_2\stackrel{f_2}{\longrightarrow}\cdots \stackrel{f_{t-1}}{\longrightarrow}M_{t}\]
        of non-isomorphisms $f_i:M_i\to M_{i+1}$ between indecomposable objects $M_1,\ldots,M_t\in\Hc{d}$ with 
        \[\dim_kH^{*}(M_i)\leq b.\] 
        We will sometimes refer to a $b$-Harada--Sai factorization by $f=f_{t-1}\circ \ldots \circ f_1$. We call the the natural number $t$ the \definef{length} of $f$ and define the \definef{rank} of $f$ by 
        \[\rk(f)\coloneqq \min\left\{\dim_kH^*(F):F\in\Hc{d},f=hg \; \text{ for some } g\in\Hc{d}(M_1,F),h\in\Hc{d}(F,M_t)\right\}.\]
    \end{definition}

    \begin{lemma}\label{FixedDimension}
        Let $b\in\mathbb{N}$ and $t(1)=b(d-1)+2$. Consider a sequence of non-isomorphisms
        \[M_1\stackrel{f_1}{\longrightarrow} M_2\stackrel{f_2}{\longrightarrow}\cdots \stackrel{f_{t(1)-1}}{\longrightarrow}M_{t(1)}\]
        between (not necessarily indecomposable) objects $M_i\in\Hc{d}$ such that ${\dim_kH^*(M_i)=b}$. Then it holds $\rk(f_{t(1)-1}\circ\cdots\circ f_1)\leq b-1$.
    \end{lemma}

    \begin{proof}
        If $f_1$ is a degreewise monomorphism in cohomology, for dimension reasons, it is an isomorphism, a contradiction to the assumption. Therefore, we choose the maximal $-d<\alpha_1\leq 0$ such that $H^{\alpha_1}(f_1)$ is not a monomorphism. Consider the factorization from \cref{lem: mochizuki}
        \[f_1\colon M_1\stackrel{\pi_1}{\longrightarrow}\Ima^{\alpha_1}(f_1)\stackrel{\iota_1}{\longrightarrow} M_2.\]
        By \cref{lem: mochizuki} it holds 
        \[\dim_kH^j(\Ima^{\alpha_1}(f_1))=\begin{cases}\dim_kH^j(M_1) & \text{ if }\alpha_1< j\leq 0,\\ \dim_k\Ima (H^j(f_1)) & \text{ if }j=\alpha_1,\\ \dim_kH^j(M_2) & \text{ if }-d<j<\alpha_1.\end{cases}\]
        In particular, if we associate to an object $M\in\Hc{d}$ the vector 
        \[\dim(M)\coloneqq(\dim_kH^{j}(M))_{-d<j\leq 0}\in\mathbb{N}^d\]
        we see that in the interval $[\alpha_1,0]$ the cohomology strictly decreases in dimension, that is
        \[\dim(\Ima^{\alpha_1}(f_1))_{\alpha_1< j\leq 0}\leq \dim(M_1)_{\alpha_1<j\leq 0}, \quad \dim(\Ima^{\alpha_1}(f_1))_{\alpha_1}< \dim(M_1)_{\alpha_1}.\]
        Moreover, one easily sees that $\dim_kH^*(\Ima^{\alpha_1}(f_1))\leq b$. If the inequality is strict, we are done. Otherwise, consider the morphism $f_2\iota_1$. This is not an isomorphism, otherwise $f_2$ would be a split epimorphism and for dimension reasons $f_2$ would be an isomorphism, a contradiction. Hence, we can find a maximal $-d<\alpha_2\leq 0$ such that $H^{\alpha_2}(f_2\iota_1)$ is not a monomorphism. \cref{lem: mochizuki} yields a factorization
        \[\begin{tikzcd}
	{M_1} && {M_2} && {M_3} \\
	& {\Ima^{\alpha_1}(f_1)} && {\Ima^{\alpha_2}(f_2\iota_1)}
	\arrow["{f_1}", from=1-1, to=1-3]
	\arrow["{\pi_1}", from=1-1, to=2-2]
	\arrow["{f_2}", from=1-3, to=1-5]
	\arrow["{\iota_1}", from=2-2, to=1-3]
	\arrow["{\pi_2}", from=2-2, to=2-4]
	\arrow["{\iota_2}", from=2-4, to=1-5]
        \end{tikzcd}\]
        As above we see that
        \[\dim(\Ima^{\alpha_2}(f_2\iota_1))_{\alpha_2<j\leq 0}\leq \dim(\Ima^{\alpha_1}(f_1))_{\alpha_2 < j\leq 0}, \quad \dim(\Ima^{\alpha_2}(f_2\iota_1))_{\alpha_2}< \dim(\Ima^{\alpha_1}(f_1))_{\alpha_2}.\]
        Continuing in this manner either we obtain $\dim_{k}H^*(\Ima(f_i\iota_{i-1}))<b$ at some point or after at most $b(d-1)$ iterations we have $\dim_kH^{-d+1}(\Ima(f_i\iota_{i-1}))=b$. In that case $\alpha_{i+1}=1-d$, and since $f_{i+1}\iota_i$ is not an isomorphism $\dim_kH^*(\Ima^{-d+1}(f_{i+1}\iota_i))<b$.
    \end{proof}
    
    \begin{lemma}
        Let $b,n\in\mathbb{N}$ such that $n\leq b$. For a $b$-Harada--Sai factorization 
        \[M_1\stackrel{f_1}{\longrightarrow} M_2\stackrel{f_2}{\longrightarrow}\cdots \stackrel{f_{t(n)-1}}{\longrightarrow}M_{t(n)}\]
        of length $t(n)=(b(d-1)+2)^n$, it holds $\rk(f)\leq b-n$.
    \end{lemma}

    \begin{proof}
        We prove the statement by induction on $n$. For $n=1$ it holds $t(1)=b(d-1)+2$ and we want to show $\rk(f)\leq b-1$. If there exists an $i$ such that $\dim_kH^*(M_i)<b$ we can simply take $F\coloneqq M_i$. Thus we might assume $\dim_kH^*(M_i)=b$ for all $1\leq i\leq t$. Now \cref{FixedDimension} yields the claim.

        Assuming we already proved the statement for $n-1$, we want to prove it for $n$. We first observe that by induction hypothesis there exist factorizations
        \[\begin{tikzcd}
	{M_1} & {M_{t(n-1)}} & {M_{2t(n-1)-1}} & \cdots & {M_{t(1)t(n-1)-(t(1)-1)}} \\
	& {F_1} & {F_2} & \cdots & {F_{t(1)}}
	\arrow["{\tilde{f_{1}}}", from=1-1, to=1-2]
	\arrow["{g_1}"', from=1-1, to=2-2]
	\arrow["{\tilde{f}_{2}}", from=1-2, to=1-3]
	\arrow["{g_2}"'{pos=0.4}, from=1-2, to=2-3]
	\arrow["{\tilde{f_3}}", from=1-3, to=1-4]
	\arrow["{g_3}"'{pos=0.4}, from=1-3, to=2-4]
	\arrow["{\tilde{f}_{t(1)-1}}", from=1-4, to=1-5]
	\arrow["{g_{t(1)}}"'{pos=0.4}, from=1-4, to=2-5]
	\arrow["{h_1}", from=2-2, to=1-2]
	\arrow["{g_2h_1}"', from=2-2, to=2-3]
	\arrow["{h_2}", from=2-3, to=1-3]
	\arrow["{g_3h_2}"', from=2-3, to=2-4]
	\arrow["{g_{t(1)}h_{t(1)-1}}"', from=2-4, to=2-5]
	\arrow["{h_{t(1)}}", from=2-5, to=1-5]
        \end{tikzcd}\]
         with $\dim_kH^*(F_i)\leq b-(n-1)$ and $\tilde{f_i}$ is the composition of the $f_j$ in the respective range. If there exists $i$ such that $\dim_kH^*(F_i)\leq b-n$ we can choose $F\coloneqq F_i$. Thus we may assume that $\dim_kH^*(F_i)=b-(n-1)$. If none of the $g_{i+1}h_i$ are isomorphisms we can apply \cref{FixedDimension} for $b'\coloneqq b-(n-1)$ to this sequence and obtain $\rk(f)\leq b-n$. Otherwise, there exists $i$ such that $g_{i+1}h_i$ is an isomorphism. Hence $h_i$ is a split monomorphism and since $M_{i\cdot t(n-1)-(i-1)}$ is indecomposable, $h_i$ is an isomorphism and in particular $\dim_k H^*( M_{i\cdot t(n-1)-(i-1)})=b-(n-1)$.

         Observe that the above argument works analogously if instead of $M_{1},M_{t(n-1)},\ldots$ we consider $M_k,M_{k+t(n-1)},\ldots $ for $1\leq k\leq (b-(n-1))(d-1)< t(n-1)$. Therefore, we get for each such $k$ an $i_k\neq i_{k'}$ for $k'\neq k$ such that $\dim_kH^*(M_{i_k})=b-(n-1)$. Consider the subsequence of the original sequence which only contains these $M_{i_k}$, obtained by composing the respective $f_j$ between them. This is a sequence of non-isomorphisms as otherwise we would get a split epimorphism and by indecomposability an isomorphism. Thus we can again apply \cref{FixedDimension} to this sequence for $b'=b-(n-1)$ and obtain $\rk(f)\leq b-n$.
    \end{proof}

    \begin{rema}
        Considering the proof, one could reduce the bound by definining for $b,n\in\mathbb{N}$ inductively 
        \[t(b,1)=(d-1)b+2, \quad \text{and} \quad t(b,n+1)=t(b-n,1)\cdot t(b,n).\]
        For readability, and since we are mainly interested in a finite bound, we decided to not state this sharper bound. For the case $d=1$, this bound is known to be sharp, for $d>1$ we do not know if this is the case as well.
    \end{rema}

    Applying the above for $b=n$ we immediately get the following corollary.
    
    \begin{coro}[Harada--Sai Lemma]\label{Harada-Sai}
        Let $b\in\mathbb{N}$ be natural number and
        \[M_1\stackrel{f_1}{\longrightarrow} M_2\stackrel{f_2}{\longrightarrow}\cdots \stackrel{f_{t-1}}{\longrightarrow}M_{t}\]
        a sequence of non-isomorphisms in $\Hc{d}$ such that $M_i$ is indecomposable and $\dim_k H^*(M_i)\leq b$. If the inequality $t \geq (b(d-1)+2)^b\eqqcolon l(b)$ holds, then $f_{t-1}\circ \cdots \circ f_{1}= 0$
    \end{coro}

    In the following, we call a sequence of irreducible morphisms in $\Hc{d}$ of the form
    \[M_1\stackrel{f_1}{\longrightarrow} M_2\stackrel{f_2}{\longrightarrow}\cdots \stackrel{f_{t-1}}{\longrightarrow}M_{t}\]
    with $M_i$ indecomposable, a \definef{sequence of irreducible morphisms} from $M_1$ to $M_t$ of length $t$.

    \begin{lemma}[{\cite[Ch.~IV,Lem.~5.1]{ASS06}}]\label{lem: nazo}
        Let $M,N\in\Hc{d}$ be indecomposable with $\Hc{d}(M,N)\neq 0$. Assume there is no sequence of irreducible morphisms from $M$ to $N$ of length $<t$. 
        \begin{enumerate}
            \item There exists a sequence of irreducible morphisms in $\Hc{d}$
            \[M=M_1\stackrel{f_1}{\longrightarrow} M_2\stackrel{f_2}{\longrightarrow}\cdots \stackrel{f_{t-1}}{\longrightarrow}M_t\]
            and a morphism $g\colon M_t\to N$ in $\Hc{d}$ such that $g\circ f_{t-1}\circ \cdots f_1\neq 0$.
            \item There exists a sequence of irreducible morphisms in $\Hc{d}$
            \[N_t\stackrel{g_{t-1}}{\longrightarrow} N_{t-1}\stackrel{g_{t-2}}{\longrightarrow}\cdots \stackrel{g_{1}}{\longrightarrow}N_1=N\]
            and a morphism $f\colon M\to N_t$ in $\Hc{d}$ such that $g_1\circ\cdots\circ g_{t-1}\circ f\neq 0$.
        \end{enumerate}
    \end{lemma}

    The above statement holds more generally for any Hom-finite Krull–Schmidt category that has source morphisms. The proof is exactly the same as in \cite{ASS06}.

    \begin{lemma}\label{PathLemma}
        Let $\C$ be a \definef{bounded} connected component of the Auslander--Reiten quiver of $\Hc{d}$, that is there exists $b\in\mathbb{N}$, such that $\dim_kH^*(M)\leq b$ for all $[M]\in\C$. Let $M,N\in\Hc{d}$ be indecomposable such that $\Hc{d}(M,N)\neq 0$ and either $[M]\in\C$ or $[N]\in \C$. Then both $[M],[N]\in\C$ and there exists a path of length $<l(b)$ in $\C$ from $[M]$ to $[N]$.
    \end{lemma}

    \begin{proof}
        We consider the case $[M]\in\C$, the other case is dual. Assume there was no sequence of irreducible morphisms of length $<l(b)$ from $M$ to $N$. By \cref{lem: nazo}, there would be a sequence $f=f_{l(b)-1}\circ\cdots\circ f_1\neq 0$ of irreducible morphisms starting in $M$. However, since $\C$ is connected, every object $M_k$ in this sequence is in $\C$ and therefore $\dim_kH^*(M_k)\leq b$. By \cref{Harada-Sai} it holds $f=0$, a contradiction. Hence both $[M],[N]\in\C$ and they are connected by a path of length $<l(b)$ in $\C$.
    \end{proof}

    We get the following analogue of the 1st Brauer--Thrall Conjecture for extended hearts. The proof is similar to {\cite[Ch.~IV,Thm.~5.4]{ASS06}}.

    \begin{theorem}\label{ConnectedTheorem}
        Let $\C_1,\C_2,\dots,\C_m$ be bounded connected components of the Auslander--Reiten quiver of $\Hc{d}$. Let $\C\coloneqq\bigcup_iC_i$ and assume the set of vertices
        \[\left\{\ [t^{> -d}P[i]]\ \colon  P\in\Proj(\Hc{d}) \text{ is indecomposable and } 0\leq i\leq 1-d\  \right\}\]
        is contained in $\C$. Then $\C$ is finite and $\C$ is the whole Auslander--Reiten quiver of $\Hc{d}$. 
    \end{theorem}  
    
    \begin{proof}
        Let $N\neq 0\in\Hc{d}$. Then there exists $P\in\proj(\Hc{d})$ indecomposable and $0\leq i\leq d-1$ such that $\Hc{d}(t^{>-d}P[i],N)\neq 0$. By \cref{PathLemma}, it holds $[N]\in\C$ since $t^{>-d}P[i]$ is contained in $\C_i$ for some $i$ by assumption. Thus, it follows that $\C$ is the full Auslander--Reiten quiver.  

        It remains to show that $\C_i$ is finite. By assumption there exists $b\in\mathbb{N}$ with $\dim_kH^*(M)\leq b$ for all $[M]\in \C$. For every $[N]\in\C_i$, by \cref{PathLemma} there exists a path of length $<l(b)$ from $[\trunc^{>d}(P[i])]$ to $[N]$. However, there are only finitely many indecomposable projectives and the Auslander--Reiten quiver is a locally finite quiver, hence there are only finitely many paths of length $<l(b)$ starting in an object of the form $\trunc^{>-d}(P[i])$. Thus $\C_i$ and hence $\C$ is finite.
    \end{proof}

    \begin{definition}
        We call $A$ \definef{$d$-connected} if the set
        \[\left\{\ [t^{> -d}P[i]]\ \colon  P\in\Proj(\Hc{d}) \text{ is indecomposable and } 0\leq i\leq 1-d\  \right\}\]
        is connected in $\Hc{d}$, that is between any two objects $X,Y$ in this set there exists a zig-zag of non-zero morphisms between indecomposable objects $X_k\in\Hc{d}$. We will denote this by 
        \[\begin{tikzcd}
        	X & {X_1} & {X_2} & \cdots & {X_n} & Y
        	\arrow[no head, from=1-1, to=1-2]
        	\arrow[no head, from=1-2, to=1-3]
        	\arrow[no head, from=1-3, to=1-4]
        	\arrow[no head, from=1-4, to=1-5]
        	\arrow[no head, from=1-5, to=1-6]
        \end{tikzcd}.\]
        Here the arrows without orientation represent a non-zero morphism with some orientation.
    \end{definition}

    \begin{rema}
        $A$ is $1$-connected if and only if $H^0(A)$ is connected. Indeed, for any zig-zag of morphisms we can replace each object in $\Mod(H^0(A))$ by its projective cover to obtain a zig-zag that only contains indecomposable projectives. 
        \[\begin{tikzcd}
        	& {P_1} & {P_2} & \cdots & {P_n} \\
        	P & {X_1} & {X_2} & \cdots & {X_n} & Q
        	\arrow[no head, from=1-2, to=1-3]
        	\arrow[two heads, from=1-2, to=2-2]
        	\arrow[no head, from=1-3, to=1-4]
        	\arrow[two heads, from=1-3, to=2-3]
        	\arrow[no head, from=1-4, to=1-5]
        	\arrow[two heads, from=1-5, to=2-5]
        	\arrow[no head, from=1-5, to=2-6]
        	\arrow[no head, from=2-1, to=1-2]
        	\arrow[no head, from=2-1, to=2-2]
        	\arrow[no head, from=2-2, to=2-3]
        	\arrow[no head, from=2-3, to=2-4]
        	\arrow[no head, from=2-4, to=2-5]
        	\arrow[no head, from=2-5, to=2-6]
        \end{tikzcd}\]
    \end{rema}

    \begin{theorem}\label{BT1}
        Let $A$ be $d$-connected and $\C$ a bounded connected component of the Auslander--Reiten quiver of $\Hc{d}$. Then $\C$ is finite and $\C$ is the whole of the Auslander--Reiten quiver of $\Hc{d}$.
    \end{theorem}

    \begin{proof}
        We need to show that it holds $\trunc^{>-d}(P[i])\in\C$ for $P\in\Proj(\Hc{d})$ indecomposable and ${0\leq i<d}$. For $[N]\in\C$ there exists $P(N)\in\Proj(\Hc{d})$ indecomposable and $0\leq k<d$ such that ${\Hc{d}(\trunc^{>-d}(P(N)[k]),N)\neq 0}$. By \cref{PathLemma} $[\trunc^{>-d}(P(N)[k])]\in \C$. By $d$-connectedness and \cref{PathLemma} all $\trunc^{>-d}(P[i])$ are in $\C$.
    \end{proof}

    \begin{rema}
        In \cref{ExNot2Conn} we define a dg algebra that is not $2$-connected and not the product of $2$-connected algebras but satisfies the assumptions of \cref{ConnectedTheorem}. Hence \cref{ConnectedTheorem} is indeed more general than \cref{BT1}.
    \end{rema}

    The following proposition gives many examples of $d$-connected algebras.

    \begin{prop}\label{PropExamplesdConn}
        Assume that $H^0(A)$ is connected. If one of the following conditions holds, then $A$ is $d$-connected.
        \begin{enumerate}
            \item\label{ItemH} It holds $H^{-1}(A)\neq 0$.
            \item\label{ItemSimple} For the sum of simple $H^0(A)$-modules $S\coloneqq \bigoplus_{i=1}^nS_i$ it holds $\Hc{d}(S,S[1])\neq 0$.
        \end{enumerate}
    \end{prop}

    \begin{proof}
        We first observe that as $\proj(\Hc{d})\simeq\mathrm{proj}(H^0(A))$, it follows that $\proj(\Hc{d})$ is connected. By \cref{TruncIso} it holds for all $P,P'\in\proj(\Hc{d})$ and $0\leq i<d$
        \[\Hom_A(\trunc^{>-d}(P[i]),\trunc^{>-d}(P'[i]))\cong \Hom_A(P[i],P'[i]) \cong\Hom_A(P,P').\]
        Thus the set 
        \[\mathcal{L}_i\coloneqq\left\{\ [t^{> -d}P[i]]\ \colon  P\in\Proj(\Hc{d}) \text{ is indecomposable}  \right\}\]
        is connected for every $0\leq i<d-1$.
    
        First assume that (\ref{ItemH}) holds. Then, by \cref{TruncIso} it holds for all $1\leq i<d$
        \[\Hom_A(\trunc^{>-d}(A[i]),\trunc^{>-d}(A[i-1]))\cong\Hom_A(A[i],A[i-1])\cong \Hom_A(A,A[-1])\cong H^{-1}(A)\neq 0.\]
        Hence, there exist $P,P'\in\proj(\Hc{d})$ such that 
        \[\Hc{d}(\trunc^{>-d}(P[i]),\trunc^{>-d}(P'[i-1]))\neq 0\]
         which lets us pass between $\mathcal{L}_i$ and $\mathcal{L}_{i-1}$. Hence $A$ is $d$-connected.
        
        Assume now that (\ref{ItemSimple}) holds.  By assumption, there exist simple $H^0(A)$-modules $S,S'$ such that $\Hc{d}(S[i],S'[i+1])\neq 0$ for all $0\leq i<d-1$. For the corresponding indecomposable projectives $P,P'\in\Hc{d}$ it holds
        \[\Hc{d}(\trunc^{>-d}(P[i]),S[i])=\Hom_A(H^0(P),H^0(S))\neq 0, \quad \Hc{d}(\trunc^{>-d}(P'[i+1]),S'[i+1])\neq 0 \]
        which lets us pass between $\mathcal{L}_i$ and $\mathcal{L}_{i+1}$. Hence $A$ is $d$-connected.
    \end{proof}

    \begin{rema}\label{RemarkSemisimple}
        Condition (\ref{ItemSimple}) in the above proposition holds, whenever $H^0(A)$ is connected and not semisimple. Indeed, in that case there exists a non-split extension between simples, as otherwise every $H^0(A)$-module would be a direct sum of simples. However, not every $d$-connected $A$ has to satisfy that $H^0(A)$ is connected, as illustrated in \cref{ExampleH0NotConn}
    \end{rema}

    \section{Explicit Description of the Auslander--Reiten Translation}

    In this section, we discuss the explicit calculation of Auslander--Reiten translations $\tau$ inside of $\Hc{d}$. We first describe the general procedure and illustrate it with an explicit example afterwards. We start with $M\in\Hc{d}$.

    \textbf{First Step}: Compute a $d$-term projective resolution of $M$ as in the proof of \cref{d-res-lemma}, that is a sequence of conflations 
    \[\Omega^iM\stackrel{\iota^i}{\longrightarrow} P^{i-1}\stackrel{p^{i-1}}{\longrightarrow} \Omega^{i-1}M\dashrightarrow \text{ for } 1\leq i\leq d+1,\]
    where $\Omega^0(M)\coloneqq M$ and $P^{i}\in\proj(\Hc{d})$.

    \textbf{Second Step}: Apply the exact functor $\nu_d\coloneqq [d-1]\circ \nu_A$ to obtain
    \[\nu_d(\Omega^iM)\stackrel{\nu_d(\iota^i)}{\longrightarrow} I^{i-1}\stackrel{\nu_d(p^{i-1})}{\longrightarrow} \nu_d(\Omega^{i-1}M)\dashrightarrow \text{ for } 1\leq i\leq d+1,\]
    where $I^i\coloneqq \nu_d(P^{i})\in\inj(\Hc{d})$.

    \textbf{Third Step}: Define $\ker=\trunc^{\leq 0}\circ\Cocone$ and inductively
    \[\Sigma^{1}:=\ker (\nu_d(\iota^1)\circ \nu_d(p^1)), \quad \Sigma^{i+1}:=\ker (I^{i+1}\to \Sigma^{i} ) \text{ for } 1\leq i\leq d-1.\]
    We claim that $\Sigma^d$ computes $\tau M$, that is $\Sigma^d\cong \tau M$.  

    To this end, in the above situation, we define inductively 
    \[I^{[0,1]}:=\Cocone(I^1\to I^0), \quad I^{[0,i]}:=\Cocone(I^{i+1}\to I^{[0,i]}) \text{ for }i\geq 1.\] 
    Recall from the proof of \cref{d-res-lemma}, that $P^{[-i,0]}$ was defined inductively by 
    \[P^{[-1,0]}\coloneqq \cone(P^{-1}\to P^0), \quad P^{[-(i+1),0]}\coloneqq \cone(P^{i+1}[i]\to P^{[-i,0]}) \text{ for }i\geq 1.\]
    Hence $\nu_d(P^{[-i,0]})\cong I^{[0,i]}[-i]$ and in particular, by definition of $\tau$,
    \[\trunc^{\leq 0}I^{[0,d]}\cong\trunc^{\leq 0}\left(\nu_d(P^{[-d,0]})[-d] \right)\cong \trunc^{\leq 0}\left(\nu_A(P^{[-d,0]})[-1]\right)\cong\trunc^{<d}\left(\nu_A(P^{[-d,0]})[-d]\right)[d-1] \cong \tau M.\]  
    
    Thus, it remains to show the following proposition, which is a dual version of \cref{d-res-lemma}.

    \begin{prop}
        There is an isomorphism $\trunc^{\leq{0}}I^{[0,i]}\cong \Sigma^i$ for $1\leq i\leq d$. 
    \end{prop}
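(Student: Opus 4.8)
The plan is to establish the isomorphisms $\trunc^{\leq 0}I^{[0,i]}\cong\Sigma^{i}$ by induction on $i\in\{1,\dots,d\}$; informally, the assertion is that the truncation functor $\trunc^{\leq 0}$ commutes, up to isomorphism, with the iterated cocone construction producing the objects $I^{[0,\bullet]}$. For the base case $i=1$ there is nothing to prove: by definition $\Sigma^{1}=\ker\bigl(\nu_{d}(\iota^{1})\circ\nu_{d}(p^{1})\bigr)=\trunc^{\leq 0}\Cocone\bigl(\nu_{d}(\iota^{1}\circ p^{1})\bigr)$, while $I^{[0,1]}=\Cocone(I^{1}\to I^{0})$, where the morphism $I^{1}\to I^{0}$ is $\nu_{d}$ applied to the differential $\iota^{1}\circ p^{1}\colon P^{1}\to P^{0}$ of the two-term complex $P^{[-1,0]}=\cone(\iota^{1}\circ p^{1})$; hence $\Sigma^{1}=\trunc^{\leq 0}I^{[0,1]}$.

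For the inductive step, fix $1\leq i\leq d-1$ and assume $\Sigma^{i}\cong\trunc^{\leq 0}I^{[0,i]}$. The first observation is that $I^{i+1}=\nu_{d}(P^{i+1})\in\inj(\Hc{d})\subseteq\Hc{d}\subseteq\fd(A)^{\leq 0}$. Since $\trunc^{\leq 0}$ is right adjoint to the inclusion $\fd(A)^{\leq 0}\hookrightarrow\fd(A)$, the structure morphism $\phi\colon I^{i+1}\to I^{[0,i]}$ defining $I^{[0,i+1]}=\Cocone(\phi)$ factors uniquely through the truncation counit $c\colon\trunc^{\leq 0}I^{[0,i]}\to I^{[0,i]}$, say $\phi=c\circ\psi$ with $\psi\colon I^{i+1}\to\trunc^{\leq 0}I^{[0,i]}\cong\Sigma^{i}$; unwinding the definitions, this $\psi$ is precisely the morphism appearing in $\Sigma^{i+1}=\trunc^{\leq 0}\Cocone(\psi)$. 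The second observation is that the standard truncation triangle gives $\cone(c)\cong\trunc^{>0}I^{[0,i]}\in\fd(A)^{\geq 1}$. Applying the octahedral axiom to the composite $I^{i+1}\xrightarrow{\psi}\trunc^{\leq 0}I^{[0,i]}\xrightarrow{c}I^{[0,i]}$ and rotating therefore yields a triangle in $\fd(A)$
\[\Cocone(\psi)\longrightarrow I^{[0,i+1]}\longrightarrow\bigl(\trunc^{>0}I^{[0,i]}\bigr)[-1]\longrightarrow\Cocone(\psi)[1].\]

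It then remains to take cohomology in non-positive degrees. Since $\bigl(\trunc^{>0}I^{[0,i]}\bigr)[-1]$ lies in $\fd(A)^{\geq 2}$, its cohomology in degrees $k$ and $k-1$ vanishes for all $k\leq 0$, so the long exact sequence of the triangle above shows that $\Cocone(\psi)\to I^{[0,i+1]}$ induces isomorphisms on $H^{k}$ for every $k\leq 0$. Applying the functor $\trunc^{\leq 0}$ then gives $\Sigma^{i+1}=\trunc^{\leq 0}\Cocone(\psi)\cong\trunc^{\leq 0}I^{[0,i+1]}$, which closes the induction; combined with the identification $\trunc^{\leq 0}I^{[0,d]}\cong\tau M$ recorded above, this proves $\Sigma^{d}\cong\tau M$, as desired. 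I expect the only genuinely delicate point to be the bookkeeping in the second paragraph, namely the verification that the (only schematically written) morphisms $I^{i+1}\to\Sigma^{i}$ and $I^{i+1}\to I^{[0,i]}$ occurring in the two recursions really do agree after post-composing the former with the counit $c$. This should be automatic once one traces both morphisms back to the injective coresolution of the Second Step, since $\Hom_{\fd(A)}(I^{i+1},-)$ cannot distinguish $I^{[0,i]}$ from $\trunc^{\leq 0}I^{[0,i]}$; the remaining steps are formal manipulations with the octahedral axiom and the truncation functor.
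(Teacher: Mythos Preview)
Your proof is correct and follows essentially the same route as the paper: both argue by induction, apply the octahedral axiom to the factorisation $I^{i+1}\xrightarrow{\psi}\trunc^{\leq 0}I^{[0,i]}\to I^{[0,i]}$ (using the induction hypothesis $\Sigma^{i}\cong\trunc^{\leq 0}I^{[0,i]}$), and conclude via a truncation/cohomology argument that $\trunc^{\leq 0}\Cocone(\psi)\cong\trunc^{\leq 0}I^{[0,i+1]}$. The only cosmetic difference is that the paper phrases the octahedral step in terms of cones and then shifts back, whereas you work directly with cocones; the ``delicate point'' you flag about identifying the morphism $I^{i+1}\to\Sigma^{i}$ with the truncation factorisation is equally implicit in the paper's diagram and is handled the same way.
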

    
    \begin{proof}
    We prove the claim by induction on $i$. For $i=1$, the claim follows by definition. For $i\geq 2$, by induction and the octahedral axiom we have the following diagram of triangles:
    $$
    \begin{tikzcd}
    I^i&\Sigma^{{i-1}}& \cone(I^i\to\Sigma^{i-1})&I^i[1]\\
    I^i&I^{[0,i-1]}&I^{[0,i]}[1]&I^i[1]\\
    &\trunc^{\geq 1}I^{[0,i-1]}&\trunc^{\geq 1}I^{[0,i-1]}\\
    & \Sigma^{{i-1}}[1] & \cone(I^i\to\Sigma^{i-1})[1]
    \Ar{1-1}{1-2}{}
    \Ar{1-2}{1-3}{}
    \Ar{1-3}{1-4}{}
    \Ar{2-1}{2-2}{}
    \Ar{2-2}{2-3}{}
    \Ar{2-3}{2-4}{}
    \Ar{3-2}{3-3}{equal}
    \Ar{1-1}{2-1}{equal}
    \Ar{1-4}{2-4}{equal}
    \Ar{1-2}{2-2}{}
    \Ar{1-3}{2-3}{dashed}
    \Ar{2-2}{3-2}{}
    \Ar{2-3}{3-3}{dashed}
    \Ar{3-2}{4-2}{}
    \Ar{3-3}{4-3}{dashed}
    \Ar{4-2}{4-3}{}
    \end{tikzcd}
    $$
    Since $H^{\geq 1}(I^i)=0$, the morphism $H^{\geq 1}(I^{[0,i-1]})\to H^{\geq 1}(I^{[0,i]}[1])$ is an isomorphism. Therefore $\cone(I^{i}\to\Sigma^{i-1})\cong \trunc^{\leq 0}(I^{[0,i]}[1])$ and thus
    \begin{align*}
    \Sigma^i \coloneqq & \trunc^{\leq 0}\Cocone(I^i\to\Sigma^{i-1}) \\
    \cong & \trunc^{\leq 0}\left(\cone\left(I^i\to\Sigma^{i-1}\right)[-1]\right)\\
    \cong & \trunc^{\leq 0}\left(\trunc^{\leq 0}\left(I^{[0,i]}[1]\right)[-1]\right)\\
    \cong & \trunc^{\leq 0}\left(\trunc^{\leq 1}I^{[0,i]}\right) \cong \trunc^{\leq 0}I^{[0,i]}.
    \end{align*}
    \end{proof}

    \begin{ex}\label{ExplicitComputation}
        Let $A$ be the formal dg algebra given by the path algebra of a graded quiver 
        \[A\coloneqq k[{\footnotesize
        \begin{tikzcd}[column sep =13]
            1 & 2
            \Ar{1-1}{1-2}{"\alpha", shift left=1}
            \Ar{1-1}{1-2}{"\beta"', shift right=1,red}
        \end{tikzcd}}],\]
    where $\alpha$ is of degree $0$ and $\beta$ of degree $-1$. As $A$ is concentrated in degrees $-1,0$ we are interested in $\Hc{2}$, that is $d=2$. We have decompositions: 
    \[A=e_2A \oplus e_1A =
    \begin{xsmallmatrix}
    2
    \end{xsmallmatrix}
    \oplus
    \begin{xsmallmatrix}
    &1&\\
    2&&\red{2}
    \end{xsmallmatrix}, 
    \ \nu_2A=\nu_2(Ae_2)\oplus \nu_2(Ae_1)=
    \begin{xsmallmatrix}
    1&&\red{1}\\
    &\red{2}&
    \end{xsmallmatrix}
    \oplus
    \begin{xsmallmatrix}
    \red{1}
    \end{xsmallmatrix}.
    \]
    Here, we denote dg modules by their graded module of cohomologies. By \cite[Thm.~3.6]{KYZ09}, dg modules are determined by their cohomologies up to isomorphism since $A$ is graded hereditary. Red numbers denote degree $-1$ cohomologies and black numbers denote degree $0$ cohomologies. We consider  
    $\begin{xsmallmatrix}
    &1&&\red{1}\\
    2&&\red{2}
    \end{xsmallmatrix}\in\Hc{2}$
    and caluculate 
    $\tau(\begin{xsmallmatrix}
    &1&&\red{1}\\
    2&&\red{2}
    \end{xsmallmatrix})$

    \textbf{First Step}: Calculate a $3$-term projective resolution of 
    $\begin{xsmallmatrix}
        &1&&\red{1}\\
        2&&\red{2}
        \end{xsmallmatrix}$. We have the following conflations: 
    \[
    \begin{xsmallmatrix}
    1
    \end{xsmallmatrix}
    \to
    \begin{xsmallmatrix}
    &1&\\
    2&&\red{2}
    \end{xsmallmatrix}
    \to
    \begin{xsmallmatrix}
        &1&&\red{1}\\
        2&&\red{2}
        \end{xsmallmatrix}
    \dasharrow{},\ 
    \begin{xsmallmatrix}
    2
    \end{xsmallmatrix}
    \oplus
    \begin{xsmallmatrix}
    \red{2}
    \end{xsmallmatrix}
    \to
    \begin{xsmallmatrix}
        &1&\\
        2&&\red{2}
    \end{xsmallmatrix}
    \to
    \begin{xsmallmatrix}
    1
    \end{xsmallmatrix}
    \dasharrow{}
    ,\ 
    \begin{xsmallmatrix}
    2
    \end{xsmallmatrix}
    \xrightarrow{0}
    \begin{xsmallmatrix}
    2
    \end{xsmallmatrix}
    \xrightarrow{{\tiny\begin{bmatrix}1\\0\end{bmatrix}}}
    \begin{xsmallmatrix}
    2
    \end{xsmallmatrix}
    \oplus
    \begin{xsmallmatrix}
    \red{2}
    \end{xsmallmatrix}
    \dasharrow{}
    \]
    
    \leaveout{Thus we get a projective presentaion of $\begin{xsmallmatrix}
        &1&&\red{1}\\
        2&&\red{2}
        \end{xsmallmatrix}$ as follows:
    $$
    \begin{tikzcd}
    {\begin{xsmallmatrix}
    2
    \end{xsmallmatrix}}
    &
    {\begin{xsmallmatrix}
    &1&\\
    2&&\red{2}
    \end{xsmallmatrix}}
    &
    {\begin{xsmallmatrix}
        &1&\\
        2&&\red{2}
        \end{xsmallmatrix}}
    \Ar{1-1}{1-2}{}
    \Ar{1-2}{1-3}{"0"}
    \Ar{1-1}{1-3}{red, bend left=30}
    \end{tikzcd}
    $$}
    \textbf{Second Step}: By applying the exact functor $\nu_2$, we obtain: 
    \[
    \nu_2 (\begin{xsmallmatrix}
    1
    \end{xsmallmatrix})
    \to
    \begin{xsmallmatrix}
    \red{1}
    \end{xsmallmatrix}
    \to
    \nu_2\left(\begin{xsmallmatrix}
        &1&&\red{1}\\
        2&&\red{2}
        \end{xsmallmatrix}\right)
    \dasharrow{},\ 
    \begin{xsmallmatrix}
        1&&\red{1}\\
        &\red{2}&
    \end{xsmallmatrix}
    \oplus
    \begin{xsmallmatrix}
        1&&\red{1}\\
        &\red{2}&
    \end{xsmallmatrix}[1]
    \to
    \begin{xsmallmatrix}
        \red{1}
    \end{xsmallmatrix}
    \to
    \nu_2(\begin{xsmallmatrix}
    1
    \end{xsmallmatrix})
    \dasharrow{}
    ,\ 
    \begin{xsmallmatrix}
        1&&\red{1}\\
        &\red{2}&
    \end{xsmallmatrix}
    \xrightarrow{0}
    \begin{xsmallmatrix}
        1&&\red{1}\\
        &\red{2}&
    \end{xsmallmatrix}
    \xrightarrow{{\tiny\begin{bmatrix}1\\0\end{bmatrix}}}
    \begin{xsmallmatrix}
        1&&\red{1}\\
        &\red{2}&
    \end{xsmallmatrix}
    \oplus
    \begin{xsmallmatrix}
        1&&\red{1}\\
        &\red{2}&
    \end{xsmallmatrix}[1]
    \dasharrow{}
    \]
    
    \textbf{Third Step}: We first observe that the composition $\begin{xsmallmatrix}&1&\\2&&\red{2}\end{xsmallmatrix}\to\begin{xsmallmatrix}1\end{xsmallmatrix}\to \begin{xsmallmatrix}&1&\\2&&\red{2}\end{xsmallmatrix}$ vanishes, hence $\Sigma^1=\trunc^{\leq 0}\Cocone(\begin{xsmallmatrix}
    \red{1}
    \end{xsmallmatrix}\stackrel{0}{\to}\begin{xsmallmatrix}
    \red{1}
    \end{xsmallmatrix})$. The long exact sequence in cohomology yields

    \[0=H^{-2}(\begin{xsmallmatrix}
    \red{1}
    \end{xsmallmatrix})\to H^{-1}(\Sigma^1)\stackrel{\cong}{\to} H^{-1}(\begin{xsmallmatrix}
    \red{1}
    \end{xsmallmatrix}) \stackrel{0}{\to} H^{-1}(\begin{xsmallmatrix}
    \red{1}
    \end{xsmallmatrix})\twoheadrightarrow H^0(\Sigma^1)\to H^{0}(\begin{xsmallmatrix}
    \red{1}
    \end{xsmallmatrix})=0.\]
    Hence $\Sigma^1=\begin{xsmallmatrix}
    1
    \end{xsmallmatrix}\oplus\begin{xsmallmatrix}
    \red{1}
    \end{xsmallmatrix}$. Similarly we get for $\begin{xsmallmatrix}&1&&\red{1}\\&&\red{2}\end{xsmallmatrix}\to \begin{xsmallmatrix}
    1
    \end{xsmallmatrix}\oplus\begin{xsmallmatrix}
    \red{1}
    \end{xsmallmatrix}$ the exact sequences:
    $$
    \begin{tikzcd}[column sep=10]
    0
    &
    H^{-1}(\Sigma^2)
    &
    H^{-1}(\begin{xsmallmatrix}
    &1&&\red{1}\\
    &&\red{2}
    \end{xsmallmatrix}) 
    &
    H^{-1}(\begin{xsmallmatrix}
    1
    \end{xsmallmatrix}\oplus \begin{xsmallmatrix}
    \red{1}
    \end{xsmallmatrix})
    &
    H^0(\Sigma^2)
    &
    H^{0}(\begin{xsmallmatrix}
    &1&&\red{1}\\
    &&\red{2}
    \end{xsmallmatrix}) 
    &
    H^{0}(\begin{xsmallmatrix}
    1
    \end{xsmallmatrix}\oplus\begin{xsmallmatrix}
    \red{1}
    \end{xsmallmatrix})
    \\
    0
    &
    H^{-1}(\Sigma^2)
    &
    \begin{xsmallmatrix}
    1\\2
    \end{xsmallmatrix}
    &
    \begin{smallmatrix}
    1
    \end{smallmatrix}
    &
    H^0(\Sigma^2)
    &
    \begin{xsmallmatrix}
    1
    \end{xsmallmatrix}
    &
    \begin{xsmallmatrix}
    1
    \end{xsmallmatrix}
    \Ar{1-2}{2-2}{equal}
    \Ar{1-3}{2-3}{"\cong"}
    \Ar{1-4}{2-4}{"\cong"}
    \Ar{1-5}{2-5}{equal}
    \Ar{1-6}{2-6}{"\cong"}
    \Ar{1-7}{2-7}{"\cong"}
    \Ar{1-1}{1-2}{}
    \Ar{1-2}{1-3}{}
    \Ar{1-3}{1-4}{}
    \Ar{1-4}{1-5}{}
    \Ar{1-5}{1-6}{}
    \Ar{1-6}{1-7}{}
    \Ar{2-1}{2-2}{}
    \Ar{2-2}{2-3}{hookrightarrow}
    \Ar{2-3}{2-4}{->>}
    \Ar{2-4}{2-5}{"0"}
    \Ar{2-5}{2-6}{"0"}
    \Ar{2-6}{2-7}{"\cong"}
    \end{tikzcd}
    $$
    
    in $\mathrm{mod}\, H^0(A)=\mathrm{mod}\,k[{\footnotesize\begin{tikzcd}[sep = 10]1&2\Ar{1-1}{1-2}{"\alpha"}
    \end{tikzcd}]}$.  Thus we conclude that $\Sigma^2=\tau(\begin{xsmallmatrix}
            &1&&\red{1}\\
            2&&\red{2}
            \end{xsmallmatrix})=\begin{xsmallmatrix}
            \red{2}
            \end{xsmallmatrix}$ 
    \end{ex}

\section{Examples of Auslander--Reiten quivers}

In this section, we assume that $k$ is algebraically closed for simplicity. 

    \begin{ex}
        Let $A$ be the formal dg algebra given by the path algebra of a graded quiver
        \[A\coloneqq k[{\footnotesize
        \begin{tikzcd}[column sep =13]
        1 & 2
        \Ar{1-1}{1-2}{"\alpha", shift left=1}
        \Ar{1-1}{1-2}{"\beta"', shift right=1,red}
        \end{tikzcd}}]\]
        where $\alpha$ is of degree $0$ and $\beta$ of degree $-1$ (see \cref{ExplicitComputation}). Then, the Auslander--Reiten quiver of $\Hc{2}$ looks as follows:
        \[\begin{tikzcd}[column sep =10, row sep =10]
        {\begin{xsmallmatrix}
        1
        \end{xsmallmatrix}} 
        &&
        {\begin{xsmallmatrix}
        \red{1}\\\red{2}
        \end{xsmallmatrix}}
        &&
        {\begin{xsmallmatrix}
        1\\2
        \end{xsmallmatrix}}
        &&
        {\begin{xsmallmatrix}
        \red{2}
        \end{xsmallmatrix}}
        \\
        \cdots&
        {\begin{xsmallmatrix}
        \red{2}
        \end{xsmallmatrix}}
        &&
        {\begin{xsmallmatrix}
        &1&&\red{1}\\2&&\red{2}&
        \end{xsmallmatrix}}
        &&
        {\begin{xsmallmatrix}
        1
        \end{xsmallmatrix}}
        &\cdots\\
        &&
        \mid {\begin{xsmallmatrix}
        &1&\\2&&\red{2}
        \end{xsmallmatrix}}
        &&
        {\begin{xsmallmatrix}
        1&&\red{1}\\ &\red{2}&
        \end{xsmallmatrix}} \mid
        &&\\
        &
        \mid {\begin{xsmallmatrix}
        2
        \end{xsmallmatrix}}
        &&
        {\begin{xsmallmatrix}
        1\\\red{2}
        \end{xsmallmatrix}}
        &&
        {\begin{xsmallmatrix}
        \red{1}
        \end{xsmallmatrix}} \mid
        & 
        \Ar{1-1}{2-2}{}
        \Ar{1-3}{2-4}{}
        \Ar{1-5}{2-6}{}
        \Ar{2-2}{1-3}{}
        \Ar{2-4}{1-5}{}
        \Ar{2-6}{1-7}{}
        \Ar{2-2}{3-3}{}
        \Ar{2-4}{3-5}{}
        \Ar{3-3}{2-4}{}
        \Ar{3-5}{2-6}{}
        \Ar{3-3}{4-4}{}
        \Ar{3-5}{4-6}{}
        \Ar{4-2}{3-3}{}
        \Ar{4-4}{3-5}{}
        \tauAr{1-3}{1-1}{}
        \tauAr{1-5}{1-3}{}
        \tauAr{1-7}{1-5}{}
        \tauAr{2-4}{2-2}{}
        \tauAr{2-6}{2-4}{}
        \tauAr{3-5}{3-3}{}
        \tauAr{4-4}{4-2}{}
        \tauAr{4-6}{4-4}{}
        \end{tikzcd}\]

        To prove this we first observe that by \cref{prop: sink of proj}, we have a sink morphism 
        $
        \begin{xsmallmatrix}
        2
        \end{xsmallmatrix}
        \oplus
        \begin{xsmallmatrix}
        \red{2}
        \end{xsmallmatrix}
        \to
        \begin{xsmallmatrix}
        &1&\\2&&\red{2}&
        \end{xsmallmatrix}
        $ and $\begin{xsmallmatrix}2\end{xsmallmatrix}$ has no irreducible morphims from indecomposable objects. Thus, by \cref{prop: iiyatsu}, we can conclude that 
        $
        \begin{xsmallmatrix}
        2
        \end{xsmallmatrix}
        \to
        \begin{xsmallmatrix}
        &1&\\2&&\red{2}&
        \end{xsmallmatrix}
        $
        is a source morphism. As
        $\Coker(\begin{xsmallmatrix}
        2
        \end{xsmallmatrix}
        \to
        \begin{xsmallmatrix}
        &1&\\2&&\red{2}&
        \end{xsmallmatrix})=
        \begin{xsmallmatrix}
        1\\\red{2}
        \end{xsmallmatrix}
        $, we have an almost-split conflation 
        \[\begin{xsmallmatrix}
        2
        \end{xsmallmatrix}
        \to
        \begin{xsmallmatrix}
        &1&\\2&&\red{2}&
        \end{xsmallmatrix}
        \to
        \begin{xsmallmatrix}
        1\\\red{2}
        \end{xsmallmatrix}
        \dashrightarrow.\]
        
        By \cref{ExplicitComputation}, we have $\tau(\begin{xsmallmatrix}&1&&\red{1}\\2&&\red{2}&\end{xsmallmatrix}) \cong \begin{xsmallmatrix}\red{2}\end{xsmallmatrix}$. Thus, we can easily verify the existence of the following almost-split conflation:
        \[
        \begin{xsmallmatrix}
        \red{2}
        \end{xsmallmatrix}
        \to
        \begin{xsmallmatrix}
        \red{1}\\\red{2}
        \end{xsmallmatrix}
        \oplus
        \begin{xsmallmatrix}
        &1&\\2&&\red{2}
        \end{xsmallmatrix}
        \to
        \begin{xsmallmatrix}
        &1&&\red{1}\\2&&\red{2}&
        \end{xsmallmatrix}
        \dashrightarrow
        \]
        
        Since $\begin{xsmallmatrix}\red{2}\end{xsmallmatrix}\to\begin{xsmallmatrix}\red{1}\\\red{2}\end{xsmallmatrix}\oplus\begin{xsmallmatrix}&1&\\2&&\red{2}\end{xsmallmatrix}$
        is a source morphism, 
        $\begin{xsmallmatrix}\red{2}\end{xsmallmatrix}\to\begin{xsmallmatrix}\red{1}\\\red{2}\end{xsmallmatrix}$
        is a sink morphism. Thus, there exists the following almost-split conflation.
        \[ \begin{xsmallmatrix}1\end{xsmallmatrix}\to\begin{xsmallmatrix}\red{2}\end{xsmallmatrix}\to\begin{xsmallmatrix}\red{1}\\\red{2}\end{xsmallmatrix}\dashrightarrow\]
        
        The rest of the Auslander--Reiten quiver can now be determined by knitting.
        
        By the above discussion, we have a connected component of the Auslander--Reiten quiver and since $H^0(A)$ is connected and not semisimple, by \cref{RemarkSemisimple} and \cref{BT1} it is the whole Auslander--Reiten quiver. 

        Similarly, the Auslander--Reiten quiver of $\Hc{3}$ looks as follows, where degree $-2$ cohomologies are depicted in blue:
        $$
        \begin{tikzcd}[column sep =10, row sep =10]
        {\begin{xsmallmatrix}
        \red{1}
        \end{xsmallmatrix}}
        &&
        {\begin{xsmallmatrix}
        \blue{1}\\ \blue{2}
        \end{xsmallmatrix}}
        &&
        {\begin{xsmallmatrix}
        \red{1}\\ \red{2}
        \end{xsmallmatrix}}
        &&
        {\begin{xsmallmatrix}
        1\\2
        \end{xsmallmatrix}}
        &&
        {\begin{xsmallmatrix}
        \red{2}
        \end{xsmallmatrix}}
        \\
        \cdots&
        {\begin{xsmallmatrix}
        \blue{2}
        \end{xsmallmatrix}}
        &&
        {\begin{xsmallmatrix}
        &\red{1}&&\blue{1}\\
        \red{2}&&\blue{2}
        \end{xsmallmatrix}}
        &&
        {\begin{xsmallmatrix}
        &1&&\red{1}\\
        2&&\red{2}
        \end{xsmallmatrix}}
        &&
        {\begin{xsmallmatrix}
        1
        \end{xsmallmatrix}}
        &\cdots\\
        {\begin{xsmallmatrix}
        1
        \end{xsmallmatrix}}
        &&
        {\begin{xsmallmatrix}
        &\red{1}&\\
        \red{2}&&\blue{2}
        \end{xsmallmatrix}}
        &&
        {\begin{xsmallmatrix}
        &1&&\red{1}&&\blue{1}\\
        2&&\red{2}&&\blue{2}&
        \end{xsmallmatrix}}
        &&
        {\begin{xsmallmatrix}
        1&&\red{1}\\
        &\red{2}&
        \end{xsmallmatrix}}
        &&
        {\begin{xsmallmatrix}
        \blue{2}
        \end{xsmallmatrix}}
        \\\cdots&
        {\begin{xsmallmatrix}
        \red{2}
        \end{xsmallmatrix}}
        &&
        {\begin{xsmallmatrix}
        &1&&\red{1}&\\
        2&&\red{2}&&\blue{2}
        \end{xsmallmatrix}}
        &&
        {\begin{xsmallmatrix}
        1&&\red{1}&&\blue{1}\\
        &\red{2}&&\blue{2}&
        \end{xsmallmatrix}}
        &&
        {\begin{xsmallmatrix}
        \red{1}
        \end{xsmallmatrix}}
        &\cdots\\&&
        \mid {\begin{xsmallmatrix}
        &1&\\
        2&&\red{2}
        \end{xsmallmatrix}}
        &&
        {\begin{xsmallmatrix}
        1&&\red{1}&&\\
        &\red{2}&&\blue{2}
        \end{xsmallmatrix}}
        &&
        {\begin{xsmallmatrix}
        \red{1}&&\blue{1}\\
        &\blue{2}&
        \end{xsmallmatrix}} \mid
        &&\\&
        \mid {\begin{xsmallmatrix}
        2
        \end{xsmallmatrix}}
        &&
        {\begin{xsmallmatrix}
        1\\ \red{2}
        \end{xsmallmatrix}}
        &&
        {\begin{xsmallmatrix}
        \red{1}\\ \blue{2}
        \end{xsmallmatrix}}
        &&
        {\begin{xsmallmatrix}
        \blue{1}
        \end{xsmallmatrix}}\mid
        \Ar{1-1}{2-2}{}
        \Ar{1-3}{2-4}{}
        \Ar{1-5}{2-6}{}
        \Ar{1-7}{2-8}{}
        \Ar{2-2}{3-3}{}
        \Ar{2-4}{3-5}{}
        \Ar{2-6}{3-7}{}
        \Ar{2-8}{3-9}{}
        \Ar{3-1}{4-2}{}
        \Ar{3-3}{4-4}{}
        \Ar{3-5}{4-6}{}
        \Ar{3-7}{4-8}{}
        \Ar{4-2}{5-3}{}
        \Ar{4-4}{5-5}{}
        \Ar{4-6}{5-7}{}
        \Ar{5-3}{6-4}{}
        \Ar{5-5}{6-6}{}
        \Ar{5-7}{6-8}{}
        \Arop{1-3}{2-2}{}
        \Arop{1-5}{2-4}{}
        \Arop{1-7}{2-6}{}
        \Arop{1-9}{2-8}{}
        \Arop{2-2}{3-1}{}
        \Arop{2-4}{3-3}{}
        \Arop{2-6}{3-5}{}
        \Arop{2-8}{3-7}{}
        \Arop{3-3}{4-2}{}
        \Arop{3-5}{4-4}{}
        \Arop{3-7}{4-6}{}
        \Arop{3-9}{4-8}{}
        \Arop{4-4}{5-3}{}
        \Arop{4-6}{5-5}{}
        \Arop{4-8}{5-7}{}
        \Arop{5-3}{6-2}{}
        \Arop{5-5}{6-4}{}
        \Arop{5-7}{6-6}{}
        \tauAr{1-3}{1-1}{}
        \tauAr{1-5}{1-3}{}
        \tauAr{1-7}{1-5}{}
        \tauAr{1-9}{1-7}{}
        \tauAr{2-4}{2-2}{}
        \tauAr{2-6}{2-4}{}
        \tauAr{2-8}{2-6}{}
        \tauAr{3-3}{3-1}{}
        \tauAr{3-5}{3-3}{}
        \tauAr{3-7}{3-5}{}
        \tauAr{3-9}{3-7}{}
        \tauAr{4-4}{4-2}{}
        \tauAr{4-6}{4-4}{}
        \tauAr{4-8}{4-6}{}
        \tauAr{5-5}{5-3}{}
        \tauAr{5-7}{5-5}{}
        \tauAr{6-4}{6-2}{}
        \tauAr{6-6}{6-4}{}
        \tauAr{6-8}{6-6}{}
        \end{tikzcd}
        $$

    \end{ex}

    \leaveout{\begin{rema}
        The graded algebra in the above example is derived equivalent to the finite dimensional algebra
        \[\Lambda:=k[{\footnotesize 
        \begin{tikzcd}[column sep=13]
        1&2
        \Ar{1-1}{1-2}{shift left=1,"\alpha"}
        \Ar{1-2}{1-1}{shift left=1,"\beta"}
        \end{tikzcd}}]/(\beta\alpha).\]
        One can prove this by using left mutation of silting objects as defined in \cite[Def.~2.34]{AI}. In their notation, we have $\mu^+_2(A)=\begin{xsmallmatrix}
        &1&\\
        2&&\red{2}
        \end{xsmallmatrix}\oplus\begin{xsmallmatrix}
        1\\ \red{2}
        \end{xsmallmatrix}$ and its dg endomorphism algebra is quasi-isomorphic to $\Lambda$. We can also show the above derived equivalence directly by using mutations of dg quivers as defined in \cite[Section~3]{O17}.
    \end{rema}}

    \begin{ex}
        Let $A$ be the path dg algebra of 
        \[{\small
        \begin{tikzcd}
        1&2&3&4
        \Ar{1-1}{1-2}{"\alpha"}
        \Ar{1-2}{1-3}{"\beta"}
        \Ar{1-3}{1-4}{"\gamma"}
        \Ar{1-1}{1-3}{"h_1", bend left=45,red}
        \Ar{1-2}{1-4}{"h_2",bend right=45,red}
        \end{tikzcd}}
        \]
        where black arrows are of degree $0$ and red arrows of degree $-1$, $d(h_1)=\alpha\beta$ and $d(h_2)=\beta\gamma$. This is an honest dg algebra since $[\alpha h_2-h_1 \gamma]\in H^{-1}(A)$ is non-zero and
        \[H^0(A)=k [
        {\small
        \begin{tikzcd}[column sep=13]
        1&2&3&4
        \Ar{1-1}{1-2}{"\alpha"}
        \Ar{1-2}{1-3}{"\beta"}
        \Ar{1-3}{1-4}{"\gamma"}
        \end{tikzcd}}]/(\alpha\beta, \beta\gamma)\]
        $A$ is $d$-connected since $H^0(A)$ is connected and not semisimple.
        \leaveout{This dg algebra is derived equivalent to the finite dimensional algebra $\Lambda$, defined as the path algebra of the following quiver of type $D_4$:
        {\small
        \[
        \begin{tikzcd}
        & 1 & \\
        3 & 2 & 4
        \Ar{2-2}{2-1}{} 
        \Ar{2-2}{1-2}{} 
        \Ar{2-2}{2-3}{} 
        \end{tikzcd}
        \]}
        This derived equivalence can be checked by realising that $A$ as the dg endomorphism algebra of the silting object $(\mu^+_1 \circ \mu^+_1 \circ \mu^+_2 \circ \mu^+_3)(\Lambda)$ in $\fd(\Lambda)$.}
        Then, one can use the knitting algorithm and \cref{BT1} to determine that the Auslander--Reiten quiver of $\Hc{2}$ looks as follows. 
        
        \[
        \begin{tikzcd}[column sep =10, row sep =10]
            &
            \mid {\begin{smallmatrix}
            3\\4
            \end{smallmatrix}}
            &&
            {\begin{smallmatrix}
            \color{red}{4}
            \end{smallmatrix}}
            &&
            {\begin{smallmatrix}
            2
            \end{smallmatrix}}
            &&
            {\begin{smallmatrix}
            1\\
            \color{red}{3}\\
            \color{red}{4}
            \end{smallmatrix}} \mid
            \\
            \mid {\begin{smallmatrix}
            4
            \end{smallmatrix}}
            &&
            {\begin{smallmatrix}
            3
            \end{smallmatrix}}
            &
            \mid {\begin{smallmatrix}
            2\\3
            \end{smallmatrix}}
            &
            {\begin{smallmatrix}
            2\\
            \color{red}{4}
            \end{smallmatrix}}
            &
            {\begin{smallmatrix}
            \color{red}{3}\\
            \color{red}{4}
            \end{smallmatrix}}
            &
            {\begin{smallmatrix}
            1\\2\\
            \color{red}{3}\\
            \color{red}{4}
            \end{smallmatrix}}
            &
            {\begin{smallmatrix}
            1\\2
            \end{smallmatrix}}
            &
            {\begin{smallmatrix}
            1\\
            \color{red}{3}
            \end{smallmatrix}}
            &
            {\begin{smallmatrix}
            \color{red}{2}\\
            \color{red}{3}
            \end{smallmatrix}} \mid
            &
            {\begin{smallmatrix}
            \color{red}{2}
            \end{smallmatrix}} 
            &&
            {\begin{smallmatrix}
            \color{red}{1}
            \end{smallmatrix}} \mid 
            \\
            &&&&&
            \mid {\begin{smallmatrix}
            1\\
            2\\
            \color{red}{4}
            \end{smallmatrix}}
            &&
            {\begin{smallmatrix}
            \color{red}{3}
            \end{smallmatrix}}
            &&
            {\begin{smallmatrix}
            1
            \end{smallmatrix}}
            &&
            {\begin{smallmatrix}
            \color{red}{1}\\
            \color{red}{2}
            \end{smallmatrix}} \mid 
            \Ar{2-1}{1-2}{}
            \Ar{2-3}{1-4}{}
            \Ar{2-5}{1-6}{}
            \Ar{2-7}{1-8}{}
            \Ar{1-2}{2-3}{}
            \Ar{1-4}{2-5}{}
            \Ar{1-6}{2-7}{}
            \Ar{1-8}{2-9}{}
            \Ar{2-3}{2-4}{}
            \Ar{2-4}{2-5}{}
            \Ar{2-5}{2-6}{}
            \Ar{2-6}{2-7}{}
            \Ar{2-7}{2-8}{}
            \Ar{2-8}{2-9}{}
            \Ar{2-9}{2-10}{}
            \Ar{2-10}{2-11}{}
            \Ar{2-5}{3-6}{}
            \Ar{2-7}{3-8}{}
            \Ar{2-9}{3-10}{}
            \Ar{2-11}{3-12}{}
            \Ar{3-6}{2-7}{}
            \Ar{3-8}{2-9}{}
            \Ar{3-10}{2-11}{}
            \Ar{3-12}{2-13}{}
            \tauAr{1-4}{1-2}{}
            \tauAr{1-6}{1-4}{}
            \tauAr{1-8}{1-6}{}
            \tauAr{2-3}{2-1}{}
            \tauAr{2-5}{2-3}{bend right=40}
            \tauAr{2-7}{2-5}{bend right=40}
            \tauAr{2-9}{2-7}{bend right=40}
            \tauAr{2-11}{2-9}{bend right=40}
            \tauAr{2-13}{2-11}{}
            \tauAr{2-6}{2-4}{bend left=40}
            \tauAr{2-8}{2-6}{bend left=40}
            \tauAr{2-10}{2-8}{bend left=40}
            \tauAr{3-8}{3-6}{}
            \tauAr{3-10}{3-8}{}
            \tauAr{3-12}{3-10}{}
        \end{tikzcd}
        \]
    \end{ex}

    \begin{ex}\label{ExNot2Conn}
    Let $A$ be the graded path-algebra of 
    \[{\small
        \begin{tikzcd}
        1&2&3
        \Ar{1-1}{1-2}{"\alpha",red}
        \Ar{1-2}{1-3}{"\beta"}
        \end{tikzcd}}
        \]
    where $\alpha$ has degree $-1$. Then $A$ is not $2$-connected. The indecomposable projective objects and their truncated shifts are $
    \begin{smallmatrix}
    1\\ \red{2}\\ \red{3}
    \end{smallmatrix},
    \begin{smallmatrix}
    2\\3
    \end{smallmatrix},
    \begin{smallmatrix}
    3\\
    \end{smallmatrix},
    \begin{smallmatrix}
    \red{1}
    \end{smallmatrix},
    \begin{smallmatrix}
    \red{2}\\ \red{3}
    \end{smallmatrix},
    \begin{smallmatrix}
    \red{3}
    \end{smallmatrix}
    $. Then there is no morphism starting or ending in $\begin{smallmatrix}\red{1}\end{smallmatrix}$. 

    The Auslander--Reiten quiver of $\Hc{d}$ is as follows:
    $$
    \begin{tikzcd}[column sep =10, row sep =10]
    &\mid
    {\begin{smallmatrix}
    2\\3
    \end{smallmatrix}}
    &&
    {\begin{smallmatrix}
    \red{3}
    \end{smallmatrix}}
    &&
    {\begin{smallmatrix}
    \red{2}
    \end{smallmatrix}}
    &&
    {\begin{smallmatrix}
    1
    \end{smallmatrix}}
    &&
    {\begin{smallmatrix}
    \red{1}
    \end{smallmatrix}}
    \mid\\\mid
    {\begin{smallmatrix}
    3
    \end{smallmatrix}}
    &&
    {\begin{smallmatrix}
    2
    \end{smallmatrix}}
    &&
    {\begin{smallmatrix}
    \red{2}\\ \red{3}
    \end{smallmatrix}}
    &&
    {\begin{smallmatrix}
    1\\ \red{2}
    \end{smallmatrix}}
    \mid\\
    &&&&&\mid
    {\begin{smallmatrix}
    1\\ \red{2} \\ \red{3}
    \end{smallmatrix}}\mid
    \Ar{2-1}{1-2}{}
    \Ar{2-3}{1-4}{}
    \Ar{2-5}{1-6}{}
    \Ar{2-7}{1-8}{}
    \Ar{1-2}{2-3}{}
    \Ar{1-4}{2-5}{}
    \Ar{1-6}{2-7}{}
    \Ar{2-5}{3-6}{}
    \Ar{3-6}{2-7}{}
    \tauAr{2-3}{2-1}{}
    \tauAr{2-5}{2-3}{}
    \tauAr{2-7}{2-5}{}
    \tauAr{1-4}{1-2}{}
    \tauAr{1-6}{1-4}{}
    \tauAr{1-8}{1-6}{}
    \tauAr{1-10}{1-8}{}
    \end{tikzcd}
    $$
    This Auslander--Reiten quiver is not connected.
    \end{ex}

    \begin{ex}\label{ExampleH0NotConn}
    Let $A$ be the graded path algebra modulo relations of
    \[{\small
    \begin{tikzcd}
    &1&\\
    2&&3
    \Ar{2-1}{1-2}{"\alpha",red}
    \Ar{1-2}{2-3}{"\beta",red}
    \Ar{2-3}{2-1}{"\gamma",red}
    \end{tikzcd}}
    \]
    where the degrees of $\alpha, \beta,\gamma$ are $-1$ and the relations are $\alpha\beta\gamma, \beta\gamma\alpha, \gamma\alpha\beta$. Then $A$ is $3$-connected but $H^0(A)$ is not connected. Since $A\in\Hc{3}$, the category $\Hc{3}$ has Auslander--Reiten sequences. One can easily verify that $A$ is a $2$-self-injective dg algebra (i.e.\ $\Hc{3}$ is a Frobenius extriangulated category).

    We determine the Auslander--Reiten quiver of $\Hc{3}$. At first, we have the following Auslander-Reiten sequence by Proposition \ref{proj inj}:
    $$\begin{smallmatrix}
    \red{2}\\ \blue{3}
    \end{smallmatrix}\to
    \begin{smallmatrix}
    1\\ \red{2} \\ \blue{3}
    \end{smallmatrix}\oplus
    \begin{smallmatrix}
    \red{2}
    \end{smallmatrix}\to
    \begin{smallmatrix}
    1\\ \red{2}
    \end{smallmatrix}\dashrightarrow$$
    Next, we determine $\tau(\begin{smallmatrix}\red{2}\end{smallmatrix})$. 
    We have a following $4$-term projective resolution:
    $$
    \begin{smallmatrix}
    2
    \end{smallmatrix}\to
    0\to
    \begin{smallmatrix}
    \red{2}
    \end{smallmatrix}\dashrightarrow, \ 
    \begin{smallmatrix}
    \red{3}\\ \blue{1}
    \end{smallmatrix}\to
    \begin{smallmatrix}
    2\\ \red{3} \\ \blue{1}
    \end{smallmatrix}\to
    \begin{smallmatrix}
    2
    \end{smallmatrix}\dashrightarrow, \ 
    \begin{smallmatrix}
    3 \\ \red{1}
    \end{smallmatrix}\to
    0\to
    \begin{smallmatrix}
    \red{3} \\ \blue{1}
    \end{smallmatrix}\dashrightarrow, \ 
    \begin{smallmatrix}
    \blue{2}
    \end{smallmatrix}\to
    \begin{smallmatrix}
    3\\ \red{1} \\ \blue{2}
    \end{smallmatrix}\to
    \begin{smallmatrix}
    3 \\ \red{1}
    \end{smallmatrix}\dashrightarrow
    $$
    This is illustrated as follows:
    $$
    \begin{tikzcd}[column sep = 8, row sep = 5]
    &&&
    {\begin{smallmatrix}
    \red{3}\\ \blue{1} 
    \end{smallmatrix}}
    &&&\\
    {\begin{smallmatrix}
    3\\ \red{1}\\ \blue{2}
    \end{smallmatrix}}
    &&
    0
    &&
    {\begin{smallmatrix}
    2\\ \red{3} \\ \blue{1}
    \end{smallmatrix}}
    &&
    0
    &&
    {\begin{smallmatrix}
    \red{2}
    \end{smallmatrix}}\\
    &
    {\begin{smallmatrix}
    3\\ \red{1}
    \end{smallmatrix}}
    &&&&
    {\begin{smallmatrix}
    2
    \end{smallmatrix}}
    \Ar{2-1}{2-3}{}
    \Ar{2-3}{2-5}{}
    \Ar{2-5}{2-7}{}
    \Ar{2-7}{2-9}{}
    \Ar{2-3}{1-4}{}
    \Ar{1-4}{2-5}{}
    \Ar{2-1}{3-2}{}
    \Ar{3-2}{2-3}{}
    \Ar{2-5}{3-6}{}
    \Ar{3-6}{2-7}{}
    \end{tikzcd}
    $$
    By applying $\nu_3$ to this diagram we have a sequence 
    $
    \begin{smallmatrix}
    1\\ \red{2} \\ \blue{3}
    \end{smallmatrix}\to
    0\to
    \begin{smallmatrix}
    3\\ \red{1} \\ \blue{2}
    \end{smallmatrix}\to
    0
    $
    and 
    $$\Sigma^1=\begin{smallmatrix}
    3\\ \red{1}\\ \blue{2}
    \end{smallmatrix}, \ \Sigma^2=\begin{smallmatrix}
    1\\ \red{2}
    \end{smallmatrix}, \ \tau(\begin{smallmatrix}
    \red{2}
    \end{smallmatrix})=\Sigma^3=\begin{smallmatrix}
    \blue{3}
    \end{smallmatrix}$$
    Thus we can conclude that the following is an Auslander--Reiten sequence:
    $$\begin{smallmatrix}
    \blue{3}
    \end{smallmatrix}\to
    \begin{smallmatrix}
    \red{2}\\ \blue{3}
    \end{smallmatrix}\to
    \begin{smallmatrix}
    \red{2}
    \end{smallmatrix}\dashrightarrow.$$
    The other Auslander--Reiten sequences can be calculated by similar arguments and knitting. The Auslander--Reiten quiver looks as follows:
    $$
    \begin{tikzcd}[column sep =8, row sep =8]
    &&&&&&&&\mid 
    {\begin{smallmatrix}
    1\\ \red{2} \\ \blue{3}
    \end{smallmatrix}}
    \mid &&&&&&&\\
    \cdots
    &
    {\begin{smallmatrix}
    \blue{1}
    \end{smallmatrix}}
    &&
    {\begin{smallmatrix}
    \red{3}
    \end{smallmatrix}}
    &&
    {\begin{smallmatrix}
    2
    \end{smallmatrix}}
    &&
    {\begin{smallmatrix}
    \red{2} \\ \blue{3}
    \end{smallmatrix}}
    &&
    {\begin{smallmatrix}
    1 \\ \red{2}
    \end{smallmatrix}}
    &&
    {\begin{smallmatrix}
    \blue{2}
    \end{smallmatrix}}
    &&
    {\begin{smallmatrix}
    \red{1}
    \end{smallmatrix}}
    &&
    {\begin{smallmatrix}
    3
    \end{smallmatrix}}
    \\
    {\begin{smallmatrix}
    3
    \end{smallmatrix}}
    &&
    {\begin{smallmatrix}
    \red{3} \\ \blue{1}
    \end{smallmatrix}}
    &&
    {\begin{smallmatrix}
    2 \\ \red{3}
    \end{smallmatrix}}
    &&
    {\begin{smallmatrix}
    \blue{3}
    \end{smallmatrix}}
    &&
    {\begin{smallmatrix}
    \red{2}
    \end{smallmatrix}}
    &&
    {\begin{smallmatrix}
    1
    \end{smallmatrix}}
    &&
    {\begin{smallmatrix}
    \red{1} \\ \blue{2}
    \end{smallmatrix}}
    &&
    {\begin{smallmatrix}
    3 \\ \red{1}
    \end{smallmatrix}}
    &
    \cdots\\
    &&&\mid
    {\begin{smallmatrix}
    2 \\ \red{3} \\ \blue{1} 
    \end{smallmatrix}}
    \mid&&&&&&&&&&\mid
    {\begin{smallmatrix}
    3 \\ \red{1} \\ \blue{2}
    \end{smallmatrix}}
    \mid
    \Ar{3-1}{2-2}{}
    \Ar{3-3}{2-4}{}
    \Ar{3-5}{2-6}{}
    \Ar{3-7}{2-8}{}
    \Ar{3-9}{2-10}{}
    \Ar{3-11}{2-12}{}
    \Ar{3-13}{2-14}{}
    \Ar{3-15}{2-16}{}
    \Ar{2-2}{3-3}{}
    \Ar{2-4}{3-5}{}
    \Ar{2-6}{3-7}{}
    \Ar{2-8}{3-9}{}
    \Ar{2-10}{3-11}{}
    \Ar{2-12}{3-13}{}
    \Ar{2-14}{3-15}{}
    \Ar{2-8}{1-9}{}
    \Ar{1-9}{2-10}{}
    \Ar{3-3}{4-4}{}
    \Ar{4-4}{3-5}{}
    \Ar{3-13}{4-14}{}
    \Ar{4-14}{3-15}{}
    \tauAr{2-4}{2-2}{}
    \tauAr{2-6}{2-4}{}
    \tauAr{2-8}{2-6}{}
    \tauAr{2-10}{2-8}{}
    \tauAr{2-12}{2-10}{}
    \tauAr{2-14}{2-12}{}
    \tauAr{2-16}{2-14}{}
    \tauAr{3-3}{3-1}{}
    \tauAr{3-5}{3-3}{}
    \tauAr{3-7}{3-5}{}
    \tauAr{3-9}{3-7}{}
    \tauAr{3-11}{3-9}{}
    \tauAr{3-13}{3-11}{}
    \tauAr{3-15}{3-13}{}
    \end{tikzcd}
    $$
    
    \end{ex}
	
    \section{Final Remarks}\label{FinalSection}
    We finish the discussion with the following observation, which highlights again the analogy between $K^{[-1,0]}(\mathrm{proj}(\Lambda))$ and $\per(A)^{[-d,0]}$. In this final section we only assume that $A$ is proper, connective and $d\geq 1$. As $\per(A)^{[-d,0]}\subseteq \fd(A)$ is extension-closed, it has the induced structure of an extriangulated category. Recall from \cite[Def.~3.3]{Chen23d} that an extriangulated category is called \definef{$d$-Auslander}, if it has enough projectives, enough injectives, positive global dimension at most $d+1$ and dominant dimension at least $d+1$, see \cite[Def.~3.1,3.2]{Chen23d} for the definitions.
	
    \begin{lemma}
		The extriangulated category $\per(A)^{[-d,0]}$ is $(d-1)$-Auslander.
    \end{lemma}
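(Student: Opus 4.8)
The plan is to verify directly the four conditions defining a $(d-1)$-Auslander extriangulated category in the sense of \cite[Def.~3.1--3.3]{Chen23d}: enough projectives, enough injectives, positive global dimension at most $d$, and dominant dimension at least $d$. The two structural inputs I will use are the $\ast$-decomposition $\per(A)^{[-d,0]}=\add(A)\ast\add(A)[1]\ast\cdots\ast\add(A)[d]$, which I rewrite as $\add(A)\ast\bigl(\per(A)^{[-(d-1),0]}[1]\bigr)$ and as $\per(A)^{[-(d-1),0]}\ast\add(A)[d]$, and the connectivity of $A$, which forces $\Hom_A(A[i],A[j])=H^{j-i}(A)=0$ whenever $i<j$.

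First I would identify the projectives and injectives. Connectivity gives $\EE(A,A[k])=\Hom_A(A,A[k+1])=H^{k+1}(A)=0$ and $\EE(A[k],A[d])=\Hom_A(A,A[d+1-k])=H^{d+1-k}(A)=0$ for all $0\le k\le d$; since every object of $\per(A)^{[-d,0]}$ is an iterated extension of objects in $\add(A[0])\cup\cdots\cup\add(A[d])$ and $\EE$ is half-exact in each variable, it follows that $\add(A)\subseteq\proj(\per(A)^{[-d,0]})$ and $\add(A[d])\subseteq\inj(\per(A)^{[-d,0]})$. For enough projectives and the global-dimension bound I would argue as in the proof of \cref{d-res-lemma}: for $X\in\per(A)^{[-d,0]}=\add(A)\ast\bigl(\per(A)^{[-(d-1),0]}[1]\bigr)$, rotating the defining triangle $P^0\to X\to Y\to P^0[1]$ into $Y[-1]\to P^0\to X\to Y$ yields a conflation $\Omega X\to P^0\to X\dashrightarrow$ with $P^0\in\add(A)$ and $\Omega X:=Y[-1]\in\per(A)^{[-(d-1),0]}$; iterating $d$ times produces a projective resolution $0\to\Omega^d X\to P^{d-1}\to\cdots\to P^0\to X\to 0$ with $\Omega^d X\in\per(A)^{[0,0]}=\add(A)$. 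Hence every object has projective dimension at most $d$ and the positive global dimension is at most $d=(d-1)+1$; in particular there are enough projectives, and since a projective is a direct summand of every object that deflates onto it, $\proj(\per(A)^{[-d,0]})=\add(A)$. Dually, for $X\in\per(A)^{[-(d-1),0]}\ast\add(A)[d]$, rotating $Z\to X\to Q[d]\to Z[1]$ into $X\to Q[d]\to Z[1]\to X[1]$ gives a conflation $X\to Q[d]\to Z[1]\dashrightarrow$ with $Q[d]\in\add(A[d])$, so there are enough injectives and $\inj(\per(A)^{[-d,0]})=\add(A[d])$.

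The remaining and most delicate step is the lower bound on the dominant dimension, which amounts to computing the first $d$ terms of the minimal injective coresolution of the projective $A$ (equivalently, of each indecomposable projective). Here connectivity enters decisively: for $0\le k\le d-1$ one has $\Hom_A(A[k],A[d])=H^{d-k}(A)=0$, hence every morphism from $A[k]$ to an injective object is zero and the injective envelope of $A[k]$ is the zero object; moreover $A[k+1]\in\per(A)^{[-d,0]}$ for such $k$, so the triangle $A[k]\to 0\to A[k+1]\to A[k][1]$ is a conflation exhibiting $A[k+1]$ as the cosyzygy of $A[k]$. Iterating, the minimal injective coresolution of $A$ has the form $0\to A\to 0\to\cdots\to 0\to A[d]\to 0$, whose first $d$ terms $I^0=\cdots=I^{d-1}=0$ are projective-injective; therefore $A$, and hence $\per(A)^{[-d,0]}$, has dominant dimension at least $d$. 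Assembling the four properties proves the lemma. I expect this last step to be the main obstacle: one has to notice the (at first counterintuitive) fact that the injective envelopes of the shifts $A[k]$ with $0\le k<d$ are zero, which is exactly where the hypothesis $H^{>0}(A)=0$ is used.
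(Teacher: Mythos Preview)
Your proof is correct and follows essentially the same approach as the paper: you identify $\add(A)$ and $\add(A[d])$ as the projectives and injectives via the $\ast$-decomposition and connectivity, bound the global dimension by~$d$, and then compute the minimal injective coresolution of $A$ to be $0\to A\to 0\to\cdots\to 0\to A[d]\to 0$ exactly as the paper does. The only cosmetic difference is that you bound the global dimension by explicitly building a length-$d$ projective resolution, whereas the paper invokes \cref{ProjDimd} for the corresponding $\Ext$-vanishing; these are two sides of the same coin.
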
 
	
    \begin{rema}
		In this sense, \cref{d-term-Equiv} can be viewed as a special case of a ``$(d-1)$-analogue'' of \cite[Prop.~3.11, Sec.~3.3.3]{GNP23}.
    \end{rema}
	
    \begin{proof}
		By the same argument as in \cref{EnoughPI}, we know that $\per(A)^{[-d,0]}\subseteq \fd(A)^{\leq 0}$ has enough projectives given by $\add(A)$. We want to argue it has enough injectives as well. We observe that $\add(A[d])\subseteq\inj(\per(A)^{[-d,0]})$ by \cref{ProjDimd}. Moreover, for every $P\in\per(A)^{[-d,0]}$ there exists by definition a triangle
		\[P'\to P \to P^{-d}[d]\to P'[1]\]
		in $\fd(A)$ such that $P'[1]\in\per(A)^{[-d,0]}$ and $P^{-d}\in\add(A)$. Hence, there exists an inflation $P\to P^{-d}[d]$, which proves there are enough injectives. As in \cref{EnoughPI} one proves $\inj(\per(A)^{[-d,0]})=\add(A[d])$.
		
		It follows from \cref{ProjDimd} that $\per(A)^{[-d,0]}$ has positive global dimension at most $d$. It remains to check that the dominant dimension is at least $d$. Let $P\in\proj(\per(A)^{[-d,0]})=\add(A)$. Then a minimal injective coresolution is given by the following sequence of conflations
		\[P\rightarrowtail 0 \twoheadrightarrow P[1]\dashrightarrow; \; P[1]\rightarrowtail 0 \twoheadrightarrow P[2]\dashrightarrow;\; \cdots;\; P[d-1]\rightarrowtail 0 \twoheadrightarrow P[d]\dashrightarrow;\; P[d]\rightarrowtail P[d] \twoheadrightarrow 0\dashrightarrow,\]
		where $0$ is projective-injective, and $P[d]$ is only injective but not projective if $P\neq 0$. It follows that the dominant dimension of $\per(A)^{[-d,0]}$ is at least $d$ and equal to $d$ if $A\not\simeq 0$.
    \end{proof}

    \subsection*{Acknowledgment}
    The first-named author would like to thank Arashi Sakai for the seminar that had a positive influence on this paper. The author is also grateful to Riku Fushimi and Ryu Tomonaga for valuable discussions, and to Prof. Hiroyuki Nakaoka and Prof. Osamu Iyama for several important comments.

    \subsection*{Financial support}
    The second-named author's research was partly supported by the Swedish Research Council (Vetenskapsrådet) Research Project Grant 2022-03748 `Higher structures in higher-dimensional homological algebra.'
	
    \printbibliography
	
\end{document}